\newtheorem{thm}{Theorem}[section]
\newtheorem{cor}[thm]{Corollary}
\newtheorem{prop}[thm]{Proposition}
\newtheorem{lem}[thm]{Lemma}
\theoremstyle{definition}
\newtheorem{defn}[thm]{Definition}
\theoremstyle{remark}
\DeclareMathOperator{\Aut}{Aut}
    \newtheoremstyle{TheoremNum}
        {7pt}{7pt}              
        {\itshape}                      
        {}                              
        {\bfseries}                     
        {.}                             
        { }                             
        {\thmname{#1}\thmnote{ \bfseries #3}}
    \theoremstyle{TheoremNum}
    \newtheorem{thmn}{Theorem}
\begin{document}

\title{Geometry of Schreieder's varieties and some elliptic and K3 moduli curves}
\author{Laure Flapan}
\address{Department of Mathematics, Massachusetts Institute of Technology, Cambridge, MA 02139 USA}
\email{lflapan@mit.edu}
\subjclass[2010]{14J10, 14J27, 14J28, 14J32}
\keywords{elliptic modular surface, K3 surface, K3 fibration, Calabi-Yau families}

\begin{abstract}
We study the geometry of a class of $n$-dimensional smooth projective varieties constructed by Schreieder for their noteworthy Hodge-theoretic properties. In particular, we realize Schreieder's surfaces as elliptic modular surfaces and Schreieder's threefolds as one-dimensional families of Picard rank $19$ $K3$ surfaces. 
\end{abstract}
\maketitle

\section{Introduction}
If $X$ is a smooth complex projective variety of dimension $n$, then for any $k\le 2n$, the singular cohomology of $X$ comes equipped with a decomposition into complex subspaces given by $H^{k}(X,\mathbb{Q})\otimes \mathbb{C}\cong \bigoplus_{p+q=k} H^{p,q}(X)$, where $H^{p,q}(X)\cong H^q(X,\Omega_X^p)$. In \cite{schreieder}, Schreieder considered the question of whether any set of Hodge numbers $h^{p,q}\coloneqq \dim H^{p,q}(X)$, subject to the necessary symmetries imposed by the Hard Lefschetz Theorem, can be realized by a smooth complex projective variety. 

To this end, Schreieder \cite[Section 8]{schreieder}  constructed, among others, $n$-dimensional smooth projective varieties $X_{c,n}$, depending on a parameter $c\ge 1$, with particularly pathological Hodge numbers in that $X_{c,n}$ has positive $h^{n,0}=h^{0,n}$ and all other $h^{p,q}=0$ for $p\ne q$. The $X_{c,n}$ are smooth models of a quotient $C_g^n/G$, for $C_g$ a genus $g=\frac{3^c-1}{2}$ hyperelliptic curve and $G$ a finite group. 

Schreieder's construction generalized a construction of Cynk and Hulek in \cite[Section 3]{cynk} in the $c=1$ case, who proved that the $X_{1,n}$ are Calabi-Yau. From Cynk and Hulek's inductive construction, it follows that the $X_{1,n}$ may be realized as families of Calabi-Yau varieties over $\mathbb{P}^1$. Additionally, they proved that these Calabi-Yau varieties are modular, a result which was generalized to all of Schreieder's varieties in \cite[Corollary 3.8]{FlapanLang}.

In addition to having these noteworthy Hodge-theoretic and arithmetic properties, the varieties $X_{c,n}$ are also special from a cycle-theoretic point of view. Laterveer and Vial recently showed in \cite{LV} that the subring of the Chow ring of $X_{c,n}$ generated by divisors, Chern classes, and intersections of two positive-dimensional cycles injects into cohomology via the cycle class map. Moreover they show that in the surface case, the small diagonal of $Z_{c,2}$ admits a decomposition similar to that of K3 surfaces proved by Beauville-Voisin \cite{BV}. 

In this paper, we investigate in detail the geometry of the varieties $X_{c,n}$ for $c>1$. We generalize Cynk and Hulek's result for the $c=1$ case by showing in Proposition \ref{fibration} that for any $c\ge 2$, although $X_{c,n}$ has Kodaira dimension $1$ instead of $0$,   the Iitaka fibration of $X_{c,n}$ equips the variety $Z_{c,n}$, birational to $X_{c,n}$, with a fibration over $\mathbb{P}^1$ by hypersurfaces of Kodaira dimension $0$. 

In particular, in the cases of dimension $n=2$ and $n=3$, we obtain a moduli-theoretic interpretation of Schreieder's varieties via the following two main results. 
\begin{thmn}[\ref{elmod}] 
For $c\ge 2$, the minimal model $Z_{c,2}$ of $X_{c,2}$ is the elliptic modular surface attached to an explicit non-congruence subgroup $\Gamma_c\subset SL(2,\mathbb{Z})$.
\end{thmn}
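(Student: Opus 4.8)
The plan is to use the Iitaka fibration supplied by Proposition \ref{fibration} to present $Z_{c,2}$ as a relatively minimal elliptic surface with a section, and then to match its functional and homological invariants with those of an elliptic modular surface. Proposition \ref{fibration} gives a fibration over $\mathbb{P}^1$ whose general fibre is a smooth curve of Kodaira dimension $0$, hence a genus-one curve; contracting any $(-1)$-curves contained in fibres I may assume the fibration $f\colon Z_{c,2}\to\mathbb{P}^1$ is relatively minimal. Since Schreieder's Hodge-number computation gives $q(Z_{c,2})=h^{1,0}=0$, the base has genus $0$, consistent with an identification $\mathbb{P}^1\cong\overline{\Gamma_c\backslash\mathbb{H}}$. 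I would first exhibit an honest section: the presentation $C_g^2/G$ carries distinguished $G$-invariant curves (images of the diagonal and of graphs of automorphisms of $C_g$, together with branch and exceptional loci of the resolution), and the aim is to extract from these a genuine zero section, so that $f$ is an elliptic surface in Kodaira's sense rather than merely a genus-one fibration.

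The computational heart of the argument is the determination of the homological and functional invariants of $f$. Working from the explicit hyperelliptic model of $C_g$ (the curve $y^2=x^{3^c}-1$, with its order-$3^c$ automorphism and hyperelliptic involution) and the explicit group $G$, I would compute the variation of Hodge structure carried by $R^1f_*\mathbb{Z}$ over the complement $\mathbb{P}^1\setminus\Sigma$ of the finite set $\Sigma$ of critical values. Concretely this means locating the singular fibres, reading off their Kodaira types from the local behaviour of the family of elliptic curves, computing the $j$-map, and determining each local monodromy transformation in $SL(2,\mathbb{Z})$. The resulting monodromy representation $\pi_1(\mathbb{P}^1\setminus\Sigma)\to SL(2,\mathbb{Z})$ then has image a finite-index subgroup, from which I would read off the explicit group $\Gamma_c$, recording its index, its cusps (matching the singular fibres of $f$), and its elliptic points.

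With $\Gamma_c$ and the $j$-map in hand I would carry out two tasks. First, I would identify $Z_{c,2}$ with the elliptic modular surface $S_{\Gamma_c}$: by Kodaira's theory a relatively minimal elliptic surface with a section is determined up to isomorphism by its functional invariant (the $j$-map) and its homological invariant (the monodromy), and $S_{\Gamma_c}$ is, by Shioda's construction, precisely the relatively minimal elliptic surface with section over $\overline{\Gamma_c\backslash\mathbb{H}}$ whose $j$-map is the canonical one and whose monodromy is the inclusion $\Gamma_c\hookrightarrow SL(2,\mathbb{Z})$. Thus I would check that the period map identifies the base of $f$ with $\overline{\Gamma_c\backslash\mathbb{H}}$ compatibly with the $j$-maps, so that the two pairs of invariants coincide; as a consistency check I would verify via the Shioda–Tate formula and Schreieder's known Hodge and Picard numbers that the Mordell–Weil rank of $f$ vanishes, as it must for a modular surface. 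Second, I would prove that $\Gamma_c$ is non-congruence: having computed its index, genus, and cusp-width data, I would compare these numerical invariants against the finite list of congruence subgroups of the same index, or exhibit a direct obstruction such as a cusp width incompatible with any congruence level.

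The step I expect to be the main obstacle is the explicit monodromy computation: passing from the quotient presentation $C_g^2/G$ and its resolution to a usable Weierstrass-type family over $\mathbb{P}^1$, correctly identifying the Kodaira type and hence the local monodromy at every point of $\Sigma$, and assembling these into a clean description of $\Gamma_c$. A secondary difficulty is proving non-congruence rigorously rather than heuristically, since this rests on the precise arithmetic of the cusp data, and exhibiting a section rather than merely a multisection may require further analysis of the $G$-fixed loci.
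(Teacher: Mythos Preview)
Your overall strategy---exhibit $Z_{c,2}$ as a relatively minimal elliptic surface with section over $\mathbb{P}^1$, determine the singular fibres and monodromy, then invoke the fact that such a surface is determined by its functional and homological invariants---is sound and is essentially the route the paper takes. Two points of comparison are worth noting.

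First, the section does not come from the diagonal or from graphs of automorphisms; those do not yield sections here. In the paper the sections arise from the minimal resolution of the four \emph{Type~II} cyclic quotient singularities of $C_g^2/G$ (the images of the points $(P_i,Q)$ and $(Q,P_i)$): each resolves to a chain $T\cup S$ with $T^2=-2$ and $S^2=-(g+1)$, and it is precisely the $S$-curves that give honest sections. This is verified by an explicit local computation of how the strict transform of a fibre $F_t=\{x_1x_2=t\}$ meets the exceptional locus.

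Second, the paper replaces your direct Kodaira-matching by a cleaner criterion due to Nori: an extremal elliptic surface with a section, non-constant $j$-invariant, and no fibres of type $II^*$ or $III^*$ is automatically an elliptic modular surface. Thus the paper first classifies the singular fibres ($I_{4\cdot3^c}$ over $0$, $I^*_{3^c}$ over $\infty$, and $3^c$ fibres of type $I_1$), uses Shioda--Tate together with the known Hodge numbers to prove extremality, and then applies Nori's theorem; the group $\Gamma_c$ is read off afterwards from the local monodromies dictated by the fibre types. Your proposed route of computing the monodromy first and then matching against Shioda's construction would also work, but you would additionally need to check directly that the image of $\rho$ does not contain $-\mathrm{Id}$, which Nori's criterion handles for free. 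The non-congruence argument in the paper is exactly what you suggest: comparison with the finite list of genus~$0$ congruence subgroups. (Minor slip: the curve is $y^2=x^{3^c}+1$, not $y^2=x^{3^c}-1$.)
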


\begin{thmn}[\ref{main theorem}] 
For $c\ge 2$, the general smooth fibers of the Iitaka fibration $Z_{c,3}\rightarrow \mathbb{P}^1$ associated to $X_{c,3}$ are K3 surfaces of Picard rank $19$. 
\end{thmn}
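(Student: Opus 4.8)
The plan is to identify the general fiber via the Enriques--Kodaira classification and then read off its Picard number from its transcendental lattice.

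By Proposition~\ref{fibration}, the general fiber $S$ of $Z_{c,3}\to\mathbb{P}^1$ is a smooth projective surface of Kodaira dimension $0$, so its minimal model is an abelian, bielliptic, Enriques, or $K3$ surface by the Enriques--Kodaira classification. Since $q$ and $p_g$ are birational invariants, the pair $(q(S),p_g(S))=(h^{1,0}(S),h^{2,0}(S))$, equal to $(2,1)$, $(1,0)$, $(0,0)$, $(0,1)$ in the four cases, determines the type; it therefore suffices to show $(q(S),p_g(S))=(0,1)$, together with the check (using that the fibration is relatively minimal) that $S$ carries no $(-1)$-curves, so that $S$ is itself $K3$.

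To compute $(q,p_g)$ I would use that $Z_{c,3}\to\mathbb{P}^1$ is induced by a projection of $C_g^3$ onto one factor followed by the degree-two quotient $C_g\to\mathbb{P}^1$, so that $S$ is a smooth model of a quotient $C_g^2/H$ for a subgroup $H\le G$ acting on the remaining two factors. Because $H^{1,0}(C_g^2)\cong H^{1,0}(C_g)^{\oplus 2}$ and $H^{2,0}(C_g^2)\cong H^{1,0}(C_g)\otimes H^{1,0}(C_g)$, the invariants $q(S)$ and $p_g(S)$ equal the dimensions of the $H$-invariant subspaces of these two spaces. Running Schreieder's eigenspace analysis of the automorphism action on $H^{1,0}(C_g)$ — the same computation that forces all off-diagonal Hodge numbers of $X_{c,n}$ to vanish while $h^{n,0}=1$ — restricted to the surface $C_g^2/H$ should give exactly $q(S)=0$ and $p_g(S)=1$, so that $S$ is a $K3$ surface.

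For the Picard number I would analyze the transcendental lattice, using $\rho(S)=22-\operatorname{rank}T(S)$. Decomposing $H^2(C_g^2)\cong H^0\otimes H^2\,\oplus\,H^1(C_g)\otimes H^1(C_g)\,\oplus\,H^2\otimes H^0$, the first and last summands are algebraic, so the transcendental Hodge structure of $S$ is the $H$-invariant sub-Hodge-structure of $H^1(C_g)\otimes H^1(C_g)$ containing the holomorphic $2$-form. The conceptual point — and the cleanest way to evaluate its rank — is that the resulting weight-$2$ variation of Hodge structure over $\mathbb{P}^1$ should be the symmetric square of the weight-$1$ variation $R^1\pi_*\mathbb{Q}$ attached to the elliptic modular surface $Z_{c,2}$ of Theorem~\ref{elmod} over the same base; the symmetric square of a rank-$2$ local system has rank $3$, and $\operatorname{Sym}^2$ of the symplectic form has signature $(2,1)$, precisely matching the transcendental lattice of a $K3$ surface with $\rho=19$. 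This gives $\operatorname{rank}T(S)=3$ and hence $\rho(S)=19$ for the general fiber.

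The step I expect to be the main obstacle is making this rank computation exact rather than merely a pair of bounds. Showing $\rho(S)\ge 19$ requires exhibiting $19$ independent algebraic classes on $S$ — naturally, the $(-2)$-curves resolving the singularities of $C_g^2/H$ together with the images of the coordinate and diagonal curves of $C_g^2$ — while showing $\rho(S)\le 19$ requires that the transcendental $(1,1)$-part be exactly one-dimensional, equivalently that the general fiber is not one of the countably many members where the $\operatorname{Sym}^2$ variation degenerates to a rank-$2$ (singular, $\rho=20$) Hodge structure at a complex-multiplication point. Establishing the $\operatorname{Sym}^2$ identification precisely, and thereby controlling the transcendental $(1,1)$-class, is the technical heart of the argument.
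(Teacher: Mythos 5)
There is a genuine gap, and it occurs at the very first structural step. You assert that the Iitaka fibration $Z_{c,3}\to\mathbb{P}^1$ is ``induced by a projection of $C_g^3$ onto one factor followed by the degree-two quotient $C_g\to\mathbb{P}^1$,'' so that the general fiber is a smooth model of a quotient $C_g^2/H$. This is not what the fibration is. By the computation in Proposition~\ref{fibration}, the $G$-invariant pluricanonical sections are the $s_a=x_1^a\omega_1\times x_2^a\omega_2\times x_3^a\omega_3$, so the Iitaka map is given by $t=x_1x_2x_3$ and the general fiber is a resolution of the quotient $\{x_1x_2x_3=t\}/G$, where $\{x_1x_2x_3=t\}$ is a hypersurface of general type in $C_g^3$ (generically a double cover of $C_g\times C_g$, not a quotient of it). Everything downstream of your misidentification fails with it: the computation of $(q,p_g)$ as $H$-invariants of $H^{1,0}(C_g)^{\oplus 2}$ and $H^{1,0}(C_g)^{\otimes 2}$ does not apply to this fiber, and the claim that the transcendental variation of Hodge structure is $\operatorname{Sym}^2$ of $R^1\pi_*\mathbb{Q}$ for the elliptic modular surface $Z_{c,2}$ is asserted rather than derived and has no foundation once the fiber is not a quotient of $C_g^2$. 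You also flag yourself that the exact rank computation --- i.e.\ producing $19$ independent algebraic classes --- is ``the main obstacle''; but that lower bound is precisely the content of the theorem, so the proposal is incomplete even on its own terms.

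For comparison, the paper's proof is essentially the reverse of your strategy: it makes no attempt to compute the transcendental lattice directly. It exhibits an explicit configuration of $21$ curves on a general smooth fiber $\mathcal{F}_t$ --- eighteen rational $(-2)$-curves coming from the intersections of $\mathcal{F}_t$ with the resolutions of the loci $C_g\times P_i\times Q$, $Q\times P_i\times Q$ and their permutations (using the adjunction formula and the torsion canonical bundle of $\mathcal{F}_t$ to see they are $(-2)$-curves), together with three elliptic curves cut out by the coordinate hypersurface classes $[\mathrm{pt}\times C_g\times C_g]$ etc. --- and checks that their intersection matrix has rank $19$. The bound $\rho\le 19$ for the general fiber is then automatic because the family is non-isotrivial, and K3-ness follows from Kodaira dimension $0$ together with $\rho\ge 16$ via the Enriques--Kodaira classification (abelian, bielliptic and Enriques surfaces having too small a Picard number). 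If you want to salvage your approach, you would first have to correctly identify the fiber as a resolved $G$-quotient of $\{x_1x_2x_3=t\}$ and then redo the invariant-theory computation for the $G$-action on the cohomology of that hypersurface, which is substantially harder than the paper's direct curve count.
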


The notion of an elliptic modular surface is due to Shioda \cite{modsur}, who attaches to any finite index subgroup $\Gamma$ of $SL(2,\mathbb{Z})$ not containing $-\mathrm{Id}$ a corresponding extremal elliptic surface $S_\Gamma$. This $S_{\Gamma}$ is fibered over the modular curve $C_\Gamma$, given by  $\Gamma\backslash \mathcal{H}$ together with finitely many cusps, such that $S_{\Gamma}$ is a universal family for the moduli space of elliptic curves parametrized by the curve $C_\Gamma$. Therefore, Theorem \ref{elmod} implies that the surface $Z_{c,2}$ is a universal family for the moduli curve $C_{\Gamma_c}$. 

In terms of the threefold $Z_{c,3}$ fibered by K3 surfaces of Picard rank $19$, for $S$ a general smooth K3 fiber, consider the Neron-Severi group $NS(S)\coloneqq H^2(S,\mathbb{Z})\cap H^{1,1}(S)$ and the transcendental lattice $T_S=NS(S)^{\perp}$ in $H^2(S,\mathbb{Z})$.  Because $NS(S)$ has rank $19$, it follows from results of Morrison \cite{morrison2} and Nikulin \cite{nikulin} that there is a unique moduli curve parametrizing the K3 surfaces with this fixed transcendental lattice $T_S$. Hence, in analogy with our result in the two-dimensional case, Theorem \ref{main theorem} implies that the threefold $Z_{c,3}$ may be viewed as a finite cover of the universal family of the moduli curve parametrizing K3 surfaces with this transcendental lattice (see Corollary \ref{final cor}). 

More generally, it would be interesting to see if for any $n\ge 2$ the Iitaka fibration $Z_{c,n}\rightarrow \mathbb{P}^1$ is a finite cover of the universal family of the moduli curve parametrizing Calabi-Yau varieties with some fixed Hodge-theoretic data. However, the singularities of the quotient $C_g^n/G$ are non-canonical and thus understanding the geometry and Hodge theory of these fibers becomes difficult as $n$ grows. 

The organization of the paper is as follows. In Section \ref{construction} we outline Schreieder's construction in \cite{schreieder} of the varieties $X_{c,n}$. Then in Section \ref{kodaira dimension section} we show that that for $c\ge 2$ the varieties $X_{c,n}$ have Kodaira dimension $1$. In Section \ref{iitaka section} we analyze the geometry of the Iitaka fibration of $X_{c,n}$, proving that its image is the curve $\mathbb{P}^1$. In Section \ref{surface section}, we focus just on the $n=2$ case and analyze in detail the elliptic fibration resulting from the Iitaka fibration studied in Section \ref{iitaka section} and show that it equips $Z_{c,2}$ with the structure of an elliptic modular surface. Lastly in Section \ref{threefold section} we focus on the $n=3$ case, proving that the smooth fibers of the Iitaka fibration $Z_{c,3}\rightarrow \mathbb{P}^1$ are K3 surfaces of Picard rank $19$ and discussing the moduli interpretations of this result.


\section{Construction of $X_{c,n}$}\label{construction}
For a fixed $c\ge2$ consider the complex hyperelliptic curve $C_g$ of genus $g=\frac{3^c-1}{2}$ given by the smooth projective model of the affine curve 
\[\{y^2=x^{2g+1}+1\}.\]
obtained by adding a point at $\infty$. This point is covered by an affine piece
$\{v^2=u^{2g+2}+u\},$
such that  $x=u^{-1}$ and $y=v\cdot u^{-g-1}$. One may verify that $x$ is a local coordinate in the patch $\{y^2=x^{2g+1}+1\}$ and that $v$ is a local coordinate in the patch $\{v^2=u^{2g+2}+u\}.$

Fix $\zeta$ a primitive $3^c$-th root of unity. The curve $C_g$ then comes equipped with an automorphism $\psi_g$ of order $3^c=2g+1$ given by
\begin{align*}
(x,y)&\mapsto (\zeta x, y)\\
(u,v)&\mapsto (\zeta^{-1} u, \zeta^{g} v).
\end{align*}

Consider the action on the $n$-dimensional product $C_g^n$ given by the group
\[G\coloneqq \{\psi_g^{a_1}\times \cdots \times \psi_g^{a_n}\mid a_1+\cdots+a_n\equiv 0 \bmod 3^c\},\]
where the automorphism $\psi_g^{a_i}$ acts on the $i$-th factor in the product. Note that $G\cong \left(\mathbb{Z}/3^c\mathbb{Z}\right)^{n-1}$.

 The smooth projective variety $X_{c,n}$ whose construction we detail below is then a smooth model of the quotient $C_g^n/G$. This $n$-dimensional variety $X_{c,n}$ is constructed inductively as a pair $(X_{c,n}, \phi_{c,n})$, where $\phi_{c,n}$ is a distinguished automorphism of $X_{c,n}$.

The inductive construction of $(X_{c,n}, \phi_{c,n})$ detailed in \cite[Section 8]{schreieder} proceeds as follows.  Suppose the pairs of varieties and distinguished automorphisms $(X_{c,n_1}, \phi_{n_1})$ and $(X_{c,n_2}, \phi_{n_2})$ have both been constructed. We then detail the construction of the pair $(X_{c,n_1+n_2},\phi_{c,n_1+n_2})$.

Consider the subgroup of $\Aut(X_{c,n_1}\times X_{c,n_2})$ given by
\[
H:= \langle \phi_{c,n_1}^{-1}\times \mathrm{id}, \mathrm{id}\times \phi_{c,n_2}\rangle.
\] 
For each $i=1,\ldots, c$, consider the element of order $3^i$ inside of $H$ given by
\[\eta_i:=( \phi_{c,n_1}^{-1}\times \phi_{c,n_2})^{3^{c-i}}.\]
Thus $\eta_i$ generates a cyclic subgroup $H_i:=\langle\eta_i\rangle \subset H,$ which gives a filtration
\[0=H_0\subset H_1\subset \cdots \subset H_c=\langle \phi_{c,n_1}^{-1}\times \phi_{c,n_2}\rangle\]
such that each quotient $H_i/H_{i-1}$ is cyclic of order $3$.  

Now, let
\begin{equation*}
\begin{aligned}
Y_0&=X_{c,n_1}\times X_{c,n_2},\\
Y_0'& = \text{Blow up of } Y_0 \text{ along } \mathrm{Fix}_{Y_0}(\eta_1),\\
Y_0''& =\text{Blow up of } Y_0' \text{ along } \mathrm{Fix}_{Y_0'}(\eta_1).
\end{aligned}
\end{equation*}
Observe that since the action of the group $H$ restricts to an action on $ \mathrm{Fix}_{Y_0}(\eta_1)$, the action of $H$ on $Y_0$ lifts to an action on $Y_0'$ and then similarly to an action on $Y_0''$. Here, by abuse of notation, we let $\langle\eta_1\rangle$ denote both the subgroups of $\mathrm{Aut}(Y_{0}')$ and $\mathrm{Aut}(Y_{0}'')$ generated by the action of $\eta_1\in H$.

Define for $i\in \{1,\ldots,c\}$: 
\begin{equation*}
\begin{aligned}
Y_i&=Y_{i-1}''/\langle\eta_i\rangle,\\
Y_i'& = \text{Blow up of } Y_i \text{ along } \mathrm{Fix}_{Y_i}(\eta_{i+1}),\\
Y_i''& =\text{Blow up of } Y_i' \text{ along } \mathrm{Fix}_{Y_i'}(\eta_{i+1}).
\end{aligned}
\end{equation*}
Namely we have the following diagram
\[
\begin{tikzcd}
&\arrow{dl} Y_0''\arrow{dr}&& \arrow{dl}Y_1''\arrow{dr}&&\arrow{dl}\cdots \arrow{dr} && \arrow{dl}Y_{k-1}''\arrow{dr}&\\
Y_0 && Y_1&& Y_2&& Y_{k-1}&&Y_k,
\end{tikzcd}
\]
where each arrow to the left in the above diagram corresponds to a sequence of two blow-up maps and each arrow to the right corresponds to a $3:1$ cover.

Schreieder proves in \cite[Proposition 19] {schreieder} that each $Y_{i+1}$ is a smooth model of $Y_{i}/\langle\eta_i\rangle$ and thus, in particular, the variety $X_{c,n_1+n_2}\coloneqq Y_c$ is a smooth model of $Y_0/\langle \phi_{c,n_1}^{-1}\times \phi_{c,n_2}\rangle$. The distinguished automorphism $\phi_{c,n_1+n_2}$ on $X_{c,n_1+n_2}$ is then defined to be the one induced by the automorphism $\mathrm{id}\times \phi_{c,n_2}$ on $Y_0$. 

The inductive construction of the pairs $(X_{c,n}, \phi_{c,n})$ is then as follows. When $n=1$, the pair $(X_{c,1}, \phi_{c,1})$ is the curve $X_{c,1}\coloneqq C_g$ equipped with the distinguished automorphism $\phi_{c,1}\coloneqq \psi_g$. Hence, by the above construction, if one can construct the pair $(X_{c,n},\phi_{c,n})$, one can construct the pair $(X_{c,n+1},\phi_{c,n+1})$. Namely one can construct $(X_{c,n}, \phi_{c,n})$ for all $n\ge 1$.

\section{The Kodaira dimension of $X_{c,n}$}\label{kodaira dimension section}

For a smooth algebraic variety $V$ and any $m> 0$, the $m$\emph{-th plurigenus} of $V$ is given by $P_m=h^0(V,K_{V}^{\otimes m})$. The \emph{Kodaira dimension} $\kappa$ of $V$ is $-\infty$ if $P_m=0$ for all $m>0$ and otherwise it is the minimum $\kappa$ such that $P_m/m^\kappa$ is bounded. If $V$ has dimension $n$, then the Kodaira dimension of $V$ is either $-\infty$ or an integer $0\le \kappa \le n$.

In order to compute the Kodaira dimension of the variety $X_{c,n}$ for $c\ge 2$, we thus wish to consider the plurigenera $P_m=h^0(X_{c,n},K_{X_{c,n}}^{\otimes m})$ for $m>0$. We show in Proposition \ref{koddim}, by inducting on the dimension $n$, that the plurigenera $P_m$ grow linearly with $m$ and hence that the varieties $X_{c,n}$ for $c\ge 2$ have Kodaira dimension $1$. 

As detailed in Section \ref{construction}, the variety $X_{c,n}$ is constructed from $C_g^n$ by a sequence of blow-ups and quotients. We thus have an injection of global sections of powers of the canonical bundle induced by the rational map $C_g^n\dashrightarrow X_{c,n}$. To compute the $P_m$, one then just needs to determine which of the $G$-invariant global sections of $K_{C_g^n}^{\otimes m}$ descend to global sections of $K_{X_{c,n}}^{\otimes m}$. To accomplish this, it is necessary first in Sections \ref{quotient} and \ref{blowup} to establish what happens to global sections of powers of the canonical bundle of $C_g^n$ under blow-ups and quotients. 

It is also necessary to understand the local action of the group $G$ and in particular the relationship between the local action in the $n$-dimensional case in relation to the $n+1$-dimensional case. Thus in Section \ref{phi action section} we analyze the local action of the automorphism $\phi_{c,n}$ on $X_{c,n}$. We then use this in Sections \ref{local weight section} and \ref{vanishing section} to identify distinguished coordinate patches $R_c$ and $S_c$ on $X_{c,n}$ on which we may describe the vanishing of a form in $H^0(X_{c,n},K_{X_{c,n}}^{\otimes m})$ in relation to the vanishing of the corresponding form on $Y_0\coloneqq C_g\times X_{c,n-1}$. 

The ingredients of Sections \ref{quotient}--\ref{vanishing section} are then used in the proof of Proposition \ref{koddim} to trace the vanishing of forms corresponding to global sections of powers of the canonical bundle of $C_g^n$ through the construction detailed in Section \ref{construction}. We determine that the only global sections of $K_{C_g^n}^{\otimes m}$ that descend to global sections of $K_{X_{c,n}}^{\otimes m}$ are those of the form $x_1^a\omega_1\times x_2^a\omega_2\times \cdots \times x_n^a\omega_n$ for $0\le a\le m(g-1)$, where $(x_i,y_i)$ are coordinates on the $i$-th factor in the product $C_g^n$ and $\omega_i=\frac{dx_i^{\otimes m}}{y_i^m}$, and hence the Kodaira dimension of $X_{c,n}$ is $1$. 


\subsection{Forms Under Quotients}\label{quotient}
Recalling the notation from the construction of $X_{c,n}$ in Section \ref{construction}, consider the $3:1$ cover maps $f_i: Y_i''\rightarrow Y_{i+1}$, where $Y_i''$ and $Y_{i+1}$ have dimension $n$. The Riemann-Hurwitz formula yields
\begin{equation}\label{rhurwitz}K_{Y_i''}=f_i^*\left(K_{Y_{i+1}}+\sum_{D\in \mathrm{Div}(Y_{i+1})}\frac{a_D-1}{a_D}D\right)\end{equation}
where for each $D\in \mathrm{Div}(Y_{i+1})$, the number $a_D$ is the order of the group of automorphisms of $Y_{i}''$ fixing the components of $f_i^*D$ pointwise. 

By construction, the group $H_{i+1}/H_i$ acting on $Y_i''$ is isomorphic to $\mathbb{Z}/3\mathbb{Z}$. Namely for every irreducible divisor $D\in \mathrm{Div}(Y_{i+1})$, either $a_D=1$ or $a_D=3$. Moreover, the irreducible $D$ for which $a=3$ are exactly the images of the irreducible components of the exceptional divisors $E_i''$ obtained from the blow-up map $Y_i''\rightarrow Y_i'$, where it should be noted that it may happen that $E_i''\cong E_i'$. Let $E_{i,1}''\ldots, E_{i,k_i}''$ be the irreducible components of  $E_i''$. Observe that since $\eta_{i+1}$ fixes each of the $E_{i,j}''$, each component $E_{i,j}''$ descends to an irreducible divisor on $Y_{i+1}$. Equation (\ref{rhurwitz}) then yields:
\[K_{Y_i''}=f_i^*\left(K_{Y_{i+1}}\right)+\sum_{j=1}^{k_i}2E_{i,j}''.\]
This gives:
\begin{equation}\label{rhurwitz2}K_{Y_i''}^{\otimes m}-\sum_{j=1}^{k_i}2mE_{i,j}'' =f_i^*\left(K_{Y_{i+1}}^{\otimes m}\right).\end{equation}

For an algebraic variety $V$ with a coordinate patch $(z_1,\ldots,z_n)$ having the standard action of $\mathbb{G}_m^n$ on $\mathbb{C}^n$, we say that a pluriform $\omega$ is \emph{toric} on the patch $(z_1,\ldots,z_n)$ if the divisor of zeros of $\omega$ is invariant under the action of $\mathbb{G}_m^n$. 

\begin{defn} A toric form $\omega$ on a coordinate patch $(z_1,\ldots,z_n)$ of an algebraic variety $V$ has \emph{vanishing sequence} $(\beta_1,\ldots, \beta_n)$ on the point $(z_1,\ldots,z_n)=(0,\ldots,0)$ if $\omega$ vanishes to order $\beta_i$ along the hypersurface $z_i=0$.
\end{defn}
Now consider an $H_{i+1}$-invariant pluriform $\sigma$ on $Y_i''$.  Suppose $Y_i''$ has local coordinates $(z_1,\ldots,z_n)$ around some $E_{i,j}''$ fixed by the action of $H_{i+1}$ such that without loss of generality $E_{i,j}''$ is given by $z_1=0$. Consider the point  $R=(0,\ldots,0)$ on $E_{i,j}''$ and suppose the vanishing sequence of $\sigma$ on $R$ is
$(\alpha_1,\ldots,\alpha_n).$
Then, using Equation (\ref{rhurwitz2}), the vanishing sequence of the descent of $\sigma$ to $Y_{i+1}$ has vanishing sequence on the image of $R$ in $Y_{i+1}$ given by
\begin{equation}\label{quotienteq}(\frac{1}{3}(\alpha_1-2m), \alpha_2,\ldots,\alpha_n).\end{equation}

\subsection{Forms Under Blow-Ups}\label{blowup}

Let $V$ be an $n$-dimensional variety with local coordinates $(z_1,\ldots, z_n)$. Suppose $Z$ is a subvariety of $V$ of codimension $k\ge 2$ given locally by $z_1=\cdots=z_k=0$. Suppose $\sigma$ is a global section of $K_V^{\otimes m}$ for $m\ge 1$ with vanishing sequence  $(\alpha_1,\ldots, \alpha_k,0,\ldots,0)$ on $Z$. Namely, the pluriform $\sigma$ is given locally by
\[f(z_1, \ldots, z_n)(dz_1\cdots dz_n)^{\otimes m},\]
where the polynomial $f$ has vanishing sequence $(\alpha_1,\ldots,\alpha_k,0,\ldots,0)$ on $z_1=\cdots=z_k=0$. 

Blowing up $V$ at $Z$ introduces new coordinates $(z_1',\ldots, z_k')$, with $z_iz_j'=z_jz_i'$ for all $i, j$. Hence, on the coordinate patch of the blown-up variety  $V'$ given by $z_i'\ne0$, we have coordinates
\[(\tilde{z}_1,\ldots, \tilde{z}_{i-1},z_i, \tilde{z}_{i+1}, \ldots, \tilde{z}_k,z_{k+1},\ldots, z_n),\]
 where $\tilde{z}_j=\frac{z_j'}{z_i'}$ and thus $z_j=z_i\tilde{z}_j$.  Thus, locally around the exceptional divisor $E$, the pluriform $\sigma$ pulls back to the pluriform:
\[z_i^{m(k-1)}f(z_i\tilde{z}_1, \ldots, z_i\tilde{z}_{i-1}, z_i, z_i\tilde{z}_{i+1},\ldots, z_i\tilde{z}_k, z_{k+1}, \ldots, z_n)(d\tilde{z}_1\cdots d\tilde{z}_{i-1}dz_id\tilde{z}_{i+1}\cdots d\tilde{z}_kdz_{k+1}\cdots dz_n)^{\otimes m}.\]

In the new coordinates $(\tilde{z}_1,\ldots, \tilde{z}_{i-1},z_i, \tilde{z}_{i+1}, \ldots, \tilde{z}_k,z_{k+1},\ldots, z_n),$ consider the point $R=(0,\ldots,0)$. Then the vanishing sequence on $R$ of the pullback of $\sigma$ to $V'$ is given by:
\begin{equation}\label{blowupeq}(\alpha_1,\ldots,\alpha_{i-1},\sum_{l=1}^k \alpha_l +m(k-1), \alpha_{i+1},\ldots, \alpha_n).\end{equation}


\subsection{Local Action of $\phi_{c,n}$}\label{phi action section}
Consider the curve $C_g$ together with its action by the automorphism $\psi_g$. Observe that the fixed set of the action of $\psi_g$ on $C_g$ consists of $3$ points. Indeed, in the coordinate patch given by  $\{y^2=x^{2g+1}+1\}$ we have two fixed points
\[P_1\colon (x,y)=(0,1) \text{ and } P_2\colon (x,y)=(0,-1)\]
and in the coordinate patch given by $\{v^2=u^{2g+2}+u\}$, we have a third fixed point
\[Q\colon (u,v)=(0,0).\]
Now recall the construction of the variety $X_{c,n}$ and its distinguished automorphism $\phi_{c,n}$ from the curve $C_g$ and its automorphism $\psi_g$ detailed in Section \ref{construction}. In the following lemma, we detail the local action of $\phi_{c,n}$ and the corresponding vanishing of forms on two particular coordinate patches on $X_{c,n}$.

\begin{lem}\label{localaction}
For any $c\ge 2$ and $n\ge 1$, there exist coordinate patches $U_{n}$ and $V_n$ on $\mathrm{Fix}_{X_{c,n}}(\phi_{c,n})$, with local coordinates $(u_1,\ldots, u_n)$ and $(v_1,\ldots, v_n)$ respectively, satisfying:
\begin{enumerate}
\item The automorphism $\phi_{c,n}$ acts with $\mathbb{Z}/3^c\mathbb{Z}$-weights $(0,\ldots, 0, 1)$ on $U_n$ and  $(0,\ldots, 0, g)$ on $V_n$
\item For any $\tau\in H^0(C_g^{n-1}, K_{C_g^{n-1}}^{\otimes m})$ and $\sigma\in H^0(C_g, K_{C_g}^{\otimes m})$ such that $\sigma$ vanishes to order $\alpha$ on the point $P_1$ and to order $\beta$ on $Q$, the global section of $K_{X_{c,n}}^{\otimes m}$  induced by the form $\tau\times \sigma$ vanishes to order $\alpha$ on the hypersurface $u_n=0$ and to order $\beta$ on the hypersurface $v_n=0$
\end{enumerate}
\end{lem}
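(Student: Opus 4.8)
The plan is to induct on $n$, tracing the local data through the inductive construction of Section \ref{construction}. For the base case $n=1$ we have $X_{c,1}=C_g$ and $\phi_{c,1}=\psi_g$: take $U_1$ to be a coordinate neighborhood of $P_1$ with coordinate $u_1=x$ and $V_1$ a neighborhood of $Q$ with coordinate $v_1=v$. Since $\psi_g$ sends $x\mapsto\zeta x$ and $v\mapsto\zeta^g v$, the weights are $1$ and $g$ respectively, and part (2) is immediate because $C_g^{0}$ is a point, so $\tau$ is a constant and the induced section is $\sigma$ itself, vanishing to order $\alpha$ at $u_1=0$ and $\beta$ at $v_1=0$.

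For the inductive step I would build $X_{c,n}$ by taking $n_1=n-1$ and $n_2=1$ in the construction, so that $Y_0=X_{c,n-1}\times C_g$, the automorphism $\phi_{c,n}$ is induced by $\mathrm{id}\times\psi_g$, and $\eta_i=(\phi_{c,n-1}^{-1}\times\psi_g)^{3^{c-i}}$. Let $o_U$ be the origin of the patch $U_{n-1}$ from the inductive hypothesis and consider $(o_U,P_1)\in Y_0$ with local coordinates $(u_1,\dots,u_{n-1},x)$. Using the inductive weights $(0,\dots,0,1)$ of $\phi_{c,n-1}$ on $U_{n-1}$, one computes that $\phi_{c,n}$ has weights $(0,\dots,0,1)$ and that $\eta_i$ has weights $(0,\dots,0,-3^{c-i},3^{c-i})$ on $(u_1,\dots,u_{n-1},x)$. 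The crucial structural observation is that every $\eta_i$ acts trivially on the spectator coordinates $u_1,\dots,u_{n-2}$, so all blow-ups and quotients of the construction take place in the two-dimensional locus spanned by $s:=u_{n-1}$ and $x$; in particular $\mathrm{Fix}_{Y_0}(\eta_1)$ is locally $\{s=x=0\}$ of codimension $2$, and the whole problem reduces to a local computation in the $(s,x)$-plane, where $\phi_{c,n}$ has weights $(0,1)$, the order-$3$ automorphism $\eta_1$ has weights $(-3^{c-1},3^{c-1})\equiv(2,1)\bmod 3$, and $\tau\times\sigma$ has vanishing sequence $(\gamma,\alpha)$ for some $\gamma$ coming from $\tau$.

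I would then carry out one stage — two blow-ups along $\mathrm{Fix}(\eta_1)$ followed by the $3{:}1$ quotient — in this plane, applying (\ref{blowupeq}) to each blow-up and (\ref{quotienteq}) to the quotient. Working throughout in the chart that retains the weight-$0$ coordinate $s$, one finds after the first blow-up the coordinate $x/s$, of unchanged $\phi_{c,n}$-weight $1$ and unchanged vanishing $\alpha$; after the second blow-up the coordinate $x/s^2$, again of weight $1$ and vanishing $\alpha$; and that the second exceptional divisor (on which $\eta_1$ acts with weight $0$ in the $x/s^2$-direction) is fixed pointwise, so that the quotient replaces $s$ by $s^3$ while leaving $x/s^2$ and its vanishing order $\alpha$ untouched. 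The key point is that the weight-$1$ coordinate is never cubed and is only ever divided by the weight-$0$ coordinate, so neither its $\phi_{c,n}$-weight nor the order $\alpha$ to which $\tau\times\sigma$ vanishes along it is ever altered; the $\tau$-contribution $\gamma$ is absorbed entirely into the weight-$0$ coordinate via $\gamma\mapsto\tfrac13(\gamma+2\alpha)$. A direct check, using the $\eta_{i+1}$-weights $3^{c-i-1}$ on $x$ and $-3^{c-i-1}$ on $s$, shows that on $Y_1$ the coordinates $(s^3,\,x/s^2)$ carry $\phi_{c,n}$-weights $(0,1)$ and residual $\eta_2$-weights $(-3^{c-1},3^{c-1})\equiv(2,1)\bmod 3$ — exactly the configuration we started from. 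Hence the stage reproduces itself, and iterating through all $c$ stages yields $U_n$, with spectator coordinates $u_1,\dots,u_{n-2}$, the weight-$0$ coordinate $u_{n-1}$, and the surviving weight-$1$ coordinate $u_n=x/s^{3^c-1}$, along which $\tau\times\sigma$ still vanishes to order exactly $\alpha$. The patch $V_n$ is obtained identically starting from $(o_V,Q)$: the coordinate $v$ has $\psi_g$-weight $g$, and since $g\equiv 1\bmod 3$ the residual $\eta_i$-action again has mod-$3$ weights $(2,1)$, so the same computation gives $\phi_{c,n}$-weights $(0,\dots,0,g)$ and vanishing order $\beta$ on $v_n=0$.

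The main obstacle is the bookkeeping in the per-stage computation: one must verify at every stage that the correct chart is the one in which $\phi_{c,n}$ retains weights $(0,1)$ and in which the second exceptional divisor is pointwise $\eta_i$-fixed, so that (\ref{quotienteq}) applies there, and one must confirm that the residual action of the next $\eta_{i+1}$ again presents mod-$3$ weights $(2,1)$ so that the induction over the $c$ stages closes. A secondary point to address is toricity: the formulas (\ref{blowupeq}) and (\ref{quotienteq}) are stated for toric pluriforms, so I would either restrict attention to the monomial forms $\tau$ that actually arise in Proposition \ref{koddim}, or observe that the order of vanishing of $\tau\times\sigma$ along $u_n=0$ depends only on the $C_g$-factor $\sigma$ and is therefore insensitive to the precise shape of $\tau$.
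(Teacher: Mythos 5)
Your proposal is correct and follows essentially the same route as the paper: induction on $n$, reduction to a two-variable local computation because the $\eta_i$ act with weight $0$ on all spectator coordinates, choice at each blow-up of the chart retaining the weight-$0$ coordinate so that the $\phi_{c,n}$-weights $(0,\ldots,0,1)$ and $(0,\ldots,0,g)$ persist through the $3{:}1$ quotients, and deduction of part (2) from equations \eqref{quotienteq} and \eqref{blowupeq}. The only differences are cosmetic (you write $Y_0=X_{c,n-1}\times C_g$ where the paper uses $C_g\times X_{c,n-1}$) plus the fact that you make explicit the per-stage bookkeeping ($x\mapsto x/s^2$, $s\mapsto s^3$, residual mod-$3$ weights $(2,1)$) and the toricity caveat that the paper leaves implicit.
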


\begin{proof}
We proceed by induction on the dimension $n$. By the implicit function theorem, one may verify that the coordinate $x$ is a local coordinate in the patch $U_1$ on the curve $C_g$ given by  $\{y^2=x^{2g+1}+1\}$ and the coordinate $v$ is a local coordinate in the coordinate patch $V_1$ given by $\{v^2=u^{2g+2}+u\}$. Hence the automorphism $\phi_{c,1}\coloneqq \psi_g$ acts with $\mathbb{Z}/3^c\mathbb{Z}$-weight $1$ on $U_1$ and weight $g$ on $V_1$. Since $\mathrm{Fix}_{X_{c,1}}(\phi_{c,1})=\{P_1,P_2,Q\}$, this verifies the statement of the lemma in the $n=1$ case. 

Assume the result holds in the $n-1$ case and let $U_{n-1}$ and $V_{n-1}$ be the corresponding coordinate patches on $\mathrm{Fix}_{X_{c,n-1}}(\phi_{c,n-1})$. Now recall the inductive construction of the pair $(X_{c,n},\phi_{c,n})$ from the pair $(X_{c,n-1},\phi_{c,n-1})$ detailed in Section \ref{construction}. Let $Y_0=C_g\times X_{c,n-1}$ and consider the patches on $\mathrm{Fix}_{Y_0}(\psi_g\times \phi_{c,n-1})$ given by 
\[R_0\coloneqq U_1\times V_{n-1} \text{ \ and \ } S_0\coloneqq V_1\times U_{n-1}.\]

By assumption the automorphism $\eta_c\coloneqq \psi_g^{-1}\times \phi_{c,n-1}$ acts on these patches respectively with $\mathbb{Z}/3^c\mathbb{Z}$-weights 
\[(-1,0,\ldots, 0, g) \text{\ and \ } (-g,0,\ldots,0,1)\]
and the automorphism $\mathrm{id}\times \phi_{c,n-1}$ acts respectively with $\mathbb{Z}/3^c\mathbb{Z}$-weights 
\[(0,\ldots,0,g)\text{ \ and \ }(0,\ldots,0,1).\] 

Now consider the images of $R_0$ and $S_0$ along the sequence of blow-ups and quotients detailed in Section \ref{construction}. Observe that because $\eta_1$ acts with weight $0$ in all but two of the local coordinates around $R_0$ and $S_0$, for each $i\in \{1,\ldots,c\}$ the fixed locus of $\eta_i$ on the images of $R_0$ and $S_0$ will have codimension at most $2$. 

When the codimension is less then $2$, then the blow-up map is an isomorphism and so the local weights are unaffected. When the codimension is exactly $2$, the blow-up map locally introduces new coordinates which we denote $(\tilde{r}_1,\tilde{r}_n)$ and $(\tilde{s}_1,\tilde{s}_n)$ respectively. 

Inductively, taking the $\tilde{r}_1\ne0$ and $\tilde{s}_1\ne 0$ patches in the new blown-up coordinates ensures that $\mathrm{id}\times \phi_{c,n-1}$ still acts with weights $(0,\ldots,0,g)$  and $(0,\ldots,0,1)$ on these new patches. Moreover, the $\mathbb{Z}/3^c\mathbb{Z}$-weights of $\mathrm{id}\times \phi_{c,n-1}$ on the image of these coordinate patches under the $3:1$ quotient maps will be unaffected since neither $g$ nor $1$ is divisible by $3$. 

Hence let $V_n$ and $U_n$ be the coordinate patches on $Y_c\coloneqq X_{c,n}$ obtained by locally choosing the $\tilde{r}_1\ne0$ and $\tilde{s}_1\ne 0$ patches respectively at each stage in the sequence of blowups.  It then follows that the image of $\mathrm{id}\times \phi_{c,n-1}$ acts with $\mathbb{Z}/3^c\mathbb{Z}$-weights $(0,\ldots,0,g)$ on $V_n$ and $(0,\ldots,0,1)$ on $U_n$. Since $\phi_{c,n}$ is exactly the image of $\mathrm{id}\times \phi_{c,n-1}$  in $X_{c,n}$, this finishes the first part of the proof. The second part follows from the formulas in equations \eqref{quotienteq} and \eqref{blowupeq}.
\end{proof}

\subsection{The patches $R_c$ and $S_c$ on $X_{c,n}$}\label{local weight section}
Consider the product $Y_0\coloneqq C_g\times X_{c,n-1}$ together with its actions by the automorphisms $\eta_i\coloneqq (\psi_g^{-1}\times \phi_{c,n-1})^{3^{c-i}}$. In particular, let us consider the action of $\eta_c=\psi_g^{-1}\times \phi_{c,n-1}$. We now use Lemma \ref{localaction} to identify two distinguished patches on $X_{c,n}$, which we will denote by $R_c$ and $S_c$. 

Let $U_{n-1}$ and $V_{n-1}$ be the coordinate patches on $\mathrm{Fix}_{X_{c,n-1}}(\phi_{c,n-1})$ and $U_1$ and $V_1$ be the coordinate patches on $C_g$ determined in Lemma \ref{localaction}. It follows that the automorphism $\eta_c$ acts with $\mathbb{Z}/3^c\mathbb{Z}$-weights $(-1,0,\ldots,0,g)$ on $R_0\coloneqq U_1\times V_{n-1}$ and with $\mathbb{Z}/3^c\mathbb{Z}$-weights $(-g,0,\ldots,0,1)$ on $S_0\coloneqq V_1\times U_{n-1}$.

The blow-up map $Y_0'\rightarrow Y_0$ introduces new local coordinates $(\tilde{r}_1,\tilde{r}_n)$ and $(\tilde{s}_1,\tilde{s}_n)$ respectively on $R_0$ and $S_0$ and $\eta_c$ acts with weights 
\[(-(g+1),0,\ldots, 0, g)\]
 on the $\tilde{r}_n\ne0$ patch $R_0'$ of the strict transform of $R_0$ and with weights 
 \[(-g,0,\ldots,0,g+1)\]
 on the $\tilde{s}_1\ne0$ patch $S_0'$ of the strict transform of $S_0$. 
Similarly, the blow-up map $Y_0''\rightarrow Y_0'$ introduces new local coordinates $(\tilde{\tilde{r}}_1,\tilde{\tilde{r}}_n)$ and $(\tilde{\tilde{s}}_1,\tilde{\tilde{s}}_n)$ such that $\eta_c$ acts with weights 
\[(-(2g+1),0,\ldots, 0, g)=(0,0,\ldots, 0, g)\]
on the $\tilde{\tilde{r}}_n\ne0$ patch $R_0''$ of the strict transform of $R_0'$ and with weights 
\[(-g,0,\ldots,0,2g+1)=(-g,0,\ldots,0,0)\]
 on the $\tilde{\tilde{s}}_1\ne0$ patch $S_0''$ of the strict transform of $S_0'$. 

Let $R_1$ and $S_1$ be the images of $R_0''$ and $S_0''$ under the $3:1$ quotient map $Y_0''\rightarrow Y_1$. Then $\eta_c$ acts on $R_1$ with $\mathbb{Z}/3^{c-1}\mathbb{Z}$-weights $(0,0,\ldots, 0, g)$ and on $S_1$ with $\mathbb{Z}/3^{c-1}\mathbb{Z}$-weights $(-g,0,\ldots,0,0)$. 

In particular, the fixed locus under this $\eta_c$-action has codimension $1$ in both these patches. Namely the blow-up maps $Y_1''\rightarrow Y_1'\rightarrow Y_1$ are isomorphisms on $R_1$ and $S_1$. Inductively defining $R_i$ and $S_i$ to be the images under the $3:1$ quotient map $Y_{i-1}''\rightarrow Y_i$ of $R_{i-1}$ and $S_{i-1}$ respectively, we then have that in fact the blow-up maps $Y_i''\rightarrow Y_i'\rightarrow Y_i$ are all isomorphisms on the coordinate patches $R_i$ and $S_i$. 

Namely, the coordinate patch $R_c$ is obtained from $R_1$ simply by performing a sequence of $c-1$ quotients by $\mathbb{Z}/3\mathbb{Z}$ and the coordinate patch $S_c$ is similarly obtained from $S_1$ by performing a sequence of $c-1$ quotients by $\mathbb{Z}/3\mathbb{Z}$.

\subsection{Vanishing of Forms on $R_c$ and $S_c$}\label{vanishing section}
In the notation of Section \ref{local weight section}, consider a form $\sigma$ with vanishing sequence $(\alpha_1,\ldots,\alpha_n)$ on the point $(0,\ldots,0)$ of the coordinate patch $R_0$ and vanishing sequence $(\beta_1,\ldots,\beta_n)$ on the point $(0,\ldots,0)$ of the coordinate patch $S_0$. 

By Equation \ref{blowupeq}, the form $\sigma$ has vanishing sequences at the origins of $R_0'$ and $S_0'$ respectively given by
\[(\alpha_1,\ldots,\alpha_{n-1}, \alpha_1+\alpha_n+m) \text{ \ \  and \ \  }(\beta_1+\beta_n+m,\beta_2,\ldots, \beta_n)\]
and vanishing sequences at the origins of $R_0''$ and $S_0''$ respectively given by
\[(\alpha_1,\ldots,\alpha_{n-1}, 2\alpha_1+\alpha_n+2m) \text{ \ \  and \ \  }(\beta_1+2\beta_n+2m,\beta_2,\ldots, \beta_n).\]

By Equation \ref{quotienteq}, the form $\sigma$ then has vanishing sequences at the origins of $R_1$ and $S_1$ respectively given by
\[(\alpha_1,\ldots,\alpha_{n-1}, \frac{1}{3}(2\alpha_1+\alpha_n))\text{ \ \  and \ \  }(\frac{1}{3}(\beta_1+2\beta_n),\beta_2,\ldots, \beta_n).
\]

But as established in Section \ref{local weight section}, the coordinate patch $R_c$ is obtained from $R_1$ simply by performing a sequence of $c-1$ quotients by $\mathbb{Z}/3\mathbb{Z}$ and the coordinate patch $S_c$ is similarly obtained from $S_1$ by performing a sequence of $c-1$ quotients by $\mathbb{Z}/3\mathbb{Z}$. Hence it follows that $\sigma$ has vanishing sequence at the origin in $R_c$ given by
\begin{equation}\label{vanishing R_c}
\left(\alpha_1,\ldots,\alpha_{n-1},\frac{1}{3}\big(\cdots\big(\frac{1}{3}\big(\frac{1}{3}(2\alpha_1+\alpha_n)-2m\big)\cdots\big)-2m\big)\right)=\left(\alpha_1,\ldots,\alpha_{n-1},\frac{1}{3^c}(2\alpha_1+\alpha_n-m(3^c-3))\right)
\end{equation}
and has vanishing sequence at the origin in $S_c$ given by
\begin{equation}\label{vanishing S_c}
\left(\frac{1}{3}\big(\cdots\big(\frac{1}{3}\big(\frac{1}{3}(\beta_1+2\beta_n)-2m\big)\cdots\big)-2m\big),\beta_2,\ldots,\beta_n\right)=\left(\frac{1}{3^c}(\beta_1+2\beta_n-m(3^c-3)),\beta_2,\ldots,\beta_n\right).
\end{equation}


\subsection{Kodaira dimension Computation for $X_{c,n}$}

We now make use of what we have established in Sections \ref{quotient}--\ref{vanishing section} to prove in Proposition \ref{koddim} that the varieties $X_{c,n}$ have Kodaira dimension $1$. To do this we use the following theorem of K\"{o}ck and Tait.
\begin{thm}\label{global}\cite[Theorem 5.1]{kock} Let $C$ be a hyperelliptic curve of genus $g\ge 2$ of the form $y^2=f(x)$ and let $\omega\in K_C^{\otimes m}$ be given by $\omega=\frac{dx^{\otimes m}}{y^m}$. Then an explicit basis for $H^0(C,K_C^{\otimes m})$ is given by the following:
$$\begin{cases}
\omega, x\omega, \ldots, x^{g-1}\omega & \mbox{if } m=1\\
\omega, x\omega, x^2\omega & \mbox{if }m=2 \mbox{ and } g=2\\
\omega, x\omega, \ldots, x^{m(g-1)}\omega; y\omega, xy\omega, \ldots, x^{(m-1)(g-1)-2}y\omega & \mbox{otherwise}
\end{cases}.$$
\end{thm}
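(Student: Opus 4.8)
The plan is to identify $H^0(C, K_C^{\otimes m})$ explicitly by writing every pluricanonical section as a rational multiple of the distinguished form $\omega = (dx)^{\otimes m}/y^m$ and then imposing regularity. Since $\mathbb{C}(C) = \mathbb{C}(x) \oplus \mathbb{C}(x)\cdot y$ as a $\mathbb{C}(x)$-vector space, any section $s \in H^0(K_C^{\otimes m})$ can be written uniquely as $s = (P(x) + Q(x) y)\,\omega$ with $P, Q \in \mathbb{C}(x)$. The forms listed in the theorem are exactly the pieces $x^i \omega$ (where $P = x^i$, $Q = 0$) and $x^j y\,\omega$ (where $P = 0$, $Q = x^j$), so it suffices to show that these are global sections, that they are linearly independent, and that regularity forces $P, Q$ to be polynomials with $\deg P \le m(g-1)$ and $\deg Q \le (m-1)(g-1) - 2$, so that they in fact span. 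Linear independence is immediate from the direct-sum decomposition $\mathbb{C}(x)\oplus \mathbb{C}(x) y$, and as a consistency check the number of listed forms, $\bigl(m(g-1)+1\bigr) + \bigl((m-1)(g-1)-1\bigr) = (2m-1)(g-1)$, agrees with $\dim H^0(K_C^{\otimes m})$ computed from Riemann--Roch, using $\deg K_C^{\otimes m} = 2m(g-1) > 2g-2$ and the vanishing of the relevant $H^1$ for $m \ge 2$.

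First I would treat the finite affine points, where the analysis is routine. On the locus $y \ne 0$ the coordinate $x$ is a local uniformizer, while at a branch point (a simple root of $f$) the coordinate $y$ is a uniformizer with $\operatorname{ord}(dx) = 1$; in either case a short local computation gives $\operatorname{ord}(\omega) = 0$, so $\omega$ is regular and nowhere vanishing on the affine curve. Consequently $s = (P + Qy)\omega$ is regular at the finite points precisely when $\phi := P + Qy$ is. Over a non-branch value $x = a$, the two points of $C$ carry the values $P(a) \pm Q(a)\sqrt{f(a)}$, and $P, Q$ are recovered as the half-sum and half-difference of these; since $\sqrt{f(a)} \ne 0$, regularity of $\phi$ at both points forces $P$ and $Q$ to be regular at $a$. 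A separate check at the branch points, where $x - a \sim c\,y^2$, likewise forces regularity there. Hence $P$ and $Q$ have no finite poles and are polynomials.

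The heart of the argument is the behavior at infinity, and this is where I expect the main difficulty. Taking $\deg f = 2g+1$, so that there is a single point $\infty$ (the even-degree case $\deg f = 2g+2$ is analogous, with two conjugate points at infinity yielding the same bounds), I would compute orders using the uniformizer $t$ with $x \sim t^{-2}$ and $y \sim t^{-(2g+1)}$, giving $\operatorname{ord}_\infty(dx) = -3$ and hence $\operatorname{ord}_\infty(\omega) = -3m + m(2g+1) = 2m(g-1)$. Then $\operatorname{ord}_\infty(x^i\omega) = 2m(g-1) - 2i \ge 0$ exactly when $i \le m(g-1)$, and $\operatorname{ord}_\infty(x^j y\,\omega) = 2m(g-1) - 2j - (2g+1) \ge 0$ exactly when $j \le (m-1)(g-1) - 2$. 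Combined with the previous paragraph, a global section $s = (P+Qy)\omega$ is regular if and only if $P$ is a polynomial of degree at most $m(g-1)$ and $Q$ a polynomial of degree at most $(m-1)(g-1) - 2$. When $(m-1)(g-1) < 2$ (that is, $m = 1$ for any $g$, or $m = 2$ and $g = 2$) the latter bound is negative, forcing $Q = 0$ and recovering the two special cases in the statement. This exhibits the listed forms as a spanning, linearly independent family, completing the proof. The one point genuinely requiring care is the order computation at infinity, together with checking that the odd- and even-degree normalizations yield the identical bounds $m(g-1)$ and $(m-1)(g-1)-2$; everything else reduces to the elementary local structure of $\omega$.
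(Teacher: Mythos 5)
The paper does not prove this statement; it is quoted directly from K\"ock--Tait \cite[Theorem 5.1]{kock}, so there is no internal argument to compare against. Your proof is correct and is the standard one: writing any section as $(P+Qy)\omega$ with $P,Q\in\mathbb{C}(x)$, observing that $\omega$ is regular and nowhere vanishing on the affine locus so that finite regularity forces $P,Q$ to be polynomials, and reading off the degree bounds $\deg P\le m(g-1)$, $\deg Q\le (m-1)(g-1)-2$ from the orders at infinity, with the two special cases falling out when the latter bound is negative; the dimension count against Riemann--Roch confirms spanning. The only step worth spelling out is that at infinity there is no cancellation between the $P\omega$ and $Qy\omega$ contributions: in the odd-degree model $\operatorname{ord}_\infty(x^i)=-2i$ is even while $\operatorname{ord}_\infty(x^jy)=-2j-(2g+1)$ is odd, and in the even-degree model the two points at infinity are exchanged by the hyperelliptic involution, so in either case regularity of $(P+Qy)\omega$ forces regularity of each summand separately --- the same parity/involution observation you already use at the finite branch points.
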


\begin{prop}
\label{koddim} For any $c\ge2$ and $n\ge 2$, the  variety $X_{c,n}$ has Kodaira dimension $1$.\end{prop}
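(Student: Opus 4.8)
The plan is to compute the plurigenera $P_m = h^0(X_{c,n}, K_{X_{c,n}}^{\otimes m})$ by tracking which $G$-invariant pluriforms on $C_g^n$ descend to honest global sections on the smooth model $X_{c,n}$, and to show this count grows linearly in $m$. Since the rational map $C_g^n \dashrightarrow X_{c,n}$ induces an injection $H^0(X_{c,n}, K^{\otimes m}) \hookrightarrow H^0(C_g^n, K_{C_g^n}^{\otimes m})^G$, and the latter space decomposes as a sum of tensor products $\sigma_1 \times \cdots \times \sigma_n$ of the explicit basis elements from Theorem \ref{global}, the whole computation reduces to a bookkeeping of vanishing sequences. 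First I would set up the induction on $n$, with the base case $n=1$ being the curve $C_g$ itself, where $h^0(C_g, K_{C_g}^{\otimes m})$ is given directly by K\"ock--Tait and already grows linearly. The inductive step uses the construction $Y_0 = C_g \times X_{c,n-1}$ together with Lemma \ref{localaction} and the patches $R_c$, $S_c$ of Sections \ref{local weight section}--\ref{vanishing section}.

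The core of the argument is to impose the condition that a candidate $G$-invariant form actually descends to $X_{c,n}$ without poles. Using equations \eqref{vanishing R_c} and \eqref{vanishing S_c}, a form with vanishing sequences $(\alpha_1,\ldots,\alpha_n)$ on $R_0$ and $(\beta_1,\ldots,\beta_n)$ on $S_0$ descends to $R_c$ and $S_c$ with last (resp. first) entry $\tfrac{1}{3^c}(2\alpha_1+\alpha_n - m(3^c-3))$ and $\tfrac{1}{3^c}(\beta_1 + 2\beta_n - m(3^c-3))$. For the descended form to be holomorphic (non-negative vanishing order) on these patches, these quantities must be non-negative integers; the divisibility by $3^c$ and the non-negativity are exactly what cut down the admissible basis monomials. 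I would translate the basis from Theorem \ref{global} into vanishing orders at the three fixed points $P_1, P_2, Q$ of $\psi_g$ on $C_g$: the form $x^a \omega$ (with $\omega = dx^{\otimes m}/y^m$) vanishes to order $a$ at $P_1$ (where $x$ is a coordinate) and to a computable order at $Q$ (where $v$ is a coordinate), and one checks the $y\omega$-type basis elements fail the invariance or integrality constraints. Running the constraints from \eqref{vanishing R_c} and \eqref{vanishing S_c} through the product, together with the invariance under $G \cong (\mathbb{Z}/3^c\mathbb{Z})^{n-1}$, I expect to find that the only surviving sections are exactly those of the form
\[
x_1^a \omega_1 \times x_2^a \omega_2 \times \cdots \times x_n^a \omega_n, \qquad 0 \le a \le m(g-1),
\]
as anticipated in the discussion preceding the proposition. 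This gives $P_m = m(g-1)+1$, which grows linearly in $m$, so $P_m/m$ is bounded while $P_m/m^0$ is not, forcing $\kappa = 1$.

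The main obstacle I anticipate is verifying that no \emph{other} $G$-invariant combinations survive the descent — in particular ruling out the $y\omega$-type basis vectors and mixed tensors where the exponents $a_i$ are not all equal. The weight constraint from $G$-invariance forces a relation among the exponents modulo $3^c$, but one must combine this with the integrality/non-negativity conditions coming from \emph{both} patches $R_c$ and $S_c$ simultaneously, and also confirm that holomorphicity at the remaining fixed point $P_2$ and along the exceptional divisors introduced by the blow-ups imposes no additional independent constraints (or exactly the expected ones). The delicate point is that the blow-up loci are of codimension two precisely where two tensor factors are simultaneously at fixed points, so one must ensure the vanishing-sequence calculations from Sections \ref{blowup} and \ref{vanishing section} have been applied at every such locus and not merely at the distinguished patches $R_0, S_0$. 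Once it is shown that the joint conditions are met only by the diagonal family $x_i^a \omega_i$ with a common exponent $a$ in the stated range, the linear growth $P_m = m(g-1)+1$ is immediate and the Kodaira dimension follows.
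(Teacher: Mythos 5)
Your proposal follows essentially the same route as the paper's proof: inject $H^0(X_{c,n},K_{X_{c,n}}^{\otimes m})$ into the $G$-invariant pluriforms on $C_g^n$, expand in the K\"{o}ck--Tait basis of Theorem \ref{global}, and run the vanishing-sequence formulas \eqref{vanishing R_c} and \eqref{vanishing S_c} on the patches $R_c$ and $S_c$ to show that only the diagonal forms $x_1^a\omega_1\times\cdots\times x_n^a\omega_n$ with $0\le a\le m(g-1)$ survive, giving linear growth of $P_m$. The only organizational differences are that the induction on the descent claim must start at $n=2$ rather than $n=1$ (since for $n=1$ the $x^ay\omega$-type sections do survive, so the first inductive step has to treat all products of basis elements directly, as the paper does), and that the paper gets $\kappa\ge 1$ from the known value $h^0(X_{c,n},K_{X_{c,n}})=g\ge 4$ rather than from the exact count $P_m=m(g-1)+1$, which would additionally require verifying that every diagonal form actually descends.
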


\begin{proof}
Fix some $m>0$ and consider the form on the affine patch $U_1\coloneqq \{y^2=x^{2g+1}+1\}$ of the curve $C_g$ given by
\[\omega\coloneqq \frac{dx^{\otimes m}}{y^{m}}.\]
By Theorem \ref{global}, we are interested in global sections of $K_{C_g}^{\otimes m}$ of the form $x^a\omega$, where $0\le a\le m(g-1)$ or of the form $x^ay\omega$, where $0\le a \le (m-1)(g-1)-2$. 

Note that since  the variable $x$ is a local coordinate near the $\psi_g$-fixed points $P_1$ and $P_2$ of $C_g$,  the form $\omega$ has order of vanishing equal to $0$ at $P_1$ and $P_2$. Hence both $x^{a}\omega$ and $x^{a}y\omega$ have order of vanishing equal to $a$ at the fixed points $P_1$ and $P_2$. 

On the affine patch $V_1\coloneqq \{v^2=u^{2g+2}+u\}$, the variable $v$ is a local coordinate near the $\psi_g$-fixed point $Q$ and the form $\omega$ is given by
\[\frac{(-1)^{d}u^{m(g-1)}du^{\otimes m}}{v^{m}}\]
The equation $v^2=u^{2g+2}+u$ yields $2v\cdot dv=((2g+2)u^{2g+1}+1)\cdot du$. Namely, $du$ and $v$ vanish to the same order. Moreover, $u$ has order of vanishing $2$ with respect to $v$, hence the order of vanishing of $\omega$ at the point $Q$ is $2m(g-1)=m(3^{c}-3)$. Thus a form $x^{a}\omega=u^{-a}\omega$ has order of vanishing at $Q$ given by 
\[m(3^{c}-3)-2a\]
and a form $x^{a}y\omega=u^{-(a+g+1)}v\omega$ has order of vanishing at $Q$ given by
\[2m(g-1)-2(a+g+1)+1=m(3^{c}-3)-2a-3^c.\]

Since global sections of $K_{X_{c,n}}^{\otimes m}$ inject into global sections of $K_{C_g^n}^{\otimes m}$ via the rational map $C_g^n\dashrightarrow X_{c,n}$, in order to prove that the varieties $X_{c,n}$ have Kodaira dimension $1$, it is enough to show that global sections of $K_{X_{c,n}}^{\otimes m}$ correspond to global sections of $K_{C_g^n}^{\otimes m}$ of the form $x_1^a\omega_1\times x_2^a\omega_2\times \cdots \times x_n^a\omega_n$ for $0\le a\le m(g-1)$. Here we implicitly use the natural map $H^0(C_g, K_{C_g})^{\otimes n} \rightarrow H^0(C_g^n,K_{C_g^n}^{\otimes m})$.

More precisely we will show that any global section of $K_{C_g^n}^{\otimes m}$ which descends to a global section of $K_{X_{c,n}}^{\otimes m}$ must be of the form $x_1^a\omega_1\times x_2^a\omega_2\times \cdots \times x_n^a\omega_n$ for $0\le a\le m(g-1)$. Since the number of such forms is linear in $m$, the Kodaira dimension of $X_{c,n}$ is at most $1$. However, by construction, we have $h^0(X_{c,n},K_{X_{c,n}})=g$, where we know $g\ge4$.  In particular, this means $P_1=h^0(X_{c,n},K_{X_{c,n}})$ is greater than $1$, so the Kodaira dimension of $X_{c,n}$ is at least equal to $1$. Hence, the Kodaira dimension of $X_{c,n}$ is exactly equal to $1$.

In order to prove that any global section of $K_{C_g^n}^{\otimes m}$ which descends to a global section of $K_{X_{c,n}}^{\otimes m}$ must be of the form $x_1^a\omega_1\times x_2^a\omega_2\times \cdots \times x_n^a\omega_n$ for $0\le a\le m(g-1)$, we will induct on the dimension $n$ of the variety $X_{c,n}$.

So let us begin with the $n=2$ case and consider $Y_0=C_g\times C_g$. Let $\sigma$ be a global section of $K_{Y_0}^{\otimes m}=K_{C_g^2}^{\otimes m}$ with vanishing sequence $(\alpha_1,\alpha_2)$ on the point $(0,0)=(P_i,Q)$ of $R_0$ and with vanishing sequence $(\beta_1,\beta_2)$ on the points $(0,0)=(Q, P_i)$ of $S_0$, where $R_0$ and $S_0$ are the coordinate patches of $Y_0$ defined in Section \ref{local weight section}. Then by Equations \eqref{vanishing R_c} and \eqref{vanishing S_c}, the form $\sigma$ has vanishing sequences 
\[\left(\alpha_1,\frac{1}{3^c}(2\alpha_1+\alpha_2-m(3^c-3)\right) \ \text{ and }\  \left(\frac{1}{3^c}(\beta_1+2\beta_2-m(3^c-3)),\beta_2\right)\]
at the origins of the patches $R_c$ and $S_c$ respectively in $X_{c,2}$. Therefore, if $\sigma$ corresponds to a global section of $K_{X_{c,n}}^{\otimes m}$, it must have non-negative vanishing at the origins of $R_c$ and $S_c$ and so 
\begin{equation}\label{alphaineq}\alpha_1\ge 0 \ \text{ and } \ 2\alpha_1+\alpha_2-m(3^c-3)\ge 0.\end{equation}
\begin{equation}\label{betaineq}\beta_2\ge 0 \ \text{ and } \ \beta_1+2\beta_2-m(3^c-3)\ge 0.\end{equation}

If $\sigma$ is of the form $x_1^{a_1}\omega_1\times x_2^{a_2}\omega_2$, then
$(\alpha_1,\alpha_2)=(a_1, 3(3^{c}-3)-2a_2)$ and $(\beta_1,\beta_2)=(m(3^{c}-3)-2a_1,a_2)$. 
Hence after simplification, Equations \eqref{alphaineq} and \eqref{betaineq} yield $a_1=a_2$. 

If $\sigma$ is the form $x_1^{a_1}\omega_1\times x_2^{a_2}y_2\omega_2$, then $(\alpha_1,\alpha_2)=(a_1, 3m(3^{c-1}-1)-2a_2-3^c)$ and $(\beta_1,\beta_2)=3m(3^{c-1}-1)-2a_1,a_2)$. After simplification, Equations \eqref{alphaineq} and \eqref{betaineq} yield $2a_2\ge 2a_2+3^c,$
which is impossible. So no such $\sigma$ can exist.

Finally, if $\sigma$ is of the form $x_1^{a_1}y_1\omega_1\times x_2^{a_2}y_2\omega_2$, then $(\alpha_1,\alpha_2)=(a_1, 3m(3^{c-1}-1)-2a_2-3^c)$ and $(\beta_1,\beta_2)= (3m(3^{c-1}-1)-2a_1-3^c,a_2)$. 
After simplification, Equations \eqref{alphaineq} and \eqref{betaineq} yield
$2a_2\ge 2a_2+2\cdot3^c,$
which is impossible. So no such $\sigma$ can exist.

Therefore the only global sections of $K_{C_g^2}^{\otimes m}$ that can correspond to global sections of $K_{X_{c,2}}^{\otimes m}$ are those of the form $x_1^a\omega_1\times x_2^a\omega_2$
for $0\le a \le m(g-1)$, which finishes the proof of the base case.  

So assume that global sections of $K_{X_{c,n-1}}^{\otimes m}$ correspond to global sections of $K_{C_g^{n-1}}^{\otimes m}$ of the form 
$x_1^a\omega_1\times \cdots \times x_{n-1}^a\omega_{n-1}$
where $0\le a\le m(g-1)$. Letting $Y_0=C_g\times X_{c,n-1}$, it follows  that a global section $\varepsilon$ of $K_{Y_0}^{\otimes m}$ corresponds to a global section of $K_{C_g^n}^{\otimes m}$ of the form 
\[\delta \times x_2^a\omega_2\times \cdots \times x_{n}^a\omega_{n},\]
where $\delta$ is a global section of $K_{C_g}^{\otimes m}$.

We have established that the form $x_{n-1}^a\omega_{n-1}$ has order of vanishing $0$ at $P_1$ and order of vanishing $m(3^{c}-3)-2a$ at $Q$.
By Lemma \ref{localaction} there exist coordinate patches $U_{n-1}$ and $V_{n-1}$ on $\mathrm{Fix}_{X_{c,n-1}}(\phi_{c,n-1})$, with local coordinates $(u_2,\ldots, u_n)$ and $(v_2,\ldots, v_n)$ respectively, on which $\phi_{c,n-1}$ acts with weights $(0,\ldots,0,g)$ and $(0,\ldots,0,1)$ respectively. Moreover the global section of $K_{X_{c,n-1}}^{\otimes m}$ corresponding to the form $x_2^a\omega_2\times \cdots \times x_{n}^a\omega_{n}$ has vanishing sequence at the point $(u_2,\ldots, u_n)=(0,\ldots,0)$ of the form 
\[(\gamma_2,\ldots, \gamma_{n-2}, a)\]
for some non-negative $\gamma_2,\ldots, \gamma_{n-1}\in \mathbb{Z}$, and vanishing sequence at the point 
$(v_2,\ldots, v_n)=(0,\ldots,0)$  of the form 
\[(\lambda_2,\ldots,\lambda_{n-2}, m(3^{c}-3)-2a)\]
for some non-negative $\lambda_2,\ldots, \lambda_{n-1}\in \mathbb{Z}$. 

It follows that $\eta_c\coloneqq \psi_g^{-1}\times \phi_{c,n-1}$ has $\mathbb{Z}/3^c\mathbb{Z}$-weights on $R_0\coloneqq U_1\times V_{n-1}$ and $S_0\coloneqq V_1\times U_{n-1}$ given by $(-1,0,\ldots,0,g)$ and $(-g,0,\ldots,1)$ respectively. 

Let $\alpha_1$ be the order of vanishing of the form $\delta$ at the point $P_1$ in $U_1$ and let $\beta_1$ be the order of vanishing of $\delta$ at the point $Q$ in $V_1$. Then, the form $\varepsilon$ has vanishing sequence at the point $(0,\ldots,0)$ in $R_0$ given by 
\[(\alpha_1,\lambda_2,\ldots,\lambda_{n-2}, m(3^{c}-3)-2a)\]
and vanishing sequence at the point $(0,\ldots,0)$ in $S_0$ given by 
\[(\beta_1,\gamma_2,\ldots, \gamma_{n-2}, a)\]

Suppose the form $\varepsilon$ corresponds to a global section of $K_{X_{c,n}}^{\otimes m}$. Then the vanishing of $\varepsilon$ on the patches $R_c$ and $S_c$ defined in Section \ref{vanishing section} must be non-negative. So by Equations \eqref{vanishing R_c} and \eqref{vanishing S_c}:
\begin{equation}\label{alpha ineq}
\frac{1}{3^c}(2\alpha_1-2a)\ge 0
\end{equation}
\begin{equation}\label{beta ineq}
\frac{1}{3^c}(\beta_1+2a-m(3^c-3))\ge 0
\end{equation}

Now by Theorem \ref{global}, the form $\delta$ is either of the form $x^b\omega$, where $0\le b\le m(g-1)$ or of the form $x^by\omega$, where $0\le b \le (m-1)(g-1)-2$. 

In the first case, namely when $\delta$ is of the form $x^b\omega$, we have $\alpha_1=a$ and $\beta_1=m(3^{c}-3)-2b.$ Hence Equations \eqref{alpha ineq} and \eqref{beta ineq} yield after simplification the condition $a=b$. 

In the second case, namely when $\delta$ is of the form $x^by\omega$, we have $\alpha_1=a$ and $\beta_1=m(3^{c}-3)-2b-3^c.$ Equations \eqref{alpha ineq} and \eqref{beta ineq} then yield the conditions $b\ge a$ and $2a-2b-3^c\ge0$, which is impossible. 

Therefore, as desired, we have shown that if $\varepsilon$ is a global section of $K_{C_g^n}^{\otimes m}$ which descends to a global section of $K_{X_{c,n}}^{\otimes m}$, then $\varepsilon$ must be of the form $x_1^a\omega_1\times x_2^a\omega_2\times \cdots x_n^a\omega_n$ for $0\le a\le m(g-1)$.
\end{proof}

\section{The Iitaka Fibration of $C_{g}^n/G$}\label{iitaka section}
Consider the Iitaka fibration of the quotient variety $C_{g}^n/G$
\[f\colon C_{g}^n/G\dashrightarrow \mathbb{P}(H^0(C_{g}^n,K_{C_{g}^n}^{\otimes m}))^G,\]
given by sending a point $x$ to its evaluation on a basis of $G$-invariant global sections of $K_{C_{g}^n}^{\otimes m}$. See \cite[Section 2.1.C]{positivity} for general facts on the Iitaka fibration of a normal projective variety. 
By Proposition \ref{koddim}, the variety $C_{g}^n/G$ and hence $X_{c,n}$ has Kodaira dimension $1$, thus the image of $f$ is a curve.  Note that after passing to a resolution $\tilde{f}\colon Z_{c,n}\rightarrow \mathbb{P}(H^0(Z_{c,n},K_{Z_{c,n}}^{\otimes m}))$ for $m$ sufficiently divisible, the smooth fibers of $\tilde{f}$ have Kodaira dimension $0$ (see \cite[Theorem 2.1.33]{positivity}).

\begin{prop}\label{fibration}
For any $c\ge 2$, $n\ge 2$, the rational map $f\colon C_{g}^n/G\dashrightarrow \mathbb{P}(H^0(C_{g}^n,K_{C_{g}^n}^{\otimes m}))^G$ has image $\mathbb{P}^1$. Moreover $f$ has reducible singular fibers above the points $0$ and $\infty$ and has singular fibers with an isolated singular point above the  $3^c$ roots of $t^{3^c}-(-1)^{n}$. 
 \end{prop}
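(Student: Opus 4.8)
The plan is to make the Iitaka map completely explicit using Proposition \ref{koddim}, and then to read off the three fiber-theoretic claims from the geometry of the single function $F=\prod_{i=1}^n x_i$ on $C_g^n$.

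First I would pin down the map. By Proposition \ref{koddim} the space $H^0(C_g^n,K_{C_g^n}^{\otimes m})^G$ has basis $s_a=x_1^a\omega_1\times\cdots\times x_n^a\omega_n$ for $0\le a\le m(g-1)$. Since $s_a/s_0=(x_1\cdots x_n)^a$, setting $t=x_1\cdots x_n$ the map $f$ takes the form $[s_0:\cdots:s_{m(g-1)}]=[1:t:\cdots:t^{m(g-1)}]$, whose image is the degree-$m(g-1)$ rational normal curve, hence $\cong\mathbb{P}^1$. The function $t=\prod_i x_i$ is $G$-invariant, since $G$ scales $x_i$ by $\zeta^{a_i}$ with $\sum a_i\equiv 0\bmod 3^c$, and it visibly surjects onto $\mathbb{P}^1$ (fix $x_2,\dots,x_n$ generic and vary $x_1$). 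Thus $f$ descends to the fibration $C_g^n/G\to\mathbb{P}^1$, $(x_1,\dots,x_n)\mapsto\prod_i x_i$, proving the image is $\mathbb{P}^1$. For the fibers over $0$ and $\infty$, note that the fiber of $F$ over $0$ is $\bigcup_{i=1}^n\{x_i=0\}$ and over $\infty$ is $\bigcup_{i=1}^n\{x_i=\infty\}$, a union of coordinate walls; because $G$ acts diagonally without permuting the factors and preserves each wall, these walls stay distinct in the quotient, so both fibers are reducible.

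Next I would locate the remaining singular fibers via a critical-point computation for $F$ over $t_0\in\mathbb{C}^*$. On the locus where all $x_i$ are finite and nonzero one has $dF/F=\sum_i dx_i/x_i$, and since the $i$-th summand lives in $T^*_{p_i}C_g$, the condition $dF_p=0$ forces $dx_i=0$ at each $p_i$, i.e.\ each $p_i$ is a ramification point of $x\colon C_g\to\mathbb{P}^1$ with finite nonzero $x$-coordinate, namely a Weierstrass point $y_i=0$, $x_i=\beta_i$ with $\beta_i^{3^c}=-1$. Then the critical value is $t_0=\prod_i\beta_i$, which satisfies $t_0^{3^c}=(-1)^n$; conversely every root of $t^{3^c}-(-1)^n$ arises, with exactly $(3^c)^{n-1}$ critical points above it (choose $\beta_1,\dots,\beta_{n-1}$ freely and $\beta_n$ is determined). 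Writing $x_i-\beta_i=-\tfrac{\beta_i}{3^c}y_i^2+\cdots$ from $y_i^2=x_i^{3^c}+1$, one gets
\[
F-t_0=-\frac{t_0}{3^c}\sum_{i=1}^n y_i^2+(\text{higher order}),
\]
a nondegenerate quadratic form, so each critical point is a node of the fiber. This also shows the fibers over all other $t_0\ne0,\infty$ are smooth.

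Finally I would pass to the quotient. The fiber over a finite nonzero $t_0$ avoids $\mathrm{Fix}(G)$ entirely, since every $G$-fixed point has some $x_i\in\{0,\infty\}$ and hence lies over $t=0$ or $\infty$; so $C_g^n\to C_g^n/G$ is étale along this fiber and carries the node structure faithfully. On the critical points $G$ acts freely, because a stabilizing element would need $\zeta^{a_i}\beta_i=\beta_i$ forcing all $a_i\equiv0$, and transitively, because for two critical points $(\beta_i),(\beta_i')$ the ratios $\beta_i'/\beta_i=\zeta^{a_i}$ satisfy $\sum a_i\equiv0$ (as $\prod\beta_i=\prod\beta_i'=t_0$), giving an element $\psi_g^{a_1}\times\cdots\times\psi_g^{a_n}\in G$ moving one to the other. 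Hence the $(3^c)^{n-1}$ nodes collapse to a single node in the quotient fiber, an isolated singular point. I expect this quotient analysis to be the crux: the freeness and transitivity of the $G$-action on the critical locus, together with the disjointness of the fiber from $\mathrm{Fix}(G)$, are exactly what guarantee that the local node survives intact and that precisely one singular point remains; the local quadratic computation is routine but is what certifies the singularity is an isolated node rather than something worse.
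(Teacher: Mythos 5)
Your proposal is correct and follows essentially the same route as the paper: identifying the map with $[1:t:\cdots:t^{m(g-1)}]$ for $t=\prod_i x_i$ via the basis $s_a$, reading off reducibility over $0$ and $\infty$ from the coordinate walls, and locating the remaining singular fibers at the Weierstrass points $y_i=0$ with $t^{3^c}=(-1)^n$, then using the transitive $G$-action to collapse the critical points to a single singular point. Your logarithmic-derivative computation and the explicit verification of freeness and of the nondegenerate quadratic (node) structure are clean refinements of the paper's Jacobian argument, but not a different approach.
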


\begin{proof}
We have the following diagram:
\begin{equation}\label{maindiagram}
\begin{tikzcd}
 C_{g}^n/G\arrow[r,dashrightarrow]{f}& \mathbb{P}(H^0( C_{g}^n,K_{ C_{g}^n}^{\otimes m}))^G\\
 C_g^n \uar[-,rightarrow]\arrow{r}{f'} &\mathbb{P}(H^0(C_g^n,K_{C_g^n}^{\otimes m}))\uar[-,dashrightarrow],
 \end{tikzcd}
\end{equation}
where the horizontal map $f'$ is the Iitaka fibration for $C_g^n$. 

Consider the composition obtained from the above diagram
\[\alpha\colon C_g^n \dashrightarrow \mathbb{P}(H^0(C_{g}^n,K_{ C_{g}^n}^{\otimes m}))^G.\]

Recall from Theorem \ref{global} that global sections of $K_{C_g}^{\otimes m}$ are of the form 
$x^a\omega$  for  $0\le a\le m(g-1).$ Observe that the only points of $C_g$ on which the form $x^a\omega$ can vanish are the points 
 $P_1\colon (x,y)=(0,1)$ and $P_2\colon (x,y)=(0,-1)$ in the coordinate patch on $C_g$ given by $\{y^2=x^{2g+1}+1\}$ and the point 
 $Q:(u,v)=(0,0)$
 in the coordinate patch given by $\{v^2=u^{2g+2}+u\}$. In fact, we have:

\begin{equation}\label{E1}
x^a\omega(P_1)=
\begin{cases}
dx^m & \mbox{if } a=0\\
0 & \mbox{otherwise},
\end{cases}
\end{equation}
\begin{equation}\label{E2}
x^a\omega(P_2)=
\begin{cases}
(-1)^mdx^m & \mbox{if } a=0\\
0 & \mbox{otherwise},
\end{cases}
\end{equation}
\begin{equation}\label{E3}
x^a\omega(Q)=
\begin{cases}
(-1)^m & \mbox{if } a=m(g-1)\\
0 & \mbox{otherwise}.
\end{cases}
\end{equation}  

Recall from the proof of Proposition \ref{koddim} that $G$-invariant global sections of $K_{ C_{g}^n}^{\otimes m}$ are of the form
\[s_a=x_1^a\omega_1\times \cdots x_n^a\omega_n,\]
for $0\le a \le m(g-1)$. Thus we may view the map $\alpha\colon C_g^n \dashrightarrow \mathbb{P}(H^0(C_{g}^n,K_{ C_{g}^n}^{\otimes m}))^G$ as the rational map sending:
\[(z_1,\ldots, z_n)\mapsto [s_0(z_1,\ldots, z_n)\colon \cdots \colon s_{m(g-1)}(z_1,\ldots, z_n)].\]

Say that $\mathbb{P}(H^0(C_{g}^n,K_{C_{g}^n}^{\otimes m}))^G$ has coordinates $[w_0\colon\cdots\colon w_{m(g-1)}]$. Then on the affine patch of 
$\mathbb{P}(H^0(C_{g}^n,K_{C_{g}^n}^{\otimes m}))^G$ given by $w_0\ne0$, the image of $\alpha$ is of the form 
$(t,t^2,\ldots, t^{m(g-1)}),$ where $t=x_1(z_1)\cdots x_n(z_n)$. The images on the other affine patches of $\mathbb{P}(H^0(C_{g}^n,K_{C_{g}^n}^{\otimes m}))^G$ take similar forms. Hence the image of $\alpha\colon C_g^n \dashrightarrow \mathbb{P}(H^0(C_{g}^n,K_{C_{g}^n}^{\otimes m}))^G$ is the rational curve $\mathbb{P}^1$ from which it follows that the image of $f\colon C_{g}^n\rightarrow \mathbb{P}(H^0(C_{g}^n,K_{C_{g}^n}^{\otimes m}))^G$ is $\mathbb{P}^1$ as well.

Consider the codimension $1$ subvarieties of $C_g^n$ of the form $C_g^{n-1} \times P_{i}$ for $i\in \{1,2\}$ up to permutation of factors. Observe from \eqref{E1}, \eqref{E2}, and \eqref{E3} that $\alpha$ sends the open subset $(C_g-Q)^{n-1}\times P_i$ of such a subvariety to the point $[1\colon0\colon\cdots\colon0]$ in $\mathbb{P}(H^0(C_g^n,K_{C_g^n}^{\otimes m}))^G$, which corresponds to the point $[1\colon 0]$ in $\mathbb{P}^1$.  Moreover for the different permutations of the position of the $P_i$, the action of the group $G$ on $C_g^n$ does not identify these various open subvarieties. 
Namely, the fiber of $f$ above the point $[1\colon 0]$ contains all $2n$ of these open subvarieties.  In particular, the fiber of $[1\colon 0]$ is singular and reducible. 

Similarly, consider the $n$ subvarieties of $C_g^n$ of the form $C_g^{n-1}\times Q$. Then $\alpha$ sends the open subsets $(C_g-\{P_1,P_2\})^{n-1}\times Q$ of these subvarieties to the point $[0\colon 1]$ in $\mathbb{P}^1$ and since the action of the group $G$ does not identify these open subsets, they all lie in the fiber of $[0\colon 1]$ under $f$. In particular, the fiber of $[0\colon1]$ is singular and reducible.

Now let us consider the fibers of $\alpha$ away from the points $[1\colon 0]$ and $[0\colon 1]$ in $\mathbb{P}^1$. Note that away from these two points, the image of $\alpha$ is given by points $(t,t^2,\ldots, t^{m(g-1)}),$ where $t=x_1(z_1)\cdots x_n(z_n)$ is not equal to zero. Hence 
the fibers of $\alpha$ away from $[1\colon 0]$ and $[0\colon 1]$ are then the $G$-invariant hypersurfaces $F_t$ in $C_g^n$ given by $x_1\cdots x_n=t$. In other words, these fibers are 
defined by the affine equations in $(\mathbb{A}^2)^n$:
\begin{equation}\label{F equation}(y_1^2=x_1^{2g+1}+1,\  y_2^2=x_2^{2g+1}+1,\ \ldots,\  y_n^2=x_n^{2g+1}+1, \ x_1\cdots x_n=t),\end{equation}
where we are assuming $t\ne0$. 

On the affine patch we have described, such fibers $F_t$ have Jacobian:
\begin{equation*}
\left(\begin{array}{ccccccc}
(2g+1)x_1^{2g} &2y_1 & 0&0&\cdots &0&0\\
0&0&(2g+1)x_2^{2g} &2y_2&\cdots &0&0\\
0&0&0&0&\cdots &(2g+1)x_n^{2g} &2y_n \\
x_2\cdots x_n&0&x_1x_3\cdots x_n&0&\cdots &x_1\cdots x_{n-1}&0
\end{array}\right)
\end{equation*}

Hence the fiber $F_t$ is singular if $y_1=\cdots=y_n=0$. When this is the case, then for each $i=1,\ldots, n$ we have that $x_i$ satisfies the equation $x_i^{2g+1}+1=0$, namely $x_i$ is of the form $\xi^{2\gamma_i+1}$, where $\xi$ is a primitive $2\cdot 3^c$-th root of unity and $0\le \gamma_i\le 3^c-1$. Namely we have 
\[t=\xi^{\left(2\left(\sum_{i=1}^n \gamma_i\right)+m\right)}\]
and so 
\[t^{3^c}=\xi^{n\cdot 3^c}=(-1)^n.\]

In other words, it $t\in \mathbb{C}^*$ is such that $t^{3^c}=(-1)^n$, then the fiber $F_t$ is singular and has singularities at the points of the form
\[((x_1,y_1),\ldots, (x_n,y_n))=((\xi^{2\gamma_1+1},0),\ldots, (\xi^{2\gamma_n+1},0)).\]

Note that since $\zeta$ is an even power of $\xi$, the action of the group $G$ permutes these singular points on $F_t$. Namely, in the image of $F_t$ in $C_g^n/G$ these points are all identified to a single singular point. 

Note that if we consider some other affine patch of the fiber $F_t$, we substitute in \eqref{F equation} equations of the form $v_i^2=u_i^{2g+2}+u_i$ and then the equation $x_1\cdots x_n=t$ becomes $\frac{x_{i_1}\cdots x_{i_k}}{u_{j_1}\cdots u_{j_\ell}}=t$. Considering the Jacobian as above yields that the fiber $F_t$ is singular if $y_{i_1}=\cdots=y_{i_k}=v_{j_1}=\cdots=v_{j_\ell}=0$. Since $\frac{x_{i_1}\cdots x_{i_k}}{u_{j_1}\cdots u_{j_\ell}}=t$, we know that none of the $u_{j_r}$ can be equal to zero. Hence the points described by $y_{i_1}=\cdots=y_{i_k}=v_{j_1}=\cdots=v_{j_\ell}=0$ are in fact the same points as those described by just $y_1=\cdots=y_n=0$. Since we have already shown that these points are identified to a single singular point in the image of $F_t$ in $C_g^n/G$, it follows that the image of $F_t$ in $C_g^n/G$ has a single singular point. 
\end{proof}

\section{The elliptic surface case}\label{surface section}
We now focus our attention on the case when $n=2$. In this case, for $m$ sufficiently divisible, the rational map $f\colon C_{g}^2/G\dashrightarrow \mathbb{P}^1\subset \mathbb{P}(H^0(C_{g}^2,K_{C_{g}^2}^{\otimes m}))^G$ studied in Proposition \ref{fibration}, can be resolved to yield an elliptic fibration $\tilde{f}\colon Z_{c,2}\rightarrow \mathbb{P}^1$. 
 To better understand this minimal elliptic surface $Z_{c,2}$, we will make use of the fact that minimal resolutions of cyclic quotient singularities are well-understood in dimension $2$. 

The action of $\eta_c=\psi_g^{-1}\times \psi_g$ on the product $C_g\times C_g$ has $9$ fixed points: five of the form $(P_i,P_j)$ or $(Q,Q)$, which we refer to as \textit{Type I fixed points}, and four of the form $(P_i,Q)$ or $(Q,P_i)$, which we refer to as \textit{Type II fixed points}.  We will also refer to these points as the \textit{Type I} and \textit{Type II singular points}, respectively, of the quotient $C_g^2/G=(C_g\times C_g)/\langle \eta_c\rangle$. 

Observe that $\eta_c$ acts around Type I fixed points with $\mathbb{Z}/3^c\mathbb{Z}$-weights $(-1,1)$ in the $(P_i,P_j)$ case and $\mathbb{Z}/3^c\mathbb{Z}$-weights $(-g,g)$ in the $(Q,Q)$ case. Similarly $\eta_c$ acts around Type II fixed points with $\mathbb{Z}/3^c\mathbb{Z}$-weights $(-1,g)$ in the $(P_i,Q)$ case and $\mathbb{Z}/3^c\mathbb{Z}$-weights $(-g,1)$ in the $(Q,P_i)$ case. In particular then, note that the Type II singular points are non-canonical singularities. 


\subsection{Resolving the singular points of $C_g^2/G$}\label{hirzebruchjung section}

To understand the resolutions of the singular points of $C_g^2/G$, we make use of established facts about surface cyclic quotient singularities and Hirzebruch-Jung resolutions.  A brief survey of these can be found in \cite[Section 2.4]{kollar} and more detailed explanations can be found in \cite{reid} and \cite{BHPV}.

Suppose the cyclic group $\mathbb{Z}/r\mathbb{Z}$ acts on $\mathbb{C}^2$ via
$(z_1,z_2) \mapsto (\epsilon z_1, \epsilon^a z_2),$
for some $a$ coprime to $r$, where $\epsilon$ is a primitive $r$-th root of unity. Then the minimal resolution of the singularity at $(0,0)$ in the quotient $\mathbb{C}^2/\mathbb{Z}/r\mathbb{Z}$ is encoded by the continued fraction expansion:
\[\frac{r}{a}=b_0-\frac{1}{b_1-\frac{1}{b_2-\frac{1}{b_3-\frac{1}{\cdots}}}}.\]
More precisely, the minimal resolution of this singularity is a chain of $s+1$ exceptional curves $E_0, E_1,\ldots, E_s$ with nonzero intersection numbers $E_i.E_i=-b_i$ and $E_i.E_{i+1}=1$ \cite[Proposition 2.32]{kollar}. The sequence $[b_0,b_1,b_2,b_3,\ldots b_s]$ is called the \emph{Hirzebruch-Jung expansion} of the singularity. 

Therefore since $\eta_c$ acts around Type I fixed points with $\mathbb{Z}/3^c\mathbb{Z}$-weights $(-1,1)$  and $(-g,g)$, the Type I fixed points have Hirzebruch-Jung expansion
\[\underbrace{[2,\ldots ,2]}_{(3^c-1)-\mathrm{times}}.\]

Thus the Type I singular points of $C_g^2/G$ are DuVal singularities of type $A_{3^c-1}$ whose minimal resolutions consist of a chain of $3^c-1$ rational curves, each with self-intersection $-2$.

Similarly, since the Type II fixed points are acted on by $\eta_c$ with weights $(-1,g)$ and $(-g,1)$, the Type II singular points  of $C_g^2/G$ have Hirzebruch-Jung expansion
$[2,g+1].$ Hence the minimal resolution of each Type II singular point consists of a chain of two rational curves, one denoted $T$ with self-intersection $-2$ and one denoted $S$ with self-intersection $-(g+1)$.

\subsection{Weights and vanishing on the curves $T$ and $S$}\label{S and T phi}

Since the Type II singular points of $C_g^2/G$ are not canonical singularities, they will be of special interest to us in understanding both the geometry of the surface $Z_{c,2}$ and of the threefold $Z_{c,3}$. In this section, we thus pay special attention for use later to the local action of the automorphisms $\eta_c$ and $\phi_{c,2}$ on the images of Type II singular points in $X_{c,2}$ and $Z_{c,2}$.

Without loss of generality, let us consider a Type II point  on $C_g\times C_g$ of the form $P_i\times Q$ for some $i\in \{1,2\}$. As discussed in Sections \ref{phi action section}, this point is covered by a coordinate patch $R_0$ with local coordinates $(z_{0,1},z_{0,2})$ on which the automorphism $\eta_c=\psi_g^{-1}\times \psi_g$ acts with $\mathbb{Z}/3^c\mathbb{Z}$-weights $(-1,g)$ and the automorphism $\mathrm{id}\times \psi_g$ acts with $\mathbb{Z}/3^c\mathbb{Z}$-weights $(0,g)$.

After performing a sequence of two blow-ups along the fixed locus of $\eta_c$, we have that $\eta_c$ acts on the resulting coordinate patches with $\mathbb{Z}/3^c\mathbb{Z}$-weights
\[(-1, g+2)\ \ (-(g+2),g+1)\ \ (-(g+1),0)\ \ (0,g)\]
and hence that $\mathrm{id}\times \psi_g$ acts with corresponding $\mathbb{Z}/3^c\mathbb{Z}$-weights
\[(0,g)\ \ (-g,g)\ \ (-g,2g)\ \ (-2g,g).\]

Note that coordinate patch on which $\eta_c$ acts with weights $(0,g)$ is exactly the coordinate patch $R_0''$ introduced in Section \ref{local weight section}. We showed in that section that the image $R_c$ of $R_0''$ in $Y_c$ is obtained by taking a sequence of $c-1$ quotients by $\mathbb{Z}/3\mathbb{Z}$ since all subsequent blow-up maps will be isomorphisms on this patch. Hence the automorphism $\eta_c$ acts on $R_c$ with weights $(0,g)$ and the automorphism $\mathrm{id}\times \psi_g$ acts on $R_c$ with weights $(-2g,g)=(1,g)$.  Moreover, since a global section $s_a=x_1^a\omega_1\times x_2^a\omega_2$ of $K_{C_g^2}^{\otimes m}$ has vanishing sequence on $P_i\times Q$  given by $(a,m(3^c-3)-2a)$, the form $s_a$ has vanishing sequence at the origin of $R_c$ given by $(a, 0)$.

Now consider the coordinate patch $T_0''$ of $Y_0''$ on which $\eta_c$ acts with weights $(-1,g+2)=(-1,\frac{3^c+3}{2})$. Let $T_1$ denote the image of this patch after taking the $\mathbb{Z}/3\mathbb{Z}$-quotient needed to pass from $Y_0''$ to $Y_1$. Then $\eta_c$ acts on $T_1$ with weights $\left(-1, \frac{3^{c-1}+1}{2}\right)$.  Suppose that the sequence of two blow-ups needed to obtain $Y_0''$ from $Y_0$ introduced new local coordinates $[z_{0,1}': z_{0,2}']$ followed by $[z_{0,1}'': z_{0,2}'']$ such that the $T_0''$ patch is given by $z_{0,1}'\ne 0$ followed by $z_{0,1}''\ne 0$. 

Inductively, if the sequence of blowups $Y_i''\rightarrow Y_i'\rightarrow Y_i$ introduces new local coordinates $[z_{i,1}':z_{i,2}']$ and $[z_{i,1}'':z_{i,2}'']$, define $T_i$ to be the image in $Y_i$ of the $z_{i-1,1}'\ne 0$ patch followed by the  $z_{i-1,1}''\ne 0$ patch. Suppose $\eta_c$ acts with weights $\left(-1, \frac{3^{c-i}+1}{2}\right)$ on $T_i$. Then after one blow-up $\eta_c$ acts with weights $\left(-1, \frac{3^{c-i}+3}{2}\right)$ on the $z_{i,1}'\ne0$ patch. Since $-\frac{3^{c-i}+3}{2}$ is divisible by $3$, the blow-up $Y_i''\rightarrow Y_i'$ is an isomorphism on the $z_{i,1}'\ne0$ patch, and thus $\eta_c$ indeed acts with weights $\left(-1, \frac{3^{c-(i+1)}+1}{2}\right)$ on $T_{i+1}$. 

Hence by induction, we have that indeed $\eta_c$ acts with weights $\left(-1, \frac{3^{c-i}+1}{2}\right)$ on $T_i$ for each $1\le i\le c$ and that to pass from $T_i$ to $T_{i+1}$ we perform one non-trivial blow-up followed by one $\mathbb{Z}/3\mathbb{Z}$ quotient. In particular, the automorphism $\eta_c$ acts with weights $(-1, 1)$ on the patch $T_c$ in $Y_c=X_{c,2}$ and the automorphism $\mathrm{id}\times \psi_g$ acts with weights $(0,g)$ on $T_c$. 

Using the inductive construction of the coordinate patch $T_c$ together with Equations \eqref{quotienteq} and \eqref{blowupeq}, we deduce that a global section of $K_{C_g^2}^{\otimes m}$ with vanishing sequence $(\alpha_1, \alpha_2)$ on $P_i\times Q$ has vanishing sequence on the origin of $T_c$ given by
\[\left
(
\frac{1}{3}\big( \cdots \big( \frac{1}{3}\big(\frac{1}{3}(\alpha_1+2\alpha_2)+\alpha_2-m\big)\cdots+\alpha_2-m\big),\alpha_2\right)=\left(\frac{1}{3^c}\big(\alpha_1+\frac{3^c+1}{2}\alpha_2-m\frac{3^c-3}{2}\big),\alpha_2\right).\]
In particular, the form $s_a$ has vanishing sequence on the origin of $T_c$ given by 
\[\left(\frac{1}{2}\big(m(3^c-3)-2a\big),m(3^c-3)-2a\right).\]

Now by construction of $Y_c=X_{c,2}$, the image of the point  $P_i\times Q$  in $X_{c,2}$ consists of a chain of rational curves, one extreme end of which is covered by the patch $R_c$ and the other extreme end of which is covered by the patch $T_c$. Passing from $X_{c,2}$ to the minimal surface $Z_{c,2}$ involves contracting all but the two extremal curves in the chain, which become the curves $T$ and $S$ in $Z_{c,2}$. 

This chain of $2$ curves, the curve $T$ followed by the curve $S$, is then covered by three coordinate patches: the patch $T_c$, a new patch $W_c$, and then the patch $R_c$. Hence our calculations yield that the automorphism $\eta_c$ acts respectively on these patches with local $\mathbb{Z}/3^c\mathbb{Z}$ weights 
$(-1,1)$, $(-1, 0)$, and $(0,g)$
and that the automorphism  $\phi_{c,2}=\mathrm{id}\times \psi_g$ acts respectively with weights 
\begin{equation}\label{phi weights}
(0,g),\ (-g, -1),\   \text{ and }(1,g).
\end{equation} 
Moreover, our calculations together with  Equation \eqref{blowupeq} yield that the form $s_a$ has vanishing sequence at the origins of the three coordinate patches $T_c,$ $W_c$, and $R_c$ given respectively by
\begin{equation}\label{curvevanishing}
\left(\frac{1}{2}\big(m(3^c-3)-2a\big),m(3^c-3)-2a\right),\  \left(0,\frac{1}{2}\big(m(3^c-3)-2a\big)\right),\   \text{ and }(a,0).
\end{equation}

\subsection{The canonical bundle $K_{Z_{c,2}}$}
From the above calculations we observe the following about the canonical bundle $K_{Z_{c,2}}$ of the minimal surface $Z_{c,2}$. 
\begin{prop}\label{canonicalbundle} For $c\ge 2$, the canonical bundle $K_{Z_{c,2}}$ is basepoint free. 
\end{prop}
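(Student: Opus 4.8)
The plan is to show that $K_{Z_{c,2}}$ is basepoint free by exhibiting, for a suitable sufficiently divisible $m$, a collection of global sections of $K_{Z_{c,2}}^{\otimes m}$ with no common zero, and then deducing the statement for $K_{Z_{c,2}}$ itself. Since $Z_{c,2}$ is the minimal model of $X_{c,2}$ and we have already computed in Proposition \ref{koddim} that the only global sections of $K_{C_g^2}^{\otimes m}$ descending to $X_{c,2}$ are the forms $s_a = x_1^a\omega_1 \times x_2^a\omega_2$ for $0 \le a \le m(g-1)$, these $s_a$ provide an explicit basis for $H^0(Z_{c,2}, K_{Z_{c,2}}^{\otimes m})$. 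The heart of the argument is to verify that at every point of $Z_{c,2}$ at least one $s_a$ is nonvanishing.

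First I would stratify $Z_{c,2}$ according to where potential base points could occur. Away from the images of the singular points of $C_g^2/G$, the surface is unchanged from the generic behavior of $C_g^2$, and the computation in Proposition \ref{fibration} via equations \eqref{E1}, \eqref{E2}, \eqref{E3} shows that the $s_a$ already separate fibers and in particular have no common zero on the open part. The only places requiring care are the chains of exceptional curves introduced when resolving the Type I and Type II singularities, analyzed in Sections \ref{hirzebruchjung section} and \ref{S and T phi}. For the Type I ($A_{3^c-1}$) singularities, which are canonical, the forms $s_a$ descend with controlled vanishing, and I would check directly that the vanishing orders do not all simultaneously reach the exceptional curves. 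For the Type II singularities, the crucial input is already assembled: equation \eqref{curvevanishing} records the vanishing sequence of $s_a$ at the origins of the three patches $T_c$, $W_c$, $R_c$ covering the chain $T, S$ in $Z_{c,2}$.

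The key computation is then to read off from \eqref{curvevanishing} that as $a$ ranges over $0 \le a \le m(g-1)$, the collected vanishing orders of the $s_a$ along each of $T$ and $S$ have minimum zero, so that the base locus of $|K_{Z_{c,2}}^{\otimes m}|$ meets neither curve. Concretely, taking $a = m(g-1) = \tfrac{1}{2}m(3^c-3)$ kills the $T_c$- and $W_c$-contributions, giving vanishing sequence $(0,0)$ at the $T_c$ origin and a zero in the first coordinate at $W_c$, while taking $a=0$ gives vanishing $(0,0)$ at the $R_c$ origin; comparing these shows no point of the chain is a common zero. Having shown $K_{Z_{c,2}}^{\otimes m}$ is basepoint free, I would conclude for $K_{Z_{c,2}}$ itself: on the minimal elliptic surface $Z_{c,2}$ the canonical bundle is (by the canonical bundle formula for an elliptic surface, or equivalently since $Z_{c,2}$ is minimal of Kodaira dimension $1$) a pullback $\tilde f^* L$ of a line bundle $L$ on $\mathbb{P}^1$, so $K_{Z_{c,2}}$ is basepoint free precisely when $L$ is effective and globally generated on $\mathbb{P}^1$, which follows from the $P_1 = g \ge 4$ sections computed in Proposition \ref{koddim} together with the fiberwise-constant nature of canonical sections.

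The main obstacle I anticipate is the Type II locus, since those singularities are non-canonical, so the naive descent of pluricanonical forms picks up the correction terms visible in \eqref{curvevanishing}; one must confirm that the divisorial part of $K_{Z_{c,2}}$ supported on the curves $T$ and $S$ is genuinely zero (i.e.\ that the chain contributes no fixed component), rather than merely that individual sections vanish to low order. I would address this by combining the explicit vanishing sequences in \eqref{curvevanishing} with the self-intersection data $T^2 = -2$, $S^2 = -(g+1)$ from Section \ref{hirzebruchjung section} to compute $K_{Z_{c,2}} \cdot T$ and $K_{Z_{c,2}} \cdot S$ via adjunction, verifying these intersection numbers are consistent with $K_{Z_{c,2}}$ being a nonnegative multiple of fibers and hence nef and basepoint free.
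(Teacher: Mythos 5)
Your core computation is the paper's: the common zero locus of the $s_a$ on $C_g^2$ is exactly the four Type~II points, so everything reduces to the chains $T\cup S$, and Equation \eqref{curvevanishing} shows that $s_{m(g-1)}$ is nonvanishing at the origins of $T_c$ and $W_c$ while $s_0$ is nonvanishing at the origin of $R_c$, so the chain carries no base point. The problem is the wrapper you put around this computation. You prove basepoint freeness of $K_{Z_{c,2}}^{\otimes m}$ for a ``suitable sufficiently divisible $m$'' and then try to descend to $K_{Z_{c,2}}$ itself. That descent is not free: basepoint freeness of a power of a line bundle does not imply basepoint freeness of the bundle (already $\mathcal{O}(p)$ on an elliptic curve fails this), so the last step carries all the weight. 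Your proposed fix --- write $K_{Z_{c,2}}=\tilde f^*L$ via the canonical bundle formula and check $L$ is globally generated on $\mathbb{P}^1$ --- is only valid if the elliptic fibration has no multiple fibers; with a multiple fiber $m_iF_i$, every section of $K_{Z_{c,2}}$ vanishes along $(m_i-1)F_i$ and the canonical system acquires a fixed component. In this paper the absence of multiple fibers is obtained from the existence of the sections $S_j$, which is proved in Proposition \ref{sing} --- and that proposition analyzes the morphism $h$ whose existence rests on Proposition \ref{canonicalbundle}. So as written, your closing step is either circular or missing an independent argument ruling out multiple fibers.

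The repair is to delete the detour entirely: nothing in your argument requires $m$ to be large or divisible. Theorem \ref{global} and the proof of Proposition \ref{koddim} give the explicit basis $s_a=x_1^a\omega_1\times x_2^a\omega_2$, $0\le a\le g-1$, of $H^0(Z_{c,2},K_{Z_{c,2}})$ already at $m=1$, and Equation \eqref{curvevanishing} is stated for all $m\ge 1$. Setting $m=1$ and $a=g-1=\tfrac{1}{2}(3^c-3)$ gives vanishing sequence $(0,0)$ at the origins of both $T_c$ and $W_c$, and $a=0$ gives $(0,0)$ at the origin of $R_c$; together with the observation that away from the Type~II points the map $C_g^2\dashrightarrow Z_{c,2}$ either is a local isomorphism or resolves a canonical ($A_{3^c-1}$) singularity crepantly, so that nonvanishing of some $s_a$ is preserved, this proves the statement for $K_{Z_{c,2}}$ directly, which is exactly what the paper does. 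Your adjunction cross-check of $K_{Z_{c,2}}\cdot T$ and $K_{Z_{c,2}}\cdot S$ is a reasonable sanity test but is not needed once the vanishing orders along each component are seen to achieve zero.
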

\begin{proof}
Consider the rational map $h\colon Z_{c,2}\dashrightarrow \mathbb{P}(H^0(Z_{c,2},K_{Z_{c,2}}))$ induced by the canonical bundle $K_{Z_{c,2}}$. Observe that $h$ fits into a diagram
\begin{equation}
\begin{tikzcd}
Z_{c,2}\rar[-,dashrightarrow]{h}&\mathbb{P}(H^0(Z_{c,2},K_{Z_{c,2}}))\\
C_g^2\uar[-,dashrightarrow]\rar[-,dashrightarrow] &\mathbb{P}(H^0(C_g^2,K_{C_g^2}))\uar[-,dashrightarrow],
\end{tikzcd}
\end{equation}
where the horizontal maps are given by evaluation on forms $s_a\coloneqq x_1^a\omega_1\times x_2^a\omega_2$ for $0\le a\le g-1$ and the rational vertical map on the left is the sequence of blow-ups, blow-downs, and quotients needed to obtain $X_{c,2}$ from $C_g^2$ followed by the birational map $ X_{c,2}\rightarrow Z_{c,2}$. 

Observe that the points of $C_g^2$ on which all the $s_a$ vanish are exactly the Type II fixed points. Thus to prove that $K_{Z_{c,2}}$ is basepoint free, we just need to ensure that not all of the $s_a$ vanish on the image in $Z_{c,2}$ of a Type II fixed point. But the image in  $Z_{c,2}$ of a Type II fixed point is the pair of curves $T$ and $S$ covered by the three coordinate patches $T_c,$ $W_c$, and $R_c$ defined in Section \ref{S and T phi}. So the result follows from Equation \eqref{curvevanishing}.
\end{proof}

\subsection{The elliptic fibration $h\colon Z_{c,2}\rightarrow \mathbb{P}^1$}
It follows from Proposition \ref{canonicalbundle} that the Iitaka fibration $\tilde{f}\colon Z_{c,2}\rightarrow \mathbb{P}^1\subset  \mathbb{P}(H^0(Z_{c,2},K_{Z_{c,2}}^{\otimes m}))$ obtained by resolving the rational map $f\colon C_g^2/G\dashrightarrow \mathbb{P}^1\subset  \mathbb{P}(H^0(C_g^2,K_{C_g^2}^{\otimes m}))^G$ studied in Proposition \ref{fibration} may be obtained by letting $m=1$. In this case, we just obtain the map
$h\colon Z_{c,2}\rightarrow \mathbb{P}(H^0(Z_{c,2},K_{Z_{c,2}}))$ from Proposition \ref{canonicalbundle}. In Proposition \ref{sing} below, we study in detail the geometry of this elliptic fibration, which we illustrate in Figure \ref{ellipticfigure}. 

We make use of Kodaira's classification, in \cite{kodaira} and \cite{kodaira2}, of the possible singular fibers of an elliptic surface. For a survey of the possible fiber types, see \cite[I.4]{miranda} and \cite[Section 4]{elsur}.

As we will see, the two kinds of singular fibers that appear in the fibration $h$ are singular fibers of type $I_b$ for $b>0$ and singular fibers of type $I_b^*$ for $b\ge 0$. Singular fibers of type $I_b$ consist of $b$ smooth rational curves meeting in a cycle, namely meeting with dual graph the affine Dynkin diagram $\tilde{A}_b$. Singular fibers of type $I_b^*$ consist of $b+5$ smooth rational curves meeting with dual graph the affine Dynkin diagram $\tilde{D}_{b+4}$.

Recall that each Type II singularity in the quotient $C_g^2/G$ yields two rational curves $T$ and $S$ in $Z_{c,2}$, where $T$ has self-intersection $-2$ and $S$ has self-intersection $-(g+1)$. Let $\delta_1=Q\times P_1$, $\delta_2=Q\times P_2$, $\delta_3=P_1\times Q$, and $\delta_4=P_2\times Q$ denote these four Type II singular points and let $S_1$, $S_2$, $S_3$, and $S_4$ denote each of their respective $-(g+1)$-curves in $Z_{c,2}$ and $T_1$, $T_2$, $T_3$, and $T_4$ their respective $(-2)$-curves in $Z_{c,2}$. 

In the dimension $2$ case, Proposition \ref{fibration} then yields:

\begin{prop}\label{sing}For $c\ge 2$, the elliptic surface $h\colon Z_{c,2}\rightarrow \mathbb{P}^1$ has $3^c+2$ singular fibers: one of type $I_{4\cdot3^c}$ located at $0$, one of type $I_{3^c}^*$ located at $\infty$, and the remaining $3^c$ of type $I_1$ and located at the points $\zeta^i$, for $\zeta$ a primitive $3^c$-th root of unity.  Additionally, each of the rational curves $S_1$, $S_2$, $S_3$, and $S_4$ coming from the resolution of a Type II singular point corresponds to a section of $\tilde{f}$. 
\end{prop}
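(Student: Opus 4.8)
The plan is to determine each singular fiber of $h\colon Z_{c,2}\to\mathbb{P}^1$ by combining the description of the fibers $F_t$ from Proposition \ref{fibration} with the explicit resolution data of Sections \ref{hirzebruchjung section} and \ref{S and T phi}, and then matching each resulting configuration of rational curves against Kodaira's classification. First I would record that, since the smooth fibers of the Iitaka fibration have Kodaira dimension $0$, the generic fiber is a smooth genus-$1$ curve, and that by Proposition \ref{fibration} the only singular fibers lie over $0$, over $\infty$, and over the $3^c$ roots $\zeta^i$ of $t^{3^c}=1$. It then remains to identify the three fiber types and to prove the section claim.

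For the fiber over $0$, I would use that it is the image in $Z_{c,2}$ of the four curves $C_g\times P_1$, $C_g\times P_2$, $P_1\times C_g$, $P_2\times C_g$, on each of which $\eta_c$ acts through a single $\psi_g$-factor, so each maps to $C_g/\langle\psi_g\rangle\cong\mathbb{P}^1$ and, by Proposition \ref{fibration}, they are not identified by $G$. These four rational curves meet precisely at the four Type I points $(P_i,P_j)$, with incidence the $4$-cycle $\overline{C_g\times P_1}-\overline{P_1\times C_g}-\overline{C_g\times P_2}-\overline{P_2\times C_g}$. By Section \ref{hirzebruchjung section} each $(P_i,P_j)$ is an $A_{3^c-1}$ singularity whose minimal resolution inserts a chain of $3^c-1$ rational $(-2)$-curves at the corresponding node of this cycle. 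Hence the fiber over $0$ is a cycle of $4+4(3^c-1)=4\cdot 3^c$ rational curves, namely Kodaira type $I_{4\cdot 3^c}$.

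For the fiber over $\infty$ I would argue analogously, but now the two curves $C_g\times Q$ and $Q\times C_g$ (again each $\cong\mathbb{P}^1$) meet only at the Type I point $(Q,Q)$, whose $A_{3^c-1}$ resolution inserts a chain of $3^c-1$ curves; together these form a central chain of $3^c+1$ curves. Each of the two curves passes in addition through two Type II points $\delta_i$, and here the non-canonical resolution of Section \ref{S and T phi} is essential: I would show that of the two curves $T_i$ (self-intersection $-2$) and $S_i$ (self-intersection $-(g+1)$) resolving $\delta_i$, the curve $T_i$ is vertical and attaches as a fork to the relevant end of the central chain, while $S_i$ is horizontal. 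Concretely, evaluating the rational function $t=s_1/s_0$ defining $h$ on the patches $T_c$, $W_c$, $R_c$ via the vanishing sequences \eqref{curvevanishing} (and the weights \eqref{phi weights}) shows that $t\equiv\infty$ along each $T_i$ but restricts to a non-constant function, in fact a local parameter, along each $S_i$. This simultaneously exhibits $T_1,\dots,T_4$ as the four multiplicity-$1$ tips of a $\tilde D_{3^c+4}$ configuration — so the fiber is of type $I^*_{3^c}$ — and shows each $S_i$ maps isomorphically onto $\mathbb{P}^1$ with $S_i\cdot F=1$, i.e.\ is a section meeting the $\infty$-fiber at the point $S_i\cap T_i$.

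Finally, over each root $\zeta^i$ of $t^{3^c}=1$, Proposition \ref{fibration} gives a fiber with a single singular point. A local computation in the coordinates $(y_1,y_2)$ near $y_1=y_2=0$ shows that $x_1x_2=t$ cuts out an ordinary node on $F_t$; since the $3^c$ nodes form one free $G$-orbit, the quotient fiber has exactly one node, which remains ordinary. As every fiber has arithmetic genus $1$, a single node forces geometric genus $0$ and, via the genus formula, irreducibility, so this fiber is of type $I_1$. I expect the fiber over $\infty$ to be the main obstacle: correctly reading off the $\tilde D_{3^c+4}$ shape, and above all distinguishing the vertical $T_i$ from the horizontal section $S_i$ through the bookkeeping of the non-canonical Type II resolution. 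As a consistency check one verifies $\sum_v e(F_v)=4\cdot 3^c+(3^c+6)+3^c=6(3^c+1)=12\,\chi(\mathcal{O}_{Z_{c,2}})$ with $\chi(\mathcal{O}_{Z_{c,2}})=g+1$, in agreement with the sections $S_i$ having self-intersection $-(g+1)$.
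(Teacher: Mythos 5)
Your identification of the fibers over $0$ and $\infty$ follows the paper's argument essentially verbatim (the four curves of \eqref{Pcurves} plus the four $A_{3^c-1}$ chains forming a $4\cdot 3^c$-cycle; the central chain of $3^c+1$ curves with the four $T_i$ as forks), so the substance of the comparison lies in the other two claims. For the fibers over the $\zeta^i$, you take a genuinely different route: a local computation showing that $x_1x_2=t$ has an ordinary node at each point with $y_1=y_2=0$ (correct: writing $x_i=c_i+a_iy_i^2+\cdots$ in the local coordinate $y_i$ gives a nondegenerate quadratic leading term), together with freeness and transitivity of the $G$-action on these $3^c$ nodes. The paper instead computes $\chi_{\mathrm{top}}(Z_{c,2})=12(g+1)=6\cdot 3^c+6$ via Noether's formula and subtracts the contributions $4\cdot 3^c$ and $3^c+6$ of the two reducible fibers, forcing $e(F_v)=1$ and hence type $I_1$ for each remaining singular fiber. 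Your local argument is more elementary and explains \emph{why} the fibers are nodal, but it carries an extra burden the global count discharges for free: you must also rule out additional components of $h^{-1}(\zeta^i)$ coming from the resolution $C_g^2/G\dashrightarrow Z_{c,2}$ (this does hold, since the Type I and Type II loci lie over $0$ and $\infty$ and the only horizontal exceptional curves are the $S_j$), and your appeal to ``the genus formula'' to get irreducibility is terse — the clean statement is that the only Kodaira type whose reduction has exactly one singular point, an ordinary node, is $I_1$ (multiples being excluded once a section exists). Conversely, the paper's Euler-number argument buys all of this at once but gives no local picture.

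The one place where your proposal is genuinely thinner than it needs to be is the section claim. The vanishing sequences \eqref{curvevanishing} do show that every $s_a$ vanishes to positive order along $T_i$ except $s_{g-1}$, and that no $s_a$ vanishes along $S_i$; this correctly yields that $T_i$ is vertical over $\infty$ and that $h|_{S_i}$ is non-constant, hence that $S_i$ is horizontal. But horizontality is strictly weaker than being a section: you still need $S_i\cdot \mathcal{F}_t=1$, i.e., that $t=s_1/s_0$ restricts to a \emph{degree-one} function on $S_i$, and the vanishing sequences at the origins of the patches alone cannot see the degree. The paper resolves this by an explicit coordinate chase: it computes the slope of $F_t$ at $\delta_j$, follows the strict transform through the two blow-ups to find that $F_t''$ meets the exceptional curve at the point with coordinate $t$, and then tracks this point through the $c-1$ subsequent $\mathbb{Z}/3\mathbb{Z}$-quotients to conclude that $\mathcal{F}_t\cap S_j$ is the single point with coordinate $t^{3^{c-1}}$ on $S_j$. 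Your assertion that $t$ ``restricts to a local parameter along each $S_i$'' is exactly the statement that has to be proved, and it requires this finer bookkeeping rather than the weights and vanishing orders of Section \ref{S and T phi}. Your closing consistency check ($\sum_v e(F_v)=6(3^c+1)=12\chi(\mathcal{O}_{Z_{c,2}})$ and $S_i^2=-\chi(\mathcal{O}_{Z_{c,2}})=-(g+1)$) is correct and is a nice cross-validation that the paper does not make explicit.
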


\begin{figure}\label{ellipticfigure}[h]
\centering{
\includegraphics
[width=.8\textwidth]
{}
\caption{The elliptic surface $h\colon Z_{c,2}\rightarrow \mathbb{P}^1$}}
\end{figure}

\begin{proof}
Consider the diagram
\begin{equation}
\begin{tikzcd}\label{maindiagram2}
Z_{c,2}\rar[-,rightarrow]{h}&\mathbb{P}^1\subset\mathbb{P}(H^0(Z_{c,2},K_{Z_{c,2}}))\\
C_g^2/G\uar[-,dashrightarrow]\rar[-,dashrightarrow]{f} &\mathbb{P}^1\subset\mathbb{P}(H^0(C_g^2,K_{C_g^2}))^G\uar[-,rightarrow]\\
C_g^2\uar[-,rightarrow]\rar[-,dashrightarrow] &\mathbb{P}(H^0(C_g^2,K_{C_g^2}))\uar[-,rightarrow],
\end{tikzcd}
\end{equation}
where $f$ is the rational map studied in Proposition \ref{fibration} and the rational map $C_g^2/G\dashrightarrow Z_{c,2}$ on the top left of the diagram is obtained by resolution of singularities. 

By Proposition \ref{fibration}, the rational map $f$  has singular fibers at $0$, $\infty$, and $\zeta^i$ in $\mathbb{P}^1$. Thus we consider the fibers of $h$ above these points. Let us begin by focusing on the fibers of $h$ above the points $0$ and $\infty$. 

Consider the rational map $\alpha\colon C_g^2 \dashrightarrow \mathbb{P}^1\subset \mathbb{P}(H^0(C_g^2, K_{C_g^2})^G$ from the proof of Proposition \ref{fibration}. Recall that $\alpha$ may be viewed as the rational map given by $(z_1,z_2)\mapsto [s_0(z_1,z_2): \cdots : s_{g-1}(z_1,z_2)]$, where $s_a\coloneqq x_1^a\omega_1\times x_2^a\omega_2$ for $0\le a\le g-1$. We know that the points of $C_g^2$ on which all the $s_a$ vanish are exactly the Type II fixed points. 

For any set of points $A$ on the curve $C_g$, let $C_g-A$ denote the complement in $C_g$ of this set of points. Then from what we have established, we know that the preimage under $\alpha$ of the point $[1:0:\cdots:0]$ consists of the union of open curves
\begin{equation}\label{preimage1}
(P_1\times (C_g-\{Q\}))\cup (P_2\times (C_g-\{Q\}))\cup ((C_g-\{Q\})\times P_1)\cup ((C_g-\{Q\})\times P_2).
\end{equation}
Similarly, the preimage under $\alpha$ of the point $[0:\cdots:0:1]$ contains the union of open curves
\begin{equation}\label{preimage2}
(Q\times Q)\cup (Q\times (C_g-\{P_1,P_2\}))\cup ((C_g-\{P_1,P_2\})\times Q).
\end{equation}

In particular, the image under $\alpha$ of the fixed points in $C_g^2$ of the form $(P_i,P_j)$ is the point $0$ in  $\mathbb{P}^1$. Each such point has image in $Z_{c,2}$ consisting of a chain of $3^c-1$ rational curves and so by the Diagram (\ref{maindiagram2}), the fibration $h\colon Z_{c,2}\rightarrow \mathbb{P}^1$ must send all of these $3^c-1$ rational curves to the point $0$.

Moreover, using Diagram \eqref{maindiagram2} in conjunction with Equations \eqref{preimage1}, since $h$ is a morphism we must have that the strict transforms in $Z_{c,2}$ of the curves $P_1\times C_g,$ $P_2\times C_g,$  $C_g\times P_1,$  and $C_g\times P_2$
also get sent to $0$. Note that the strict transform of $C_g\times P_j$ will intersect the chain of rational curves resolving the singularity $P_i\times P_j$ at one end of the chain and the strict transform of $P_i\times C_g$ will intersect the chain at the other end of the chain. 

Similarly, by Equation \eqref{preimage2} the image of the fixed point $(Q,Q)$ in $C_g^2$ will be sent by $\alpha$ to the point $\infty$ in  $\mathbb{P}^1$. Since such a fixed point has image in $Z_{c,2}$ consisting of a chain of $3^c-1$ rational curves, Diagram \eqref{maindiagram2} yields that $f$ must send all of these rational curves to the point $\infty$.

Moreover, by Diagram \eqref{maindiagram2} in conjunction with \eqref{preimage2}, since $h$ is a morphism we must have that the strict transforms in $Z_{c,2}$ of the curves 
$Q\times C_g $ and $C_g\times Q$
 get sent to $\infty$ as well. Again, the strict transform of $Q\times C_g$ will intersect the chain of rational curves resolving the singularity  $(Q,Q)$ at one end and the strict transform of $C_g\times Q$ will intersect the chain at the other end. 
 
Therefore we have established that $h$ sends to the point $0$ in $\mathbb{P}^1$ the strict transforms in $Z_{c,2}$ of the curves
 \begin{equation}\label{Pcurves}P_1\times C_g,\  P_2\times C_g, \ C_g\times P_1, \text{ and } C_g\times P_2\end{equation}
 and sends to the point $\infty$ the strict transforms of the curves 
\begin{equation}\label{Qcurves}Q\times C_g \text{ and } C_g\times Q.\end{equation}

Each of the four Type II fixed points $\delta_1,$ $\delta_2$, $\delta_3,$ and $\delta_4$ in $C_g^2$ has one of the curves in (\ref{Pcurves}) and one of the curves in (\ref{Qcurves}) passing through it. Note for instance that if $\delta_j$ is of the form $Q\times P_i$, then its resolution in $Z_{c,2}$ intersects the curve $Q\times C_g$ at the end of the curve $T_j$ away from $S_j$ and intersects the curve $C_g\times P_i$ at the end of $S_j$ away from $T_j$.

Moreover, by the adjunction formula, the curves on $Z_{c,2}$ contracted by the map $h$ are exactly those with self-intersection $-2$, meaning that $h$ contracts the curves $T_j$ and maps the curve $S_j$ to all of $\mathbb{P}^1$ for each $1\le j\le 4$.  Since each curve $S_j$ intersects either the curve $Q\times C_g$ or the curve $ C_g\times Q$, both of which get sent to the point $\infty$ by $\alpha$, it follows that $f$ sends all the $S_j$ curves to$\infty$ as well. 

In summary, the fiber of $h$ above the point $0$ in $\mathbb{P}^1$ is a cycle consisting of the four sets of $3^c-1$ rational curves coming from the resolutions of the points $P_i\times P_j$ together with the four curves in (\ref{Pcurves}). Hence the fiber consists of $4(3^c-1)+4=4\cdot 3^c$
rational curves and thus is a fiber of type $I_{4\cdot 3^c}$ in Kodaira's classification. Similarly, the fiber above the point $\infty$ 
 consists of the $3^c-1$ rational curves resolving the singularity $Q\times Q$ together with the curves in (\ref{Qcurves}) and the four $(-2)$-curves $T_1$, $T_2$, $T_3$, $T_4$. Hence, the fiber consists of a chain of $3^c+1$ rational curves, where each curve on the ends of the chain has two additional curves coming off it.  This is a fiber of type $I_{3^c}^*$. See figure \ref{ellipticfigure} for a pictorial representation of this arrangement. 
 
It remains to identify the fibers of $h$ occurring above the points $\zeta^i$. Note that by the proof of Proposition \ref{fibration} the fibers of $f$ above the $\zeta^i$ have a singularity at the single point on the fiber which is the image in $C_g^2/G$ of the points in $C_g^2$ of the form $((x_1, 0), (x_2, 0))$, where the $x_j$ are of the form $\xi^{2\gamma_j+1}$ for $\xi$ a primitive $2\cdot 3^c$-th root of unity and $0\le \gamma_j\le 3^c-1$. In particular, $x_j\ne \pm 1$ and so while the corresponding point in  $C_g^2/G$ is singularity of the fiber of $f$, it is not a singular point of the surface  $C_g^2/G$. In particular, it remains a singular point of the fiber of $h$ above $\zeta^i$. So $h$ has fibers with an isolated singular point above the points $\zeta^i$ in $\mathbb{P}^1$. To determine these fibers more precisely, note that from \cite[Proposition 5.16]{cossec},
for a complex elliptic surface $\varphi:S\rightarrow C$ with fiber $F_v$ at $v\in C$ having $m_v$ components, we have 
\begin{equation}\label{euler characteristic formula}\chi_{\mathrm{top}}(S)=\sum_{v\in C}e(F_v),\end{equation}
 where $e(F_v)$ is $0$ if $F_v$ is smooth, is $m_v$ if $F_v$ is of type $I_n$, and is $m_v+1$ otherwise. Since the surface $Z_{c,2}$ has $q=0$ and geometric genus $p_g=g$, its geometric Euler number is $g+1$, and so by Noether's formula $\chi_{\mathrm{top}}(Z_{c,2})=12(g+1)=6\cdot3^c+6.$ Moreover, the identified fibers of $h$ of type $I_{4\cdot 3^c}$ and $I_{3^c}^*$ above $0$ and $\infty$ respectively contribute $4\cdot3^c+((3^c+5)+1)=5\cdot 3^c+6$ to the right hand side of Equation \eqref{euler characteristic formula}. Hence the $3^c$ singular fibers of $h$ with an isolated singularity contribute exactly $3^c$ to the right hand side of Equation \eqref{euler characteristic formula}. It follows that $m_v=1$ for each of these singular fibers, meaning that each such fiber is of type $I_1$ in Kodaira's classification, as claimed. This completes the analysis of the singular fibers of $h$.

It thus only remains to verify that each of the curves $S_j$ for $1\le j\le 4$ corresponds to a section of $h$. Since we know that $h$ maps $S_j$ surjectively onto $\mathbb{P}^1$, it just remains to verify that for each $t\in \mathbb{P}^1$ there is a unique $s\in S_j$ such that $h(s)=t$. 

Without loss of generality suppose that the point $\delta_j$ is of the form $Q\times P_i$,  as a symmetric argument will work for points of the form $P_i\times Q$. Let $t\in \mathbb{P}^1$ and consider the fiber $\mathcal{F}_t=h^{-1}(t)$. Since we have already determined the points of intersection of $\mathcal{F}_0$ and $\mathcal{F}_\infty$ with $S_j$ we may assume $t\ne 0,\infty$. 

Now $\mathcal{F}_t$ is the image in $Z_{c,2}$ of the curve $F_t$ in $C_g^2$ given by the equation $x_1x_2=t$.  Recall that $Q\times P_i$ is given in local coordinates by $(v_1,x_2)=(0,0)$ and so near $\delta_j$ the curve $F_t$ is given by $u_1^{-1}x_2=t$.  We may rewrite this as $\gamma(v_1)^{-1}x_2=t$, for $\gamma(v_1)$ a continuous function of degree $2$ in $v_1$. Hence, close to $\delta_j$, the curve $F_t$ has coordinates given by $(v_1,t\gamma(v_1))$.  Thus the slope of $F_t$ at $\delta_j$ is 
$\lim_{v_1\to 0}\frac{t\gamma(v_1)}{v_1}=0.$

It follows that the strict transform $F_t'$ of $F_t$ in the blow-up of $C_g^2$ at $\delta_j$ intersects the exceptional curve $E_0'$ with coordinates $[z_{0,1}':z_{0,2}']$ at the point $[z_{0,1}':z_{0,2}']=[1:0]$. Taking the coordinate patch $z_{0,1}'\ne0$ yields local coordinates $(v_1,z_{0,2}')$ and $F_t'$ intersects $E_0'$ at the point $(v_1,z_{0,2}')=(0,0)$. Moreover, near this point, the curve $F_t'$ has coordinates $(v_1,z_{0,2}')=\left(v_1, \frac{t\gamma(v_1)}{v_1}\right)$, since $v_1z_{0,2}'=z_{0,1}'x_2$. Hence the slope of $F_t'$ at the point $(v_1,z_{0,2}')=(0,0)$ is 
$\lim_{v_1\to 0}\frac{t\gamma(v_1)}{v_1^2}=t.$

Since the point $(v_1,z_{0,2}')=(0,0)$ gets blown up in the transformation $Y_0''\rightarrow Y_0$, the strict transform $F_t''$ of the curve $F_t'$ after this blowup intersects the exceptional curve at the point with coordinate $t$. Moreover, observe that this point with coordinate $t$ is covered by the coordinate patch $R_0''$ introduced in Section \ref{local weight section} and discussed in more detail in the surface case in Section \ref{S and T phi}. Since the coordinate patch $R_c$ in $Y_c=X_{c,2}$ is obtained from $R_0''$ by a sequence of $c-1$ quotients by $\mathbb{Z}/3\mathbb{Z}$, this point corresponds to the point with coordinate $t^{3^{c-1}}$ on the image of this exceptional curve in $X_{c,2}$. But since this exceptional curve does not get contracted in passing from $X_{c,2}$ to $Z_{c,2}$ (see Section \ref{S and T phi}), this intersection point is also the point with coordinate $t^{3^{c-1}}$ on the image of this exceptional curve in $Z_{c,2}$, which is just the curve $S_j$. It follows that the curve $\mathcal{F}_t$ intersects $S_j$ at the point of $S_j$ with coordinate $t^{3^{c-1}}$. Hence $s=t^{3^{c-1}}$ is the unique point in $S_j$ such that $h(s)=t$ and therefore $S_j$ indeed corresponds to a section of $h$.

\end{proof}


The Mordell-Weil group of an elliptic fibration $\varphi:S\rightarrow C$ is the group of $K$-rational points on the generic fiber of $\varphi$, where $K=\mathbb{C}(C)$. Such an elliptic surface $S$ is called \emph{extremal} if it has maximal Picard rank $\rho(S)$, meaning $\rho(S)=h^{1,1}(S)$, and its Mordell-Weil group has rank $r=0$. 

Schreieder proves in \cite[Section 8.2]{schreieder} that for any $n,c\ge 2$ the group $H^{p,p}(X_c,n)$ is generated by algebraic classes, thus in particular the surface $Z_{c,2}$ satisfies $\rho(Z_{c,2})=h^{1,1}(Z_{c,2})$. As a consequence of Proposition \ref{sing} we in fact obtain:

\begin{cor}\label{extremal} For $c\ge 2$, the surface $h:Z_{c,2}\rightarrow \mathbb{P}^1$ is an extremal elliptic surface.\end{cor}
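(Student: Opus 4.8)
The plan is to reduce everything to the Shioda--Tate formula. For an elliptic surface $\varphi\colon S\to C$ admitting a section, this formula expresses the Picard rank as
\[
\rho(S)=2+r+\sum_{v\in C}(m_v-1),
\]
where $r$ is the rank of the Mordell--Weil group and $m_v$ denotes the number of irreducible components of the fiber over $v$. Proposition \ref{sing} exhibits the curves $S_j$ as sections of $h$, so the fibration has a section and Shioda--Tate applies. Since Schreieder's result already gives $\rho(Z_{c,2})=h^{1,1}(Z_{c,2})$, proving extremality amounts to showing $r=0$, which I will read off the formula once $h^{1,1}$ and the fiber contributions are both computed.

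First I would compute $h^{1,1}(Z_{c,2})$. Using $q=0$ and $p_g=g$ together with the value $\chi_{\mathrm{top}}(Z_{c,2})=12(g+1)=6\cdot 3^c+6$ established in the proof of Proposition \ref{sing}, the relation $\chi_{\mathrm{top}}=2+2p_g+h^{1,1}$ yields
\[
h^{1,1}(Z_{c,2})=6\cdot 3^c+4-2g=5\cdot 3^c+5.
\]
Next I would read off the component counts from Proposition \ref{sing}: the fiber of type $I_{4\cdot 3^c}$ over $0$ has $4\cdot 3^c$ components, the fiber of type $I_{3^c}^*$ over $\infty$ has $3^c+5$ components, and each of the $3^c$ fibers of type $I_1$ is irreducible. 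Hence
\[
\sum_{v}(m_v-1)=(4\cdot 3^c-1)+(3^c+4)+0=5\cdot 3^c+3.
\]

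Substituting into Shioda--Tate gives $\rho(Z_{c,2})=5\cdot 3^c+5+r$, and comparing with $\rho(Z_{c,2})=h^{1,1}(Z_{c,2})=5\cdot 3^c+5$ forces $r=0$. Combined with the equality $\rho=h^{1,1}$, this is exactly the definition of an extremal elliptic surface, completing the proof.

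I expect no genuine obstacle: the full list of singular fibers, the identity $\rho=h^{1,1}$, the Euler characteristic, and the sections $S_j$ have all been assembled already, so the argument is essentially a bookkeeping computation with Shioda--Tate. The only point demanding mild care is the component count for the $I_b^*$ fiber, namely $b+5$ as recorded before Proposition \ref{sing}, since a constant error there would destroy the cancellation that produces $r=0$.
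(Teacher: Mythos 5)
Your proposal is correct and follows essentially the same route as the paper: the Shioda--Tate formula with the fiber data from Proposition \ref{sing} gives $\rho(Z_{c,2})=5\cdot 3^c+5+r$, and comparing with $h^{1,1}(Z_{c,2})=5\cdot 3^c+5$ (computed from $\chi_{\mathrm{top}}$, $q=0$, $p_g=g$) forces $r=0$. All the numerical details, including the $b+5$ component count for the $I_b^*$ fiber, match the paper's computation.
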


\begin{proof}
For the fibration $h:Z_{c,2}\rightarrow \mathbb{P}^1$ and for any $v\in \mathbb{P}^1$, let $\mathcal{F}_v$ denote the fiber $h^{-1}(v)$ and let $m_v$ denote the number of components of $\mathcal{F}_v$. Define 
\[R=\{v\in \mathbb{P}^1\mid  \mathcal{F}_v \text{ is reducible} \}.\]
The Shioda-Tate formula \cite[Corollary 6.13]{elsur} expresses the Picard number $\rho(Z_{c,2})$ in terms of the reducible singular fibers and the rank $r$ of the Mordell-Weil group of $f:Z_{c,2}\rightarrow \mathbb{P}^1$:
\begin{equation}\label{stformula}\rho(Z_{c,2})=2+\sum_{v\in R}(m_v-1)+r.\end{equation}

We know from Proposition \ref{sing} that $h$ has two reducible singular fibers: one of type $I_{4\cdot3^c}$ at $0$ and one of type $I_{3^c}^*$ at $\infty$. Therefore
\[\sum_{v\in R}(m_v-1)=(4\cdot 3^c-1)+(3^c+4)=5\cdot 3^c+3.\]
So then Equation (\ref{stformula}) becomes
$\rho(Z_{c,2})=5\cdot 3^c+5+r.$

We showed in the proof of Proposition \ref{sing} that $\chi_{\mathrm{top}}(Z_{c,2})=12(g+1)=6\cdot3^c+6$. Since $h^{1,0}(Z_{c,2})=h^{0,1}(Z_{c,2})=0$ and $h^{2,0}(Z_{c,2})=h^{0,2}(Z_{c,2})=g$, it follows that $h^{1,1}(Z_{c,2})=10(g+1)=5\cdot3^c+5$.  Therefore $r=0$ and $\rho(Z_{c,2})=h^{1,1}(Z_{c,2})$.
\end{proof}

\subsection{The $j$-invariant of $h\colon Z_{c,2}\rightarrow \mathbb{P}^1$}
In order to eventually prove that the extremal elliptic surface $h\colon Z_{c,2}\rightarrow \mathbb{P}^1$ is in fact an elliptic modular surface, it will be necessary to first describe the $j$-invariant of the fibration $h$. 

For an elliptic fibration $\varphi\colon S\rightarrow C$ without multiple fibers, consider the rational map $j\colon C\dashrightarrow \mathbb{P}^1$ given by sending each point $P\in C$ such that $\varphi^{-1}(P)$ is nonsingular to the $j$-invariant of the elliptic curve $\varphi^{-1}(P)$. This rational map $j$ can in fact be extended to all of $C$ (see for instance \cite{kodaira}). The morphism $j\colon C \rightarrow \mathbb{P}^1$ is called the $j$-\emph{invariant} of the elliptic surface $\varphi
\colon S\rightarrow C$. 

If $P\in C$ is such that $\varphi^{-1}(P)$ is singular, then we have the following (reproduced from \cite{kloosterman}):
\begin{center}
\begin{tabular}{|c|c|}
\hline
Fiber Type over $P$& $j(P)$\\
\hline
$I_0^*$ & $\ne \infty$\\
$I_b$, $I_b^*$ $(b>0)$ & $\infty$\\
$II$, $IV$, $IV^*$, $II^*$& $0$\\
$III$, $III^*$ & $1728$\\
\hline
\end{tabular}
\end{center}

\begin{lem}\label{j-inv}
For $c\ge 2$, the $j$-invariant $j\colon \mathbb{P}^1\rightarrow \mathbb{P}^1$ of $h\colon Z_{c,2}\rightarrow \mathbb{P}^1$ is non-constant.
\end{lem}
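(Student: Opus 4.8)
The plan is to argue by contradiction, combining the classification of the singular fibers obtained in Proposition \ref{sing} with the dictionary relating Kodaira fiber types to values of the $j$-invariant given in the table preceding this lemma.

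First I would record which fiber types actually occur. By Proposition \ref{sing}, the fibration $h\colon Z_{c,2}\rightarrow \mathbb{P}^1$ has a fiber of type $I_{4\cdot 3^c}$ over $0$, a fiber of type $I_{3^c}^*$ over $\infty$, and $3^c$ fibers of type $I_1$ over the primitive $3^c$-th roots of unity. In particular $h$ has at least one fiber of type $I_b$ with $b>0$ (for instance the fiber over $0$). According to the table, every fiber of type $I_b$ or $I_b^*$ with $b>0$ forces the $j$-invariant to take the value $\infty$ at the corresponding point of the base; hence $j(0)=\infty$.

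Next I would suppose, toward a contradiction, that $j\colon \mathbb{P}^1 \rightarrow \mathbb{P}^1$ is constant. Since $j(0)=\infty$, constancy would force $j\equiv \infty$. On the other hand, the singular fibers of $h$ occur only over the finitely many points listed in Proposition \ref{sing}, so over the complementary dense open subset $U\subset \mathbb{P}^1$ every fiber $h^{-1}(P)$ is a smooth elliptic curve. For $P\in U$ the value $j(P)$ is, by the very definition of the $j$-invariant of an elliptic surface, the $j$-invariant of this smooth elliptic curve, which is a finite complex number; thus $j(P)\ne \infty$ for $P\in U$. This contradicts $j\equiv \infty$, and therefore $j$ is non-constant.

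The argument is short, and I do not expect a genuine obstacle: the only step requiring care is the logical one, namely observing that a hypothetically constant $j$ is pinned to the value $\infty$ by the presence of the multiplicative ($I_b$-type) fibers, after which the existence of smooth fibers immediately yields the contradiction. What I would make sure to state explicitly is that Proposition \ref{sing} does supply a fiber of type $I_b$ with $b>0$ and that smooth fibers exist — both of which are immediate from the fiber list — so that no delicate computation of $j$ as a rational function on $\mathbb{P}^1$ is needed; the substance lies entirely in combining the fiber-type data with the $j$-invariant table.
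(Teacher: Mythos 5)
Your proposal is correct and is essentially the same argument as the paper's: both use Proposition \ref{sing} to see that some fiber is of type $I_b$ (so $j$ takes the value $\infty$ there), and then observe that over the dense open locus of smooth fibers $j$ takes finite values, forcing $j$ to be non-constant. The only cosmetic difference is that you phrase it as a proof by contradiction while the paper states the two observations directly.
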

\begin{proof}
From Proposition \ref{sing}, all of the singular fibers of $h\colon Z_{c,2}\rightarrow \mathbb{P}^1$ are of type $I_b$ or $I_b^*$ with $b>0$. Hence the $j$-invariant of $h\colon  Z_{c,2}\rightarrow \mathbb{P}^1$ satisfies $j(P)=\infty$ for all $P\in \mathbb{P}^1$ such that $h^{-1}(P)$ is singular. However, since generically for $P\in \mathbb{P}^1$ the $j$-invariant $j(P)$ is the $j$-invariant of the elliptic curve $h^{-1}(P)$, generically $j$ cannot be $\infty$. Thus $j$ is non-constant.
\end{proof}

\begin{prop}\label{j-inv2} For $c\ge 2$, the $j$-invariant $j:\mathbb{P}^1\rightarrow \mathbb{P}^1$ of $h:Z_{c,2}\rightarrow \mathbb{P}^1$ has degree $6\cdot 3^c$ and is ramified at the points $0$, $1728$, and $\infty$. There are $2\cdot 3^c$ branch points above $0$, all of ramification index $3$. There are $3\cdot 3^{c}$ branch points above $1728$, all of ramification index $2$. Finally, there are $2$ branch points above $\infty$, one with ramification index $4\cdot 3^c$ corresponding to the point $0\in \mathbb{P}^1$ and one with ramification index $3^c$ corresponding to the point $\infty \in \mathbb{P}^1$.
\end{prop}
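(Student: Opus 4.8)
The plan is to read off the degree of $j$ from its polar divisor, locate all of its ramification using the fiber types computed in Proposition \ref{sing}, and then close the bookkeeping with Riemann--Hurwitz. Since a fiber of type $I_b$ or $I_b^*$ forces $j$ to have a pole of order exactly $b$ at the corresponding point of the base (by Kodaira's local description of the $j$-invariant, recorded in the table preceding Lemma \ref{j-inv}), Proposition \ref{sing} gives that $j^{-1}(\infty)$ consists of the point $0$ with multiplicity $4\cdot 3^c$, the point $\infty$ with multiplicity $3^c$, and each root of unity $\zeta^i$ with multiplicity $1$. Hence $\deg j = 4\cdot 3^c + 3^c + 3^c = 6\cdot 3^c$, and over $\infty$ the ramification indices are $4\cdot 3^c$ at $0$, $3^c$ at $\infty$, and $1$ at each $\zeta^i$; in particular exactly two of these preimages, namely $0$ and $\infty$, are branch points, as claimed.

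Next I would control the fibers of $j$ over $0$ and $1728$. By Proposition \ref{sing} every singular fiber of $h$ is of type $I_b$ or $I_b^*$, all of which have $j=\infty$, so the fiber of $h$ over any point $P$ with $j(P)\in\{0,1728\}$ is smooth. Writing the surface locally in Weierstrass form $y^2=x^3+A(t)x+B(t)$ with respect to one of the sections $S_j$ furnished by Proposition \ref{sing}, one has $j=1728\cdot 4A^3/(4A^3+27B^2)$ with nonvanishing discriminant $\Delta=4A^3+27B^2$ at such a smooth fiber. At a point where $j=0$ one has $A=0$ but $B\neq 0$, so $\mathrm{ord}_P(j)=3\,\mathrm{ord}_P(A)$ is divisible by $3$; at a point where $j=1728$ one has $B=0$ but $A\neq 0$, so $\mathrm{ord}_P(j-1728)=2\,\mathrm{ord}_P(B)$ is divisible by $2$. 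Thus every ramification index over $0$ is a multiple of $3$ and every ramification index over $1728$ is a multiple of $2$.

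The exact indices then follow from Riemann--Hurwitz, which is the real crux. As $j$ has degree $6\cdot 3^c$ between two copies of $\mathbb{P}^1$, the total ramification is $\sum_P(e_P-1)=2\cdot 6\cdot 3^c-2=12\cdot 3^c-2$, of which the fiber over $\infty$ already accounts for $(4\cdot 3^c-1)+(3^c-1)=5\cdot 3^c-2$. Writing the ramification indices over $0$ as $3f_i$ and those over $1728$ as $2h_j$, I have $\sum_i 3f_i=\sum_j 2h_j=6\cdot 3^c$, so the numbers $k$ and $l$ of preimages satisfy $k\le 2\cdot 3^c$ and $l\le 3\cdot 3^c$, with ramification contributions $6\cdot 3^c-k$ and $6\cdot 3^c-l$ respectively. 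Equating the total then gives $k+l=5\cdot 3^c+R_{\mathrm{else}}$, where $R_{\mathrm{else}}\ge 0$ is the ramification over all other values of $j$; since also $k+l\le 5\cdot 3^c$, I conclude $R_{\mathrm{else}}=0$, $k=2\cdot 3^c$, and $l=3\cdot 3^c$. The equalities $k=2\cdot 3^c$ and $l=3\cdot 3^c$ are possible only if every $f_i=1$ and every $h_j=1$, i.e. all ramification indices over $0$ equal $3$ and all over $1728$ equal $2$, producing exactly $2\cdot 3^c$ and $3\cdot 3^c$ branch points.

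The main obstacle is the local Weierstrass analysis establishing divisibility of the ramification indices by $3$ over $0$ and by $2$ over $1728$; this depends on the existence of a Weierstrass model, which is available precisely because Proposition \ref{sing} exhibits the sections $S_j$. Once this divisibility is secured, the degree computation and the Riemann--Hurwitz count are purely numerical and rigidly force the stated ramification profile, including the absence of any further branch points away from $0$, $1728$, and $\infty$.
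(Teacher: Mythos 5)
Your proof is correct, and it reaches the stated ramification profile by a route that is genuinely more self-contained than the one in the paper. The paper's proof leans on extremality: it first establishes (Corollary \ref{extremal}) that $Z_{c,2}$ is extremal, then invokes Nori's characterization of extremal elliptic surfaces (\cite[Theorem 3.1, Lemma 3.2]{nori}) to conclude that $j$ is ramified only over $0$, $1728$, $\infty$ with $e_v\le 3$ over $0$ and $e_v\le 2$ over $1728$, and finally shows the resulting inequalities $R_0\ge \tfrac{2\deg j}{3}$, $R_{1728}\ge \tfrac{\deg j}{2}$ must be equalities. You instead derive the congruence constraints $3\mid e_P$ over $0$ and $2\mid e_P$ over $1728$ directly from a local Weierstrass model $y^2=x^3+A(t)x+B(t)$ (available because Proposition \ref{sing} supplies a section, and valid near these points because all singular fibers have $j=\infty$), and then let Riemann--Hurwitz do the rest: the tightness of $k+l\le 5\cdot 3^c$ simultaneously forces $R_{\mathrm{else}}=0$, pins down $k=2\cdot 3^c$ and $l=3\cdot 3^c$, and forces every index over $0$ to equal $3$ and every index over $1728$ to equal $2$. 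This is essentially the content of Nori's Lemma 3.2 unpacked from first principles, so the numerical endgame is parallel; what your version buys is independence from Corollary \ref{extremal} and from Nori's theorem as a black box (you use only the fiber types and the section from Proposition \ref{sing}), and the absence of ramification away from $0$, $1728$, $\infty$ comes out as a conclusion rather than an input. The degree computation via the polar divisor of $j$ agrees with the paper's $\deg(j)=\sum_{I_b}b+\sum_{I_b^*}b$.
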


\begin{proof}
This follows directly from results of Mangala Nori in \cite{nori}.  In particular, Nori proves in \cite[Theorem 3.1]{nori} that an elliptic fibration $S\rightarrow B$ with non-constant $j$-invariant is extremal if and only if the fibration has no singular fibers of type $I_0^*$, $II$, $III$, or $IV$ and its $j$-invariant is ramified only over $0$, $1728$, and $\infty$ with ramification index $e_v$ for $v\in B$ satisfying $e_v=1$, $2$, or $3$ if $j(v)=0$ and $e_v=1$ or $2$ if $j(v)=1$.

We know from Corollary \ref{extremal} that $h\colon Z_{c,2}\rightarrow \mathbb{P}^1$ is extremal and from Lemma \ref{j-inv} that it has non-constant $j$-invariant. Hence, it follows from  \cite[Theorem 3.1]{nori} that $j:\mathbb{P}^1\rightarrow \mathbb{P}^1$ is ramified only over the points $0$, $1728$, and $\infty$. Moreover
\[\mathrm{deg}(j)=\sum_{I_b} b + \sum_{I_b^*}b,\]
where the two sums occur over all the singular fibers of $f$ of type $I_b$ and of type $I_b^*$ respectively. 

From Proposition \ref{sing}, the fibration $h\colon Z_{c,2}\rightarrow \mathbb{P}^1$ has one fiber of type $I_{4\cdot 3^c}$, one fiber of type $I_{3^c}^*$, and  $3^c$ fibers of type $I_1$. Thus $\mathrm{deg}(j)=6\cdot 3^c.$

Now let
\begin{equation*}
\begin{aligned}
\mathcal{R}_0&=\{v\in \mathbb{P}^1\mid j(v)=0\}\\
\mathcal{R}_{1728}&=\{v\in \mathbb{P}^1\mid j(v)=1728\}.
\end{aligned}
\end{equation*}

If $e_v$ denotes the ramification index of a point $v\in \mathbb{P}^1$, let
\begin{equation*}
\begin{aligned}
R_0&=\sum_{v\in \mathcal{R}_0}(e_v-1)\\
R_{1728}&=\sum_{v\in \mathcal{R}_{1728}}(e_v-1)
\end{aligned}
\end{equation*}

Then since Proposition \ref{sing} implies that $h$ has no singular fibers of type $II,$ $II^*,$ $III$, $III^*$, $IV$, or $IV^*,$ Nori's calculations in the proof of
\cite[Lemma 3.2]{nori} yield the following three equations:
\begin{equation}R_0+R_{1728}=\frac{7\cdot\mathrm{deg}(j)}{6}\end{equation}
\begin{equation}\label{r0}R_0-\frac{2\cdot\mathrm{deg}(j)}{3}\ge0 \end{equation}
\begin{equation}\label{r1}R_{1728}-\frac{\mathrm{deg}(j)}{2}\ge0 \end{equation}
Observe that
$\frac{2\cdot\mathrm{deg}(j)}{3}+ \frac{\mathrm{deg}(j)}{2}=\frac{7\cdot\mathrm{deg}(j)}{6}.$
Therefore we must have equality in Equations (\ref{r0}) and (\ref{r1}). It follows that
\begin{equation}\label{r0equation}R_0=\frac{2\cdot\mathrm{deg}(j)}{3}=4\cdot 3^c\end{equation}
\begin{equation}\label{r1equation}R_{1728}=\frac{\mathrm{deg}(j)}{2}=3\cdot3^{c}.\end{equation}
Moreover, because equality holds in  \eqref{r0}, Nori's proof in \cite[Lemma 3.2]{nori} implies that 
$\mathrm{deg}(j)=3|\mathcal{R}_0|.$
Hence we have
\begin{equation}\label{calr0}|\mathcal{R}_0|=2\cdot 3^c.\end{equation}
Now from \cite[Theorem 3.1]{nori}, for any $v\in \mathcal{R}_0$, we must have $e_v\le 3$. Hence using (\ref{calr0}), it follows that
$R_0\le 4\cdot 3^c.$
But we have already shown in Equation \eqref{r0equation} that equality holds, therefore we must have $e_v=3$ for all $v\in \mathcal{R}_0$.

Since $Z_{c,2}$ is extremal and $h:Z_{c,2}\rightarrow \mathbb{P}^1$ has no singular fibers of type $III^*$, Nori's results \cite[Theorem 3.1]{nori} also imply that $e_v=2$ for all $v\in \mathcal{R}_{1728}$.

Finally, we know $j$ has a pole of order $b_i$ at points $v_i\in \mathbb{P}^1$ where the fiber over $v_i$ is of type $I_{b_i}$ or of type $I_{b_i}^*$. Hence the result follows from Proposition \ref{sing}.
\end{proof}


\subsection{Preliminaries on Elliptic Modular Surfaces} 
We begin by giving a brief introduction to elliptic modular surfaces as defined by Shioda \cite{modsur}. 
Following Nori \cite{nori}, for an elliptic surface $\varphi\colon S\rightarrow C$ with $j$-invariant $j\colon C\rightarrow \mathbb{P}^1$, let us define
\[C'=C\backslash j^{-1}\{0,1728,\infty\}.\]
In particular, for every $v\in C'$, the fiber $F_v=\varphi^{-1}(v)$ is smooth. The sheaf $G=R^1\varphi_*\mathbb{Z}$ on $C$ is the \emph{homological invariant} of the elliptic surface $S$. The restriction of $G$ to $C'$ is then a locally constant sheaf of rank two $\mathbb{Z}$-modules. Consider the monodromy homomorphism $\rho\colon \pi_1(C')\rightarrow SL(2,\mathbb{Z})$ associated to $\varphi\colon S\rightarrow C$. Observe that $\rho$ both determines and is determined by the sheaf $G$.

Conversely, let $j\colon C\rightarrow \mathbb{P}^1$ be a holomorphic map from an algebraic curve $C$ to $\mathbb{P}^1$ and let $C'=C\backslash j^{-1}\{0,1728,\infty\}.$ Let $\mathcal{H}=\{z\in \mathbb{C} \mid \mathrm{Im} (z)>0\}$ be the upper half-plane in $\mathbb{C}$ and consider the elliptic modular function $J\colon \mathcal{H}\rightarrow \mathbb{P}^1\backslash\{0,1728,\infty\}$. Finally let $U'$ be the universal cover of $C'$. Then there exists a holomorphic map $w\colon U'\rightarrow \mathcal{H}$ such that the following diagram commutes:
\begin{equation}\label{mono}
\begin{tikzcd}
U'\arrow{d}{\pi} \arrow{r}{w}&\mathcal{H}\arrow{d}{J}\\
C'\arrow{r}{j} &\mathbb{P}^1\backslash\{0,1728,\infty\}.
\end{tikzcd}
\end{equation}
This map $w$ thus induces a homomorphism $\overline{\rho}\colon \pi_1(C')\rightarrow PSL(2,\mathbb{Z})$.

Now suppose $\rho\colon \pi_1(C')\rightarrow SL(2,\mathbb{Z})$ is a homomorphism making the following diagram commute:
\[
\begin{tikzcd}[column sep=0pt]
\pi_1(C')\arrow{rd}{\overline{\rho}} \arrow{rr}{\rho}&[-6]&SL(2,\mathbb{Z})\arrow{ld}\\
&PSL(2,\mathbb{Z})&.
\end{tikzcd}
\]
Then it is possible to construct a unique elliptic surface $\varphi\colon S\rightarrow C$ having $j$-invariant given by the holomorphic map $j\colon C\rightarrow \mathbb{P}^1$ and having homological invariant given by the sheaf $G$ associated to the homomorphism $\rho$ \cite[Section 8]{kodaira}.

So now consider any finite-index subgroup $\Gamma$ of the modular group $SL(2,\mathbb{Z})$ not containing $-\mathrm{Id}$. Then $\Gamma$ acts on the upper half plane $\mathcal{H}$ and the quotient $\Gamma \backslash \mathcal{H}$, together with a finite number of cusps, forms an algebraic curve $C_\Gamma$. For any other such subgroup $\Gamma'$, if $\Gamma\subset \Gamma'$, then the canonical map $\Gamma \backslash \mathcal{H}\rightarrow \Gamma' \backslash \mathcal{H}$ extends to a holomorphic map $C_\Gamma \rightarrow C_{\Gamma'}$. In particular, taking $\Gamma'=SL(2,\mathbb{Z})$ and identifying $C_{\Gamma'}$ with $\mathbb{P}^1$ via the elliptic modular function $J$, we get a holomorphic map
\[j_{\Gamma}\colon C_{\Gamma}\rightarrow \mathbb{P}^1.\]
Hence, as discussed, there exists a $w\colon U'\rightarrow \mathcal{H}$ fitting into a diagram (\ref{mono}) which induces a representation $\overline{\rho}\colon \pi_1(C')\rightarrow \overline{\Gamma}\subset PSL(2,\mathbb{Z})$, where $\overline{\Gamma}$ is the image of $\Gamma$ in $PSL(2,\mathbb{Z})$. Because $\Gamma$ contains no element of order $2$, this homomorphism $\overline{\rho}$ lifts to a homomorphism $\rho\colon \pi_1(C')\rightarrow SL(2,\mathbb{Z})$, which then gives rise to a sheaf $G_\Gamma$ on $C_\Gamma$.

\begin{defn}\cite{modsur} For any finite index subgroup $\Gamma$ of $SL(2,\mathbb{Z})$ not containing $-\mathrm{Id}$, the associated elliptic surface $\varphi\colon S_\Gamma\rightarrow C_\Gamma$ having $j$-invariant $j_\Gamma$ and homological invariant $G_\Gamma$ is called the \emph{elliptic modular surface} attached to $\Gamma$.  
\end{defn}

\subsection{The surface $Z_{c,2}$ is elliptic modular}
We now return to considering the elliptic surface $h\colon Z_{c,2}\rightarrow \mathbb{P}^1$. Let us define the following elements $A_0, A_1 \ldots, A_{3^c}, A_\infty$ of $SL(2,\mathbb{Z})$ as elements of the following conjugacy classes:
\[
A_0\in \left[\left(
\begin{array}{cc}
1&4\cdot 3^c\\
0& 1
\end{array}\right)\right]
\ \ \ \ \ \ \ 
A_1,\ldots, A_{3^c}\in \left[\left(
\begin{array}{cc}
1&1\\
0& 1
\end{array}\right)\right]
\ \ \ \ \ \ \ 
A_\infty\in \left[\left(
\begin{array}{cc}
-1&-3^c\\
0& -1
\end{array}
\right)
\right]
\]

Then consider the subgroup $\Gamma_c$ of index $6\cdot 3^c$ in $SL(2,\mathbb{Z})$ with the following presentation:
\[\Gamma_c:=\langle A_0, A_1,\ldots, A_{3^c}, A_{\infty} \mid A_0A_1\cdots A_{3^c}A_\infty=\mathrm{Id}\rangle.\]
We remark that $\Gamma_c$ is not a congruence subgroup as it does not appear on the list in \cite{experimental} of the genus $0$ congruence subgroups of $SL(2,\mathbb{Z})$ (see \cite{rademacher} for more details on such subgroups).

\begin{thm}\label{elmod} For $c\ge 2$, the surface $Z_{c,2}$ is the elliptic modular surface attached to $\Gamma_c$. \end{thm}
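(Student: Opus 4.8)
The plan is to identify $Z_{c,2}$ with $S_{\Gamma_c}$ by matching the two pieces of data that, by the theory of Kodaira \cite{kodaira} and Shioda \cite{modsur}, determine an elliptic surface with a section up to isomorphism: its $j$-invariant and its homological invariant, i.e.\ the monodromy representation $\rho\colon\pi_1(C')\to SL(2,\mathbb{Z})$ compatible with $j$ via diagram \eqref{mono}. By construction $S_{\Gamma_c}$ is the elliptic surface over $C_{\Gamma_c}$ whose $j$-invariant is $j_{\Gamma_c}$ and whose monodromy is the tautological inclusion $\Gamma_c\hookrightarrow SL(2,\mathbb{Z})$, so it suffices to check that $h\colon Z_{c,2}\to\mathbb{P}^1$ has base $C_{\Gamma_c}$, $j$-invariant $j_{\Gamma_c}$, and monodromy group $\Gamma_c$. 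We already know from Corollary \ref{extremal}, Lemma \ref{j-inv}, and Proposition \ref{sing} that $h$ is extremal, has non-constant $j$-invariant, and carries a section (any of the curves $S_j$), which places us exactly in the regime where this uniqueness applies.

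First I would pin down the base and the $j$-map intrinsically from $\Gamma_c$. The given presentation exhibits $\Gamma_c$ as a free group of rank $3^c+1$ (one generator being redundant via the relation), hence torsion-free; in particular $-\mathrm{Id}\notin\Gamma_c$, so that $S_{\Gamma_c}$ is genuinely defined as an elliptic modular surface. The parabolic classes of $A_0,A_1,\dots,A_{3^c}$ together with $A_\infty$ (parabolic in $PSL(2,\mathbb{Z})$) give the cusps of $\overline{\Gamma_c}$, of widths $4\cdot3^c$, $1,\dots,1$, and $3^c$, so there are $3^c+2$ cusps with widths summing to $6\cdot3^c$. Proposition \ref{j-inv2} shows that over $j=0$ and $j=1728$ every preimage is ramified, of index $3$ and $2$ respectively, so $\overline{\Gamma_c}$ has no elliptic points; the genus formula $g=1+\frac{6\cdot3^c}{12}-\frac{3^c+2}{2}=0$ then gives $C_{\Gamma_c}\cong\mathbb{P}^1$, and the ramification profile of $j_{\Gamma_c}$ agrees with that of the $j$-invariant of $h$ recorded in Proposition \ref{j-inv2}.

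The central step is the monodromy computation. Let $\Sigma=\{0,\infty,\zeta,\dots,\zeta^{3^c}\}$ be the locus over which $h$ is singular, and choose loops $\gamma_0,\gamma_1,\dots,\gamma_{3^c},\gamma_\infty$ at a general base point encircling $0$, the $\zeta^i$, and $\infty$, so that $\pi_1(\mathbb{P}^1\setminus\Sigma)=\langle\gamma_0,\dots,\gamma_{3^c},\gamma_\infty\mid \gamma_0\gamma_1\cdots\gamma_{3^c}\gamma_\infty=1\rangle$. By Kodaira's local description, the monodromy around a fiber of type $I_b$ is conjugate to $\left(\begin{smallmatrix}1&b\\0&1\end{smallmatrix}\right)$ and around a fiber of type $I_b^*$ is conjugate to $\left(\begin{smallmatrix}-1&-b\\0&-1\end{smallmatrix}\right)$. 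By Proposition \ref{sing} the fibers over $0$, the $\zeta^i$, and $\infty$ have types $I_{4\cdot3^c}$, $I_1$, and $I_{3^c}^*$, so $\rho(\gamma_0)$, $\rho(\gamma_i)$, and $\rho(\gamma_\infty)$ lie in precisely the conjugacy classes defining $A_0$, $A_i$, and $A_\infty$. After a global change of basis of the homological invariant (a conjugation of $\rho$, which does not alter the isomorphism class of the surface), the image of $\rho$ is generated by representatives of these classes subject to the single relation $\rho(\gamma_0)\cdots\rho(\gamma_\infty)=1$, which is exactly the group $\Gamma_c$.

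Combining these, $h\colon Z_{c,2}\to\mathbb{P}^1$ and $S_{\Gamma_c}\to C_{\Gamma_c}$ are elliptic surfaces with a section over the same base $\mathbb{P}^1$, with the same $j$-invariant and the same homological invariant, hence isomorphic by Kodaira's uniqueness. I expect the main obstacle to be the final part of the monodromy step: showing that the global representation $\rho$ lands in $\Gamma_c$ on the nose, rather than merely sending each generator into the correct conjugacy class. The difficulty is that the separate conjugating elements must be chosen compatibly with the single $\pi_1$-relation and with the $j$-map through diagram \eqref{mono}; controlling these simultaneously is precisely the statement that the extremal surface $h$ is the modular surface of the group generated by its local monodromies, which is the content of Nori's analysis \cite{nori}. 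The verification that $-\mathrm{Id}\notin\Gamma_c$, needed for $S_{\Gamma_c}$ to exist, is a secondary point already settled above by the torsion-freeness of $\Gamma_c$.
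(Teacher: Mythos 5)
Your overall strategy is the paper's argument run in reverse: the same inputs appear (Corollary \ref{extremal}, Lemma \ref{j-inv}, Propositions \ref{sing} and \ref{j-inv2}, the Kodaira--Shioda local monodromy tables, and Nori's results), but you try to build $S_{\Gamma_c}$ first and then match it to $Z_{c,2}$ via Kodaira's uniqueness, whereas the paper first invokes Nori's Theorem 3.5 to conclude that $Z_{c,2}$ \emph{is} the elliptic modular surface of \emph{some} $\Gamma$, and only afterwards identifies $\Gamma$. The difference in ordering is not cosmetic: it is exactly what creates the gap you flag at the end, and that gap is genuine as the argument stands. Kodaira's uniqueness requires the two surfaces to have literally the same $j$-map $\mathbb{P}^1\to\mathbb{P}^1$ (after an identification of the bases) and conjugate monodromy representations. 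What you have verified is strictly weaker on both counts: the two $j$-maps have the same degree and the same ramification profile over $\{0,1728,\infty\}$, which does not determine a three-point-branched cover (that requires the full permutation monodromy, not just the passport); and the local monodromies of $h$ lie in the same conjugacy classes as the $A_i$, which does not determine the global representation up to conjugacy, since the conjugating elements for the individual generators must be chosen simultaneously, compatibly with the relation $\gamma_0\gamma_1\cdots\gamma_{3^c}\gamma_\infty=1$ and with diagram \eqref{mono}.

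The paper's ordering dissolves both problems at once: Nori's Theorem 3.5 states that an extremal elliptic surface with a section, non-constant $j$-invariant, and no fibers of type $II^*$ or $III^*$ is already an elliptic modular surface, attached to the image of its own monodromy representation. Granting that, there is nothing left to ``match'': one only has to present the group $\Gamma$, and the conjugacy-class and relation data you computed --- together with the fact that the loops around the ramified preimages of $0$ and $1728$ have trivial monodromy, which the paper extracts from Nori's Proposition 1.4 and $-\mathrm{Id}\notin\Gamma$ and which your argument should also record --- is exactly the defining data of $\Gamma_c$. So your proof becomes correct once the appeal to Kodaira's uniqueness is replaced by an upfront appeal to Nori's Theorem 3.5; your closing remark that the missing step ``is the content of Nori's analysis'' points at precisely this theorem, but as written the proposal leaves the load-bearing step unproved. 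Your preliminary observations about $\Gamma_c$ itself (freeness, hence $-\mathrm{Id}\notin\Gamma_c$; cusp widths summing to $6\cdot 3^c$; genus $0$) are correct and make a useful consistency check, though they are not needed once Nori's theorem is in place.
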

\begin{proof}
In \cite[Theorem 3.5]{nori}, Mangala Nori proves that  an extremal elliptic surface $\varphi\colon S\rightarrow C$ with a section and with non-constant $j$-invariant is an elliptic modular surface as long as $\varphi\colon S\rightarrow C$ has no singular fibers of type $II^*$ or $III^*$ in Kodaira's classification. Therefore, since the surface $h\colon Z_{c,2}\rightarrow \mathbb{P}^1$ is extremal  (by Corollary \ref{extremal}), has a section (by Proposition \ref{sing}), has non-constant $j$-invariant (by Lemma \ref{j-inv}), and only has fibers of type $I_b$ and $I_b^*$ (by Proposition \ref{sing}), we know $Z_{c,2}$  is indeed an elliptic modular surface. 

So let $\Gamma$ be the finite-index subgroup of $SL(2,\mathbb{Z})$ attached to $Z_{c,2}$. By Proposition \ref{j-inv2}, the degree of the $j$-invariant of $h:Z_{c,2}\rightarrow \mathbb{P}^1$ is $6\cdot 3^c$. Hence the group $\Gamma$ has index $6\cdot 3^c$ in $SL(2,\mathbb{Z})$. 

Now consider the $j$-invariant $j:\mathbb{P}^1\rightarrow \mathbb{P}^1$ of $Z_{c,2}$, which we have investigated in Proposition \ref{j-inv2}. Let $C'=\mathbb{P}^1\backslash \{0,1728,\infty\}$.

Because $Z_{c,2}$ is elliptic modular, its $j$-invariant induces a homomorphism 
\[\rho\colon \pi_1(C')\rightarrow \Gamma\subset SL(2,\mathbb{Z}).\]
Let us write the set
\[j^{-1}\{0,1728,\infty\}=\{v_1,\ldots,v_s\}.\]
By Proposition  \ref{j-inv2}, we know $s=5\cdot 3^c +2$. For each point $v_i$ let $\alpha_i$ be the loop element in $\pi_1(C')$ going around $v_i$. 
Then $\pi_1(C')$ is the free group on these generators $\alpha_1,\ldots, \alpha_s$ subject to the relation (taken in cyclic order) $\alpha_1\cdots\alpha_s=1$ \cite[Lemma 2.1]{bogomolov}.

In \cite[Proposition 1.4]{nori}, Nori describes, for an elliptic surface $S\rightarrow C$ with loop elements $\alpha_i\in \pi_1(C)$ around $v_i\in C$, the possible values of $\rho(\alpha_i)$ depending on the values of $j(v_i)$. 
By Proposition \ref{j-inv2}, for our surface $h\colon Z_{c,2}\rightarrow \mathbb{P}^1$ all of the points $v_i$ such that $j(v_i)=0$ have ramification index $3$. Hence by \cite[Proposition 1.4]{nori}, for the corresponding $\alpha_i$, we have $\rho(\alpha_i)= \pm \mathrm{Id}$. However since $Z_{c,2}$ is elliptic modular, the subgroup $\Gamma$ cannot contain $-\mathrm{Id}$. Hence, for all $i$ such that $j(v_i)=0$, we must have $\rho(\alpha_i) =\mathrm{Id}$.  

Similarly, by Proposition \ref{j-inv2} all of the points $v_i$ such that $j(v_i)=1728$ have ramification index $2$. But then by \cite[Proposition 1.4]{nori}, for all such $i$, we have $\rho(\alpha_i)=\pm \mathrm{Id}$ and thus, in fact, $\rho(\alpha_i)=  \mathrm{Id}$.

Therefore the only points $v_i\in j^{-1}\{0,1728,\infty\}$ that contribute non-identity elements to $\Gamma$ are the points sent to $\infty$ by $j$. These are exactly the points of $\mathbb{P}^1$ underneath the singular fibers of $h\colon Z_{c,2}\rightarrow \mathbb{P}^1$.

From \cite[Proposition 4.2]{modsur}, if a point $v_i$ has singular fiber of type $I_b$ with $b>0$, then
\[\rho(\alpha_i) \in \left[ \left( \begin{array}{cc} 1&b\\ 0 & 1\end{array}\right)\right].\]
If a point $v_i$ has singular fiber of type $I_b^*$ with $b>0$, then 
\[\rho(\alpha_i) \in \left[ \left( \begin{array}{cc} -1&-b\\ 0 & -1\end{array}\right)\right].\]
Therefore, using Proposition \ref{sing}, in the case of $h\colon Z_{c,2}\rightarrow \mathbb{P}^1$, the point $0$ contributes a generator $A_0$ of $\Gamma$ in the conjugacy class of 
\[\left( \begin{array}{cc} 1&4\cdot 3^c\\ 0 & 1\end{array}\right)\]
in $SL(2,\mathbb{Z})$. Each point $\zeta^i$, for $\zeta$ a $3^c$-th root of unity, contributes a generator $A_{i+1}$ in the conjugacy class of 
\[\left( \begin{array}{cc} 1&1\\ 0 & 1\end{array}\right).\]
Finally, the point $\infty$ contributes a generator $A_\infty$ in the conjugacy class of
\[\left( \begin{array}{cc} -1&-3^c\\ 0 & -1\end{array}\right).\]
 
Then $\Gamma$ is the free group on these generators $A_0, A_1, \ldots, A_{3^c}, A_\infty$ subject to the relation $$A_0 A_1 \cdots A_{3^c}A_\infty=\mathrm{Id}.$$

Hence we indeed have that $\Gamma$ is the group $\Gamma_c$ defined above.

\end{proof}


\section{The threefold case}\label{threefold section}
We now consider the case of the threefold $X_{c,3}$, which by construction is a smooth model of the quotient $\overline{X}_{c,3}\coloneqq C_g\times Z_{c,2}/\langle \psi_g^{-1}\times \phi_{c,2}\rangle$. 
For $m$ sufficiently divisible the rational map  
\[f\colon C_{g}^3/G\dashrightarrow \mathbb{P}^1\subset  \mathbb{P}(H^0(C_{g}^3,K_{C_{g}^3}^{\otimes m}))^G\]
studied in Proposition \ref{fibration} can be resolved to obtain a morphism
\[\tilde{f}\colon Z_{c,3}\rightarrow \mathbb{P}^1\subset \mathbb{P}(H^0(Z_{c,3},K_{Z_{c,3}}^{\otimes m})).\]
In this threefold case, since the abundance conjecture is known, we may assume that $Z_{c,3}$ is a minimal model of $X_{c,3}$. In order to study the fibration $\tilde{f}$, we will need some information about the minimal model $Z_{c,2}$ and its elliptic fibration studied in Section \ref{surface section}. 


\subsection{The rational map $\beta\colon C_g\times Z_{c,2} \dashrightarrow \mathbb{P}(H^0(\overline{X}_{c,3},K_{\overline{X}_{c,3}}^{\otimes m}))$}
We now consider the product $C_g\times Z_{c,2}$. As in \eqref{maindiagram}, we have a diagram
\begin{equation}\label{betadiagram}
\begin{tikzcd}
\overline{X}_{c,3}\rar[-,dashrightarrow]&\mathbb{P}(H^0(\overline{X}_{c,3},K_{\overline{X}_{c,3}}^{\otimes m}))\\
C_g\times Z_{c,2}\uar[-,rightarrow]\arrow{r} &\mathbb{P}(H^0(C_g\times Z_{c,2},K_{C_g\times Z_{c,2}}^{\otimes m}))\uar[-,rightarrow]
\end{tikzcd}
\end{equation}
Here the horizontal maps are the Iitaka fibrations and the vertical map on the left is the quotient by the group $\langle \psi_g^{-1}\times \phi_{c,2}\rangle$.

We then consider the composition 
\[\beta\colon C_g\times Z_{c,2} \dashrightarrow \mathbb{P}(H^0(\overline{X}_{c,3},K_{\overline{X}_{c,3}}^{\otimes m})).\]

Recall from Theorem \ref{global} that global sections of $K_{C_g}^{\otimes m}$ are of the form 
$x^a\omega$  for  $0\le a\le m(g-1).$ Moreover, from the proof of Proposition \ref{koddim}, global sections $s_a$  of $K_{Z_{c,2}}^{\otimes m}$ for $0\le a \le m(g-1)$ correspond to global sections of $K_{C_g^2}^{\otimes m}$ of the form
$x_1^a\omega_1\times x_2^a\omega_2.$ Hence we may view $\beta\colon C_g\times Z_{c,2} \dashrightarrow \mathbb{P}(H^0(\overline{X}_{c,3},K_{\overline{X}_{c,3}}^{\otimes m}))$ as the rational map given by
\begin{equation}\label{beta map}
(z_1,z_2,z_3)\mapsto [\omega_1(z_1)\times s_0(z_2,z_3): (x_1\omega_1)(z_1)\times s_1(z_2,z_3):\cdots: (x_1^{m(g-1)}\omega_1)(z_1)\times s_{m(g-1)}(z_2,z_3)]
\end{equation}


\subsubsection{The images of $C_g\times T$ and $C_g\times S$}\label{beta S and T section} Our main interest here will be the images under $\beta$ of the surfaces of the form $C_g\times T$ and $C_g\times S$ in $C_g\times Z_{c,2}$, where recall from Section \ref{hirzebruchjung section} that $T$ and $S$ are the rational curves with self-intersection $-2$ and $-(g+1)$ respectively in the resolution of the Type II singular points of $C_g\times C_g/\langle \psi_g^{-1}\times \psi_g\rangle$. We provide a pictorial representation of the main facts about these images in Figure \ref{curves picture} below, where one color represents regions sent to $0$, one color represents regions sent to $\infty$, and one color represents regions where $\beta$ is undefined. 


\begin{figure}
\centering
\caption{Images of $C_g\times S$ and $C_g\times T$ under $\beta$. }
\label{curves picture}
\begin{tikzpicture}
\draw (0,0) rectangle (4,4)node[below=70pt, pos=0.5]{$S$}node[left=70pt, pos=0.5]{$C_g$};
\node [below] at (0.25,0) {$\sigma(0)$};
\node [below] at (3.75,0) {$\sigma(\infty)$};
\node [left] at (0,.25) {$P_i$};
\node [left] at (0,3.75) {$Q$};
\draw[red, line width=.1cm] (0,4)--(0,-0.05);
\draw[red, line width=.1cm](0,0)--(4,0);
\draw[blue, line width=.1cm] (0,4)--(4,4);
\fill[orange!30!yellow] (0,4) circle (.1cm);
\draw (4,0) rectangle (8,4)node[below=70pt, pos=0.5]{$T$};
\fill[blue] (4,0) rectangle (8,4);
\draw[blue, line width=.1cm] (4,4)--(8,4);
\draw[orange!30!yellow, line width=.1cm] (4,0)--(8,0);
\draw[thick,->] (4,-1)--(4,-3) node[pos=.5, left]{$\beta$};
\draw (0,-4)node[align=right,   below]{\\$0$}--(8,-4)node[align=right,   below]{\\$\infty$};
\fill[blue] (8,-4) circle (.1cm);
\fill[red] (0,-4) circle (.1cm);
\node at (4,-5){$\mathbb{P}^1$};
\end{tikzpicture}
\end{figure}


Begin by considering the elliptic fibration $h\colon Z_{c,2}\rightarrow \mathbb{P}^1$ from Section \ref{surface section}. As discussed in the proof of Proposition \ref{sing}, the map $h$ sends each curve $T$ to the point $\infty$ in $\mathbb{P}^1$, while the curves $S$ corresponds to sections of $h$. In particular, there exists a map $\sigma\colon \mathbb{P}^1\rightarrow S$ such that $h\circ \sigma =\mathrm{Id}_{\mathbb{P}^1}$. In particular, for local coordinates $(z_2,z_3)$ on $S$, we have $s_0(z_2,z_3)=0$ if and only if $(z_2,z_3)=\sigma(\infty)$ and $s_{m(g-1)}(z_2,z_3)=0$ if and only if $(z_2,z_3)=\sigma(0)$. 

Moreover, as discussed in the proof of Proposition \ref{fibration}, the only points of $C_g$ on which the form $x_1^a\omega_1$ can vanish are the points $P_1$, $P_2$ and $Q$. More precisely, as established in \eqref{E1}, \eqref{E2}, \eqref{E3}, we have $x_1^a\omega_1(P_i)=0$ if and only if  $1\le a\le m(g-1)$ and $x_1^a\omega_1(Q)=0$ if and only if $0\le a\le m(g-1)-1$. 

Let us first consider the surfaces of the form $C_g\times T$ in $C_g\times Z_{c,2}$. We know that the fibration $h\colon Z_{c,2}\rightarrow \mathbb{P}^1$ sends the curve $T$ to the point $\infty$, meaning that for local coordinates $(z_2,z_3)\in T$ we have $s_{m(g-1)}(z_2,z_3)\ne0$ and $s_a(z_2,z_3)=0$ for all other $1\le a\le m(g-1)-1$. In particular, using the description of $\beta$ in \eqref{beta map}, for any point $W\in C_g$ such that $W\ne P_i$ for $i\in \{1,2\}$, we have $\beta(W\times T)=[0:\cdots:0:1]$, which corresponds to the point $\infty\in \mathbb{P}^1\subset \mathbb{P}(H^0(\overline{X}_{c,3},K_{\overline{X}_{c,3}}^{\otimes m}))$. In other words, we have $\beta((C_g-\{P_1,P_2\})\times T)=\infty$ and $\beta$ is not defined on the curves $P_1\times T$ and $P_2\times T$. 

Now consider the surfaces of the form $C_g\times S$ in $C_g\times Z_{c,2}$. Let $U=C_g-\{P_1,P_2,Q\}$. Then since $x_1^a\omega_1$ does not vanish on $U$ for any $0\le a\le m(g-1)$ and $f_2$ sends $S$ surjectively onto $\mathbb{P}^1$, we have that $\beta$ sends $U\times S$ surjectively onto $\mathbb{P}^1$. In particular, we have $\beta(U\times \sigma(0))=0$ and $\beta(U\times \sigma(\infty))=\infty$. 

It thus remains to determine what happens to the curves of the form $P_1\times S$, $P_2\times S$, and $Q\times S$. However we know that for local coordinates $(z_2,z_3)$ on $S$, we have $s_0(z_2,z_3)=0$ if and only if $(z_2,z_3)=\sigma(\infty)$ and $s_{m(g-1)}(z_2,z_3)=0$ if and only if $(z_2,z_3)=\sigma(0)$. It follows that $\beta(P_i\times (S-\sigma(\infty))=[1:0:\cdots:0]$, which corresponds to the point $0\in \mathbb{P}^1\subset \mathbb{P}(H^0(\overline{X}_{c,3},K_{\overline{X}_{c,3}}^{\otimes m}))$, and $\beta(Q\times (S-\sigma(0))=[0:\cdots:0:1]$, which corresponds to the point $\infty\in \mathbb{P}^1$. Hence the only points of $C_g\times S$ on which $\beta$ is not defined are the points $Q\times \sigma(0)$ and $P_i\times \sigma(\infty)$ for $i\in \{1,2\}$. 

Note however that since the curves $S$ and $T$ intersect at the point $\sigma(\infty) \in S$, the assertion that $\beta$ is not defined at the points $P_i\times \sigma(\infty)$ is already covered by the previous assertion that $\beta$ is not defined on the curves $P_i\times T$.  

In summary, we have established the following about the rational map $\beta\colon C_g\times Z_{c,2} \dashrightarrow \mathbb{P}^1$:
\begin{enumerate}
\item $\beta((C_g-\{P_1,P_2\})\times T)=\infty$
\item $\beta$ sends $(C_g-\{P_1,P_2,Q\})\times S$ surjectively onto $\mathbb{P}^1$ and in particular 
\begin{enumerate}
\item $\beta(U\times \sigma(0))=0$ 
\item $\beta(U\times \sigma(\infty))=\infty$
\end{enumerate}
\item $\beta(P_i\times (S-\sigma(\infty))=0$ for $i\in \{1,2\}$
\item $\beta(Q\times (S-\sigma(0))=\infty$
\item $\beta$ is undefined on the curves $P_1\times T$ and $P_2\times T$ and on the point $Q\times \sigma(0)$
\end{enumerate}

\subsection{The K3 fibration  $\tilde{f}\colon Z_{c,3}\rightarrow \mathbb{P}^1$}
We will now use this understanding of the rational map $\beta$ in order to show that the fibration $\tilde{f}\colon Z_{c,3}\rightarrow \mathbb{P}^1$ is indeed a K3 fibration. 

\begin{thm}\label{main theorem}
For $c\ge 2$ and $m$ sufficiently divisible, the general smooth fibers of the fibration 
\[\tilde{f}\colon Z_{c,3}\rightarrow \mathbb{P}^1\subset \mathbb{P}(H^0(Z_{c,3},K_{Z_{c,3}}^{\otimes m}))\] are K3 surfaces with Picard rank $19$. Moreover, the fibers of $\tilde{f}$ 
 above the $3^c$ roots of $t^{3^c}+1$ are (possibly singular) K3 surfaces with Picard rank $20$. 
\end{thm}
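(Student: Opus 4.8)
The plan is to realize each fiber of $\tilde f$ as an elliptic surface and to read off its Picard number from the Shioda-Tate formula, so that the jump from $19$ to $20$ over the roots of $t^{3^c}+1$ is produced by a single extra fiber component. Since $\overline X_{c,3}=(C_g\times Z_{c,2})/\langle\psi_g^{-1}\times\phi_{c,2}\rangle$, I would study the surface $\mathcal E_t=\{(w,z)\in C_g\times Z_{c,2}: x_1(w)\,h(z)=t\}$, where $h\colon Z_{c,2}\to\mathbb P^1$ is the elliptic fibration of Proposition \ref{sing}. Projecting to $w\in C_g$ exhibits $\mathcal E_t$ as an elliptic surface whose fiber over $w$ is the elliptic curve $h^{-1}(t/x_1(w))$, so that $S_t=\tilde f^{-1}(t)$ is a resolution of $\mathcal E_t/\langle\psi_g^{-1}\times\phi_{c,2}\rangle$ and the induced map to $C_g/\langle\psi_g\rangle\cong\mathbb P^1$ is an elliptic fibration on $S_t$. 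To see that $S_t$ is a K3 surface I would observe that its fibers have Kodaira dimension $0$ (as fibers of an Iitaka fibration), and that the canonical $G$-invariant $3$-forms $x_1^ax_2^ax_3^a\,\omega_1\wedge\omega_2\wedge\omega_3$ all restrict to $F_t$ as scalar multiples $t^a$ of a single nonzero holomorphic $2$-form (because $x_1x_2x_3=t$ is constant on $F_t$), forcing $p_g(S_t)=1$; the existence of an elliptic fibration over $\mathbb P^1$ with nonconstant $j$-invariant then gives $q(S_t)=0$ and rules out the abelian, Enriques, and bielliptic cases, so $S_t$ is K3.

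Next I would compute the singular fibers of $S_t\to\mathbb P^1$. By Proposition \ref{sing}, $h$ has an $I_{4\cdot 3^c}$ fiber over $0$, an $I^*_{3^c}$ fiber over $\infty$, and $3^c$ fibers of type $I_1$ over the $3^c$-th roots of unity. Singular fibers of $S_t$ occur over the images in $\mathbb P^1$ of the points $w$ with $t/x_1(w)\in\{0,\infty,\zeta^i\}$: over the branch points $\overline Q,\overline P_1,\overline P_2$ of $C_g\to\mathbb P^1$ (coming from $x_1=\infty,0$, hence independent of $t$), and over the images of the points with $x_1(w)=t\zeta^{-i}$. For general $t$ these last points form two free $\langle\psi_g\rangle$-orbits, one for each hyperelliptic sheet, so they yield two fibers of type $I_1$. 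Granting the first part of the theorem, the Shioda-Tate formula \cite[Corollary 6.13]{elsur} then records a total reducible-fiber contribution $\sum_v(m_v-1)=17$ together with Mordell-Weil rank $0$, giving $\rho(S_t)=2+17+0=19$.

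The final statement follows from analyzing $t$ with $t^{3^c}=-1$. For such $t$ the equations $x_1(w)=t\zeta^{-i}$ are solved exactly by the Weierstrass points of $C_g$: writing $\zeta=\xi^2$ for $\xi$ a primitive $2\cdot 3^c$-th root of unity, the condition $t^{3^c}=-1=\xi^{3^c}$ forces $t$, and hence each $t\zeta^{-i}$, to be an odd power of $\xi$, which is a root of $x_1^{2g+1}+1$, so $y_1=0$ there. Thus the two hyperelliptic sheets collide into a single free $\langle\psi_g\rangle$-orbit, $x_1$ acquires a critical point, and $w\mapsto t/x_1(w)$ becomes $2:1$ ramified over the $I_1$ locus; the local monodromy squares and the two $I_1$ fibers of the general case merge into a single $I_2$ fiber. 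This is precisely the isolated surface node of $F_t$ exhibited in Proposition \ref{fibration}, whose minimal resolution inserts the second component of the $I_2$. Hence $\sum_v(m_v-1)$ increases by exactly $1$, to $18$, while the Euler number $\chi_{\mathrm{top}}(S_t)=24$ is preserved since $e(I_2)=2=2\,e(I_1)$. As $\rho\le 20$ for a K3, the Shioda-Tate formula forces the Mordell-Weil rank to remain $0$ and yields $\rho(S_t)=2+18+0=20$; the fiber of $\tilde f$ may retain the node if the resolution of $Z_{c,3}$ does not separate it, which accounts for the phrase ``possibly singular.''

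The main obstacle I anticipate is the local analysis over the branch points $\overline Q,\overline P_1,\overline P_2$: determining the precise Kodaira types of the fibers of $S_t\to\mathbb P^1$ obtained by quotienting the $I_{4\cdot 3^c}$ and $I^*_{3^c}$ fibers of $h$ by the fixed-point action of $\langle\psi_g\rangle$ (acting through $\phi_{c,2}$) and then resolving. This computation is what pins down $\chi_{\mathrm{top}}(S_t)=24$, the baseline contribution $17$, and the vanishing of the Mordell-Weil rank independently of the first part of the theorem. Once those local types are settled, the passage to $\rho=20$ over the roots of $t^{3^c}+1$ is the clean $+1$ described above, coming from the single $I_1+I_1\rightsquigarrow I_2$ degeneration detected by Proposition \ref{fibration}.
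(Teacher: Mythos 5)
Your strategy is genuinely different from the paper's: you fiber each K3 fiber $S_t$ in elliptic curves over $C_g/\langle\psi_g\rangle\cong\mathbb{P}^1$ and aim to read off $\rho$ from the Shioda--Tate formula, whereas the paper never puts an elliptic fibration on $\mathcal{F}_t$ at all. Instead it directly exhibits $21$ curves on $\mathcal{F}_t$ (two $9$-cycles of rational $(-2)$-curves coming from the curves $C_g\times P_i\times Q$ and points $Q\times P_i\times Q$, etc., plus three genus-$1$ curves $\varepsilon_j$ from the divisor classes $[\mathrm{pt}\times C_g\times C_g]$), computes that their intersection matrix has rank at least $8+8+3=19$, and concludes $\rho=19$ from the upper bound for a nonisotrivial family; the K3-ness then follows from $\kappa=0$ together with $\rho\ge 16$ and surface classification. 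Your route is attractive because it would also pin down the transcendental lattice, but as written it does not close.

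The genuine gap is the one you name yourself in your final paragraph: the Kodaira types of the fibers of $S_t\to\mathbb{P}^1$ over the branch points $\overline{P}_1,\overline{P}_2,\overline{Q}$, i.e., the quotients of the $I_{4\cdot 3^c}$ and $I^*_{3^c}$ fibers of $h$ by the $\langle\psi_g\rangle$-action followed by resolution, are never computed. These are exactly the inputs that determine $\chi_{\mathrm{top}}(S_t)$, the baseline $\sum_v(m_v-1)$, and the Mordell--Weil rank, and this local analysis is not routine --- the quotient is by a cyclic group of order $3^c$ with nontrivial fixed loci meeting the reducible fibers, which is where all the delicate bookkeeping of Sections 3--5 of the paper lives in the surface case. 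Worse, the way you bridge this gap for the first assertion is circular: you write ``granting the first part of the theorem, the Shioda--Tate formula then records $\sum_v(m_v-1)=17$ \ldots giving $\rho(S_t)=19$,'' which assumes $\rho=19$ in order to conclude $\rho=19$. The $I_1+I_1\rightsquigarrow I_2$ mechanism you propose for the jump to $\rho=20$ over the roots of $t^{3^c}+1$ is plausible and consistent with the isolated node found in Proposition \ref{fibration} (your orbit count of the points with $x_1(w)^{2g+1}+1=0$ is correct), and if the baseline contribution $17$ were established, the inequality $\rho\le 20$ would indeed force $r=0$ and $\rho=20$ there. But both conclusions stand or fall on the uncomputed fiber types over the branch points, so neither part of the theorem is actually proved by the argument as given.
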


\begin{proof}
For $t\in \mathbb{P}^1$, let $\mathcal{F}_t$ be a general smooth fiber of the fibration $\tilde{f}\colon Z_{c,3}\rightarrow \mathbb{P}^1$. In order to show that $\mathcal{F}_t$ is a K3 surface of Picard rank $19$ we will identify a configuration of $21$ curves on $\mathcal{F}_t$ whose intersection matrix has rank $19$. Since the Picard rank $\rho$ of a general smooth fiber of $\tilde{f}$ can be at most $19$, this yields that $\rho(F_t)=19$. 

We will identify these $21$ curves on $\mathcal{F}_t$ by first identifying $21$ hypersurfaces in $Z_{c,3}$ that surject onto $\mathbb{P}^1$ via $\tilde{f}$. Begin by considering the diagram \eqref{betadiagram} and, using the notation of  Section \ref{beta S and T section}, two hypersurfaces of the form $C_g\times T$ and $C_g\times S$ in $C_g\times Z_{c,2}$.  As in Section \ref{beta S and T section}, let $\sigma\colon \mathbb{P}^1\rightarrow S$ denote the section corresponding to the curve $S$ of the ellitpic fibration $f_2\colon Z_{c,2}\rightarrow \mathbb{P}^1$. Hence the surfaces of the form $C_g\times T$ and $C_g\times S$ intersect along the curve $C_g\times \sigma(\infty)$. 

Recall the calculations of Section \ref{beta S and T section} about the images of the surfaces $C_g\times T$ and $C_g\times S$ under the rational map
\[\beta\colon C_g\times Z_{c,2} \rightarrow \mathbb{P}(H^0(C_g\times Z_{c,2},K_{C_g\times Z_{c,2}}^{\otimes m})\dashrightarrow \mathbb{P}^1\subset \mathbb{P}(H^0(\overline{X}_{c,3},K_{\overline{X}_{c,3}}^{\otimes m})).\]
In particular, recall that we established that the open subset 
\[(C_g\times S)-\{P_1\times \sigma(0), P_2\times \sigma(0), Q\times \sigma(\infty)\}\subset C_g\times S\]
is sent surjectively onto $\mathbb{P}^1$ by $\beta$. Consider the image of this open subset in $Z_{c,3}$ and let $\mathcal{G}$ denote its closure. Then by the diagram \eqref{betadiagram} and since $\tilde{f}\colon Z_{c,3}\rightarrow \mathbb{P}^1$ is a morphism, we have that $\mathcal{G}\subset Z_{c,3}$ surjects onto $\mathbb{P}^1$ via $\tilde{f}$.

Consider the curve on the fiber $\mathcal{F}_t$ given by $\gamma=\mathcal{F}_t\cap \mathcal{G}$. Recall that the curve $S$ in $Z_{c,2}$ is obtained from a point of the form $P_i\times Q$ in $C_g^2$. In particular, the surface $\mathcal{G}$ lies in the image in $Z_{c,3}$ of the curve $C_g\times P_i\times Q$ in $C_g^3$. As discussed in the proof of Proposition \ref{fibration}, the fiber $\mathcal{F}_t$ arises as the image in $Z_{c,3}$ of the hypersurface $F_t$ in $C_g^3$ cut out by the equation $x_1x_2x_3=t$. Note that the intersection of $F_t$ and the curve $C_g\times P_i\times Q$ has dimension $0$. Hence the curve $\gamma$ is contracted to a point in the singular variety $C_g^3/G$. 

Now note that from the construction of the fibration $\tilde{f}$ as the Iitaka fibration of $Z_{c,3}$ we know that the fiber $\mathcal{F}_t$ has Kodaira dimension $0$. In particular, from the classification of algebraic surfaces, we know that $K_{\mathcal{F}_t}$ is torsion. It follows from the adjunction formula that because $\gamma$ is a curve on $\mathcal{F}_t$ that can be contracted to a point, the curve $\gamma$ must be rational with self-intersection $-2$. 

Hence we have identified a rational $(-2)$-curve on the fiber $\mathcal{F}_t$ arising from a curve of the form $C_g\times P_i\times Q$ in $C_g^3$. Symmetrically, any permutation of these factors will also yield a $(-2)$-curve on $\mathcal{F}_t$. There are thus $6$ such curves for each choice of $i\in \{1,2\}$, meaning there are $12$ such curves total on $\mathcal{F}_t$. 

Now recall from Section \ref{beta S and T section} that the rational map $\beta$ is not defined on curves of the form $P_1\times T$ and $P_2\times T$ and on the point $Q\times \sigma(0)$. In order to understand the images of these in $Z_{c,3}$, we would like to understand their action by the automorphism $\psi_g^{-1}\times \phi_{c,2}$. Recall the description of the local action of $\phi_{c,2}$ on the curves $T$ and $S$ established in \eqref{phi weights} of Section \ref{S and T phi}. Note in particular that the curve $T$ lies in the fixed locus of $\phi_{c,2}$. Hence, the curves $P_1\times T$ and $P_2\times T$ lie in the fixed locus of $\psi_g^{-1}\times \phi_{c,2}$. Blowing-up this part of the fixed locus, will then yield exceptional divisors of codimension $2$ in $Z_{c,3}$. Namely, the images in $Z_{c,3}$ of the curves of the form $P_1\times T$ and $P_2\times T$ will intersect $\mathcal{F}_t$ in at most points and so will not contribute to the Picard rank of $\mathcal{F}_t$.

It thus remains to consider the image in $Z_{c,3}$ of a point of the form $Q\times \sigma(0)$. From the description of the local action of $\phi_{c,2}$ on $S$ given in \eqref{phi weights} of Section \ref{S and T phi}, we have that $\psi_g^{-1}\times \phi_{c,2}$ acts locally around the point $Q\times \sigma(0)$ with $\mathbb{Z}/3^c\mathbb{Z}$-weights $(-g,1,g)$. Note that this implies that $Q\times \sigma(0)$ is an isolated terminal singularity in the quotient $(C_g\times Z_{c,2})/\langle \psi_g^{-1}\times \phi_{c,2}\rangle$ \cite[Remark 2.5(i)]{morrison}. In particular, the resolution of this point in $Z_{c,3}$ has codimension $1$. Moreover, we established in Section \ref{beta S and T section} that the rational map $\beta$ sends the curve $(C_g-Q)\times \sigma(0)$ to the point $0\in \mathbb{P}^1$ and sends the curve $Q\times (S-\sigma(0))$ to the point $\infty\in \mathbb{P}^1$. Hence by continuity since the morphism $\tilde{f}\colon Z_{c,3}\rightarrow \mathbb{P}^1$ is defined everywhere, the resolution of $Q\times \sigma(0)$ in $Z_{c,3}$ surjects onto $\mathbb{P}^1$ via $f$. In particular, this resolution contains some hypersurface $\mathcal{H}$ surjecting onto $\mathbb{P}^1$ via $\tilde{f}$.

Consider the curve on the fiber $\mathcal{F}_t$ given by $\delta=\mathcal{F}_t\cap \mathcal{H}$. Observe that since the hypersurface $\mathcal{H}$ can be contracted to a point, so can the curve $\delta$. Hence as in the case of the curve $\gamma$, because $K_{\mathcal{F}_t}$ is torsion, the adjunction formula yields that $\delta$ is a rational $(-2)$-curve on $\mathcal{F}_t$. 

Hence we have identified a rational $(-2)$-curve on $\mathcal{F}_t$ arising from a point of the form $Q\times P_i\times Q$ in $C_g^3$. Symmetrically, any permutation of these factors will also yield a $(-2)$-curve on $\mathcal{F}_t$. There are thus $3$ such curves for each choice of $i\in \{1,2\}$, meaning there are $6$ such curves total on $\mathcal{F}_t$. 

Thus for the moment we have identified $18$ rational $(-2)$-curves on $\mathcal{F}_t$, where for each choice of $i\in \{1,2\}$, there are $9$ such curves: six of the form $\gamma$ and three of the form $\delta$. 
We now determine the configuration of these $18$ curves. 

For a given choice of $i\in \{1,2\}$, let $\gamma_1,\ldots,\gamma_6$ denote the six $(-2)$-curves on $F_t$ associated respectively to the curves $C_g\times P_i\times Q$, $P_i\times C_g\times Q$, $P_i\times Q\times C_g$, $C_g\times Q\times P_i$, $Q\times C_g \times P_i$, and $Q\times P_i\times C_g$ in $C_g^3$. Let $\delta_1$, $\delta_2$, and $\delta_3$ denote the three $(-2)$-curves on $F_t$ associated respectively to the points $P_i\times Q\times Q$, $Q\times Q\times P_i$, and  $Q\times P_i\times Q$ in $C_g^3$. Hence $\gamma_1$ meets $\gamma_2$, which possibly meets $\delta_1$, which possibly meets $\gamma_3$, which then meets $\gamma_4$, which possibly meets $\delta_2$, which possibly meets $\gamma_5$, which then meets $\gamma_6$, which possibly meets $\delta_3$, which possibly meets $\gamma_1$. Let $M$ denote the configuration matrix of these $9$ curves $\gamma_1$, $\gamma_2$, $\delta_1$, $\gamma_3$, $\gamma_4$, $\delta_2$, $\gamma_5$, $\gamma_6$. If all of the $\delta_j$ meet their neighboring $\gamma_k$, then these $9$ curves form a cycle which has intersection matrix the rank-$8$ circulant matrix
\[\left(\begin{array}{ccccccccc}
-2&1&0&0&0&0&0&0&1\\
1&-2&1&0&0&0&0&0&0\\
0&1&-2&1&0&0&0&0&0\\
0&0&1&-2&1&0&0&0&0\\
0&0&0&1&-2&1&0&0&0\\
0&0&0&0&1&-2&1&0&0\\
0&0&0&0&0&1&-2&1&0\\
0&0&0&0&0&0&1&-2&1\\
1&0&0&0&0&0&0&1&-2
\end{array}
\right).
\]
Note that if any of the $\delta_j$ fail to meet a neighboring $\gamma_k$, then the resulting intersection matrix $M$ has rank $9$. Namely, we have $\mathrm{rank}(M)\ge 8$. Note that this already yields that the Picard rank $\rho$ of the fiber $\mathcal{F}_t$ is at least $16$. Since we know the surface $\mathcal{F}_t$ has Kodaira dimension $0$, it follows from the classification of algebraic surfaces that $\mathcal{F}_t$ is a K3 surface. 

We now identify three additional curves on the smooth fiber $\mathcal{F}_t$. Consider the three classes in $\mathrm{Pic}(C_g^3)$ given by $[\mathrm{pt}\times C_g\times C_g]$, $[C_g\times \mathrm{pt}\times C_g]$, and $[ C_g\times C_g\times \mathrm{pt}]$. Note that these classes are $G$-invariant and thus each descends to yield a hypersurface class in $Z_{c,3}$. Denote these three hypersurface classes by $[D_1]$, $[D_2]$, $[D_3]$. Moreover, a general hypersurface $H\in [D_i]$ intersects the smooth fiber $\mathcal{F}_t$ in a curve $\varepsilon_j$. Note that the curve $\varepsilon_j$ will not intersect any of the previously identified $(-2)$-curves of the form $\gamma$ or $\delta$. Moreover since  the fiber $\mathcal{F}_t$ arises as the image in $Z_{c,3}$ of the hypersurface $F_t$ in $C_g^3$ cut out by the equation $x_1x_2x_3=t$, it follows that $\varepsilon_j.\varepsilon_k=2$ where the two intersection points correspond to the two points of $C_g$ with the same $x$-value.  Thus we need only compute the self-intersections $\varepsilon_j^2$ for $j\in \{1,2,3\}$.

To compute these self-intersections $\varepsilon_j^2$, let us without loss of generality consider the curve $\varepsilon_1=H\cap \mathcal{F}_t$ for $H$ a general hypersurface in $[D_1]$, where $[D_1]$ arises from the class $[\mathrm{pt}\times C_g\times C_g]$ in $\mathrm{Pic}(C_g^3)$.  Let $\widetilde{H}\in [\mathrm{pt}\times C_g\times C_g]$ be in the preimage of $H$ in $C_g^3$ and let $\widetilde{\varepsilon}_1=F_t\cap \widetilde{H}$ be in the preimage of $\varepsilon_1$, where recall that $\mathcal{F}_t$ arises as the image in $Z_{c,3}$ of the hypersurface $F_t$ in $C_g^3$. Let $\overline{\varepsilon}_1$ be the image of $\widetilde{\varepsilon}_1$ in $C_g\times Z_{c,2}$. Namely, we have a sequence of $\mathbb{Z}/3^c\mathbb{Z}$-quotients given by $\widetilde{\varepsilon}_1\rightarrow \overline{\varepsilon}_1\rightarrow \varepsilon_1$.

Suppose that for $\widetilde{H}\in [\mathrm{pt}\times C_g\times C_g]$ the chosen point in the first factor is given by local coordinates $(x_1,y_1)=(\alpha_0,\beta_0)$. Then the curve  $\widetilde{\varepsilon}_1$ in $C_g^3$ is cut out by the equations $x_1=\alpha_0$ and $x_2x_3=\frac{t}{\alpha_0}$ (note that we may assume $\alpha_0\ne 0$). This yields an isomorphism between the image curve $\overline{\varepsilon}_1$ in $C_g\times Z_{c,2}$ and the fiber of the two-dimensional Iitaka fibration $f_2\colon Z_{c,2}\rightarrow \mathbb{P}^1$ above the point $\frac{t}{\alpha_0}\in \mathbb{P}^1$. In particular, the curve $\overline{\varepsilon}_1$ has genus $1$. It follows that the curve $\varepsilon_1$ has genus at most $1$. 

However, since the curve $\varepsilon_1$ lies on the surface $\mathcal{F}_t$, which we have established is a K3 surface, the adjunction formula yields
\[K_{ \varepsilon_1}=(K_{\mathcal{F}_t}+H)|_H=H|_H.\]
In particular, we have $2g(\varepsilon_1)-2\ge 0$, meaning that $g(\varepsilon_1)\ge 1$. Hence the curve $\varepsilon_1$ has genus equal to $1$. Applying the adjunction formula on a K3 surface again then yields that $\varepsilon_1$ has self-intersection $0$.  An identical argument yields that the same is true of the curves $\varepsilon_2$ and $\varepsilon_3$.

Therefore, the intersection matrix of the configuration of three curves $\varepsilon_1$, $\varepsilon_2$, and $\varepsilon_3$ is the rank $3$ matrix 
\[N=\left(\begin{array}{ccc}
0 & 2&2\\
2&0&2\\
2&2&0
\end{array}
\right)
.\]

Namely, we have identified a configuration of $21$ curves on a general  smooth fiber $\mathcal{F}_t$ of the fibration $\tilde{f}\colon Z_{c,3}\rightarrow \mathbb{P}^1$ having intersection matrix 
\[I=\left(\begin{array}{ccc}
M & 0&0\\
0&M&0\\
0&0&N
\end{array}
\right)
.\]
In particular, the matrix $I$ has rank at least $8+8+3=19$, so the Picard rank $\rho$ of $\mathcal{F}_t$ is at least $19$. However, since the fiber $\mathcal{F}_t$ varies with $t$, we know that $\rho(\mathcal{F}_t)$ is at most $19$, meaning we have $\rho(\mathcal{F}_t)=19$.

The statement about the fibers of $\tilde{f}$ above the roots of $t^{3^c}+1$ follows from Proposition \ref{fibration} and its proof. Indeed, we know that a fiber of $f\colon C_{g}^3/G\dashrightarrow \mathbb{P}^1\subset  \mathbb{P}(H^0(C_{g}^3,K_{C_{g}^3}^{\otimes m}))^G$ above a point $t\in \mathbb{P}^1$ such that $t^{3^c}+1$ has an isolated singular point at the image in $C_{g}^3/G$ of a point in $C_g^3$ of the form  $((x_1, 0), (x_2, 0), (x_3, 0))$, where the $x_j$ are of the form $\xi^{2\gamma_j+1}$ for $\xi$ a primitive $2\cdot 3^c$-th root of unity and $0\le \gamma_j\le 3^c-1$. In particular, $x_j\ne \pm 1$ and so while the corresponding point in  $C_g^3/G$ is a singularity of the fiber of $f$, it is not a singular point of $C_g^3/G$. Thus depending on the birational model $Z_{c,3}$, such a point either remains a singular point of the fiber of $\tilde{f}$ above $\zeta^i$ or aquires a curve resolving the singularity. Since the fibers of $\tilde{f}$ above $\zeta^i$ contain the configuration of $21$ curves identified above, it follows that the (after possible resolving the singular point), these fibers have Picard number $20$.

\end{proof}

\subsection{Moduli of K3 surfaces}
Here we follow \cite[Section 2.3]{sarti}. Recall that if $S$ is an algebraic K3 surface then the group $H^2(S,\mathbb{Z})$ together with its intersection pairing has the structure of a unimodular lattice isometric to the K3 lattice 
\[\Lambda\coloneqq E_8(-1)^{\oplus 2}\oplus U^{\oplus 3},\] where $E_8$ is the unique postive-definite, even, unimodular lattice of rank $8$ and $U$ is the hyperbolic plane given by 
$U\coloneqq \left( \begin{array}{cc}0&1\\1&0 \end{array}\right)$.  Moreover, there is a surjective period map to the coarse moduli space given by the quadric in $\mathbb{P}^{21}$
\[\Omega\coloneqq \{ [\omega]\in \mathbb{P}(\Lambda\otimes \mathbb{C}\mid (\omega,\omega)=0,\ (\omega,\overline{\omega})>0\}.\]
Namely if $\omega_S\in H^{2,0}(S)$, then $\omega_S$ yields a class in $\Omega$ and conversely every point in $\Omega$ is the period point of some K3 surface. If $L\subset \Lambda$ is some sublattice of signature $(1,\rho-1)$, then the subspace
\[\Omega_L\coloneqq \{[\omega]\in \Omega\mid (\omega,\lambda)=0 \text{ for all } \lambda\in L\}\]
has dimension $20-\rho=20-\mathrm{rank}(L)$. 

If $L$ is the Neron-Severi group $NS(S)\coloneqq H^2(S,\mathbb{Z})\cap H^{1,1}(S)$ and $\rho=19$, then the embedding of the transcendental lattice $T_S\coloneqq NS(S)^{\perp}$ in $\Lambda$ is unique up to an isometry of $\Lambda$ \cite[Corollary 2.10]{morrison2} and the moduli curve $\Omega_L$ is determined by $T_S$. 

Note moreover that in this case, the CM points of the curve $\Omega_L$ correspond to singular K3 surfaces, meaning K3 surfaces with Picard rank $20$ whose Neron-Severi group contains the Neron-Severi group of a general member of the curve $\Omega_L$. 

We thus obtain the following corollary from Theorem \ref{main theorem}.

\begin{cor}\label{final cor}
The one-dimensional family $\tilde{f}\colon Z_{c,3}\rightarrow \mathbb{P}^1$ of K3 surfaces  of Picard rank $19$ is a finite cover of the universal family of the moduli curve $\Omega_{\mathcal{F}_t}$ isomorphic to  $\mathbb{P}^1$ parametrizing K3 surfaces with transcendental lattice $T_{\mathcal{F}_t}$, where $\mathcal{F}_t$ denotes a general fiber of $\tilde{f}$, and  the fibers of $\tilde{f}$ at the $3^c$ roots of the polynomial  $t^{3^c}+1$ correspond to CM points of $\Omega_{\mathcal{F}_t}$. 
\end{cor}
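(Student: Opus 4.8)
The plan is to exhibit $\tilde{f}\colon Z_{c,3}\to\mathbb{P}^1$ as the base change of the universal family of K3 surfaces with transcendental lattice $T_{\mathcal{F}_t}$ along a period map, and to read off all three assertions from this description together with the non-isotriviality of the family. First I would restrict $\tilde{f}$ to the open locus $\mathbb{P}^1\setminus\Delta$ over which the fibers are smooth K3 surfaces of Picard rank $19$; this locus is nonempty and cofinite by Theorem \ref{main theorem}. Over it the sheaf $R^2\tilde{f}_*\mathbb{Z}$ is a local system of copies of the K3 lattice $\Lambda$, and since the Picard rank is constantly $19$ the transcendental parts $T_{\mathcal{F}_t}=NS(\mathcal{F}_t)^{\perp}$ form a rank-$3$ sub-local-system carrying a weight-two variation of Hodge structure. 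Because the monodromy acts through lattice isometries, the abstract isometry type of $T_{\mathcal{F}_t}$ is independent of $t$; call it $T$. By \cite[Corollary 2.10]{morrison2} the primitive embedding $T\hookrightarrow\Lambda$ is unique up to isometry, so all the period points lie on the single $1$-dimensional moduli curve $\Omega_L$ with $L=T^{\perp}$, and marking the Hodge structures defines a morphism $p\colon\mathbb{P}^1\setminus\Delta\to\Gamma\backslash\Omega_L$ to the corresponding arithmetic quotient.

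Next I would prove that $p$ is non-constant. Were $p$ constant, the generic Picard rank $19$ would be attained by every fiber, with no jumping; but by Theorem \ref{main theorem} the fibers over the $3^c$ roots of $t^{3^c}+1$ have Picard rank $20$, so the Picard rank is not constant on the family. By Noether--Lefschetz-type considerations this forces $p$ to be non-constant: a fixed $(2,0)$-line cannot become orthogonal to a new integral class only at isolated points. Since the target $\Gamma\backslash\Omega_L$ is a curve, a non-constant $p$ is dominant and extends (any rational map from a smooth projective curve to a projective variety is a morphism) to a finite surjection $\overline{p}\colon\mathbb{P}^1\to X$ onto the Baily--Borel compactification $X$ of $\Gamma\backslash\Omega_L$, a smooth projective curve. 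A curve admitting a finite surjection from $\mathbb{P}^1$ is rational by L\"{u}roth's theorem, so $X\cong\mathbb{P}^1$, giving the stated isomorphism $\Omega_{\mathcal{F}_t}\cong\mathbb{P}^1$.

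For the finite-cover statement on total spaces I would invoke the Torelli theorem: over $\mathbb{P}^1\setminus\Delta$ the fiber $\mathcal{F}_t$ is determined up to isomorphism by its period $\overline{p}(t)$, so $Z_{c,3}$ and the fiber product $\mathbb{P}^1\times_{X}\mathcal{U}$ of the universal family $\mathcal{U}\to X$ agree fiberwise, hence are birational and isomorphic over the smooth locus. The projection $Z_{c,3}\to\mathcal{U}$ is then the base change of $\overline{p}$ and is finite of degree $\deg\overline{p}$, exhibiting $Z_{c,3}$ as a finite cover of the universal family. Finally, for the CM assertion I would use the description recalled before the corollary: a point of $\Omega_L$ is a CM point exactly when its K3 surface is singular, i.e. has Picard rank $20$ with Neron-Severi group containing that of a general member. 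The proof of Theorem \ref{main theorem} shows the rank-$19$ configuration of $21$ curves persists on the fibers over the roots of $t^{3^c}+1$, so their Neron-Severi groups contain that of $\mathcal{F}_t$ while the rank jumps to $20$; hence $\overline{p}$ sends these $3^c$ points to CM points of $\Omega_{\mathcal{F}_t}$.

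The main obstacle I anticipate is making the passage between the abstract moduli curve and the concrete family rigorous: controlling the extension of $p$ across $\Delta$ to a genuine finite morphism at the reducible and possibly singular Picard-rank-$20$ fibers, and identifying $Z_{c,3}$ with the base change of $\mathcal{U}$ despite the non-fineness of K3 moduli (so that ``fiberwise isomorphic'' is promoted to an honest finite cover of total spaces). Non-isotriviality itself is cheap given the Picard-rank jump, but pinning down $X\cong\mathbb{P}^1$ rather than merely a rational curve, and handling the degenerate fibers where $\tilde{f}$ is not smooth, is the delicate part.
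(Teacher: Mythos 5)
Your proposal is correct and follows essentially the same route as the paper, which states Corollary \ref{final cor} as an immediate consequence of Theorem \ref{main theorem} together with the recalled facts that a rank-$19$ Neron--Severi group determines a unique one-dimensional period domain $\Omega_L$ (via Morrison's uniqueness of the embedding of $T_{\mathcal{F}_t}$ into $\Lambda$) and that CM points of $\Omega_L$ correspond to the Picard-rank-$20$ fibers. The paper offers no further argument beyond this, so your expanded treatment (period map on the smooth rank-$19$ locus, non-isotriviality from the Picard-rank jump, Torelli, L\"uroth) simply makes explicit the steps the paper leaves implicit.
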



\textbf{Acknowledgements.} The author would like to thank Jaclyn Lang, Christopher Lyons, Matt Kerr, Emanuele Macr\`{i}, Luca Schaffler, Stefan Schreieder, Burt Totaro, and Charles Vial for helpful discussions and comments in the preparation of this article. 
The author gratefully acknowledges support of the National Science Foundation through awards DGE-1144087 and DMS-1645877.

\bibliography{Schreieder.bib}

\begin{thebibliography}{BHPVdV15}

\bibitem[BHPVdV15]{BHPV}
W.~Barth, K.~Hulek, C.~Peters, and A.~Van~de Ven.
\newblock {\em Compact complex surfaces}, volume~4.
\newblock Springer, 2015.

\bibitem[BT03]{bogomolov}
F.~Bogomolov and Y.~Tschinkel.
\newblock Monodromy of elliptic surfaces.
\newblock In {\em {G}alois groups and fundamental groups}, volume~41 of {\em
  Math. Sci. Res. Inst. Publ.}, pages 167--181. {C}ambridge {U}niv. {P}ress,
  {C}ambridge, 2003.

\bibitem[BV04]{BV}
A.~Beauville and C.~Voisin.
\newblock On the {C}how ring of a {K}$3$ surface.
\newblock {\em J. Alg. Geom.}, 13:417--426, 2004.

\bibitem[CD89]{cossec}
F.~Cossec and I.~Dolgachev.
\newblock {\em {E}nriques surfaces. {I}}, volume~76 of {\em Progress in
  Mathematics}.
\newblock Birkh{\"a}user Boston, Inc., Boston, MA, 1989.

\bibitem[CH07]{cynk}
S.~Cynk and K.~Hulek.
\newblock Higher-dimensional modular {C}alabi-{Y}au manifolds.
\newblock {\em Canad. Math. Bull.}, 50(4):486--503, 2007.

\bibitem[CLY04]{rademacher}
K.S. Chua, M.~L. Lang, and Y.~Yang.
\newblock On {R}ademacher's conjecture: congruence subgroups of genus zero of
  the modular group.
\newblock {\em J. Algebra}, 277(1):408--428, 2004.

\bibitem[CP03]{experimental}
C.~J. Cummins and S.~Pauli.
\newblock Congruence subgroups of {${\rm PSL}(2,{\Bbb Z})$} of genus less than
  or equal to 24.
\newblock {\em Experiment. Math.}, 12(2):243--255, 2003.

\bibitem[FL18]{FlapanLang}
L.~Flapan and J.~Lang.
\newblock Chow motives associated to certain algebraic {H}ecke characters.
\newblock {\em Trans. Amer. Math. Soc. Ser. B}, 5:102--124, 2018.

\bibitem[Klo04]{kloosterman}
R.~Kloosterman.
\newblock Extremal elliptic surfaces and infinitesimal {T}orelli.
\newblock {\em Michigan Math. J.}, 52(1):141--161, 2004.

\bibitem[Kod60]{kodaira2}
K.~Kodaira.
\newblock On compact analytic surfaces.
\newblock In {\em {A}nalytic functions}, pages 121--135. {P}rinceton Univ.
  Press, Princeton, N.J., 1960.

\bibitem[Kod63]{kodaira}
K.~Kodaira.
\newblock On compact analytic surfaces. {II}, {III}.
\newblock {\em Ann. of Math. (2) 77 (1963), 563--626; ibid.}, 78:1--40, 1963.

\bibitem[Kol07]{kollar}
J.~Koll{\'a}r.
\newblock {\em Resolution of singularities}.
\newblock Princeton University Press, 2007.

\bibitem[KT15]{kock}
B.~K{\"o}ck and J.~Tait.
\newblock Faithfulness of actions on {R}iemann-{R}och spaces.
\newblock {\em Canad. J. Math.}, 67(4):848--869, 2015.

\bibitem[Laz04]{positivity}
R.~Lazarsfeld.
\newblock {\em Positivity in algebraic geometry. {I}}.
\newblock Springer-Verlag, Berlin, 2004.

\bibitem[LV17]{LV}
R.~Laterveer and C.~Vial.
\newblock On the {C}how ring of {C}ynk-{H}ulek {C}alabi-{Y}au varieties and
  {S}chreieder varieties.
\newblock \url{https://arxiv.org/abs/1712.03070}, 2017.

\bibitem[Mir89]{miranda}
R.~Miranda.
\newblock {\em The basic theory of elliptic surfaces}.
\newblock Dottorato di Ricerca in Matematica. [Doctorate in Mathematical
  Research]. ETS Editrice, Pisa, 1989.

\bibitem[Mor84]{morrison2}
D.~R. Morrison.
\newblock On {$K3$}\ surfaces with large {P}icard number.
\newblock {\em Invent. Math.}, 75(1):105--121, 1984.

\bibitem[MS84]{morrison}
D.~Morrison and G.~Stevens.
\newblock Terminal quotient singularities in dimensions three and four.
\newblock {\em Proc. Amer. Math. Soc.}, 90(1):15--20, 1984.

\bibitem[Nik79]{nikulin}
V.~V. Nikulin.
\newblock Integer symmetric bilinear forms and some of their geometric
  applications.
\newblock {\em Izv. Akad. Nauk SSSR Ser. Mat.}, 43(1):111--177, 238, 1979.

\bibitem[Nor85]{nori}
M.~Nori.
\newblock On certain elliptic surfaces with maximal {P}icard number.
\newblock {\em Topology}, 24(2):175--186, 1985.

\bibitem[Rei12]{reid}
M.~Reid.
\newblock Surface cyclic quotient singularities and {H}irzebruch-{J}ung
  resolutions.
\newblock 2012.
\newblock \url{http://www. warwick. ac. uk/masda/surf}.

\bibitem[Sar08]{sarti}
A.~Sarti.
\newblock Transcendental lattices of some {$K3$}-surfaces.
\newblock {\em Math. Nachr.}, 281(7):1031--1046, 2008.

\bibitem[Sch15]{schreieder}
S.~Schreieder.
\newblock On the construction problem for {H}odge numbers.
\newblock {\em Geom. Topol.}, 19(1):295--342, 2015.

\bibitem[Shi72]{modsur}
T.~Shioda.
\newblock On elliptic modular surfaces.
\newblock {\em J. Math. Soc. Japan}, 24:20--59, 1972.

\bibitem[SS10]{elsur}
M.~Sch{{\"u}}tt and T.~Shioda.
\newblock Elliptic surfaces.
\newblock In {\em Algebraic Geometry in {E}ast {A}sia---{S}eoul 2008},
  volume~60 of {\em Adv. Stud. Pure Math.}, pages 51--160. Math. Soc. Japan,
  Tokyo, 2010.

\end{thebibliography}
\bibliographystyle{alpha}

\end{document}